\setlist[enumerate,1]{label=\textup{(\arabic*)}}
\newcommand*{\MRref}[2]{ \href{http://www.ams.org/mathscinet-getitem?mr=#1}{MR \textbf{#1}}}
\newcommand*{\arxiv}[1]{\href{http://www.arxiv.org/abs/#1}{arXiv: #1}}
\renewcommand*{\PrintDOI}[1]{\href{http://dx.doi.org/\detokenize{#1}}{doi: \detokenize{#1}}}
\numberwithin{equation}{section}
\theoremstyle{plain}
\newtheorem*{theorem*}{Theorem}
\newtheorem*{cor*}{Corollary}
\newtheorem*{prop*}{Proposition}
\newtheorem{thm}[subsection]{Theorem}
\newtheorem{cor}[subsection]{Corollary}
\newtheorem{lem}[subsection]{Lemma}
\newtheorem{prop}[subsection]{Proposition}
\theoremstyle{definition}
\newtheorem{defn}[subsection]{Definition}
\theoremstyle{remark}
\newtheorem{rem}[subsection]{Remark}
\newcommand{\NN}{\mathbb{N}}
\newcommand{\TT}{\mathbb{T}}
\newcommand{\CC}{\mathbb{C}}
\newcommand{\OO}{\mathcal{O}}
\newcommand*{\nb}{\nobreakdash}
\newcommand*{\Star}{\(^*\)\nobreakdash-}
\newcommand{\Cst}{\mathrm{C}^*}
\newcommand{\Wst}{\mathrm{W}^*}
\newcommand{\wst}{\mathrm{w}^*}
\newcommand*{\Bound}{\mathbb B}
\newcommand{\Hilm}[1][E]{\mathcal{#1}}
\newcommand{\id}{\mathrm{id}}
\newcommand{\op}{\mathrm{op}}
\newcommand{\mbb}{\mathbb{M}_{\mathfrak{t}}(\mathcal{B})}
\newcommand{\exopp}{(\ell^\infty(G)\otimes(A\otimes_{\mathrm{max}}A^{\mathrm{op}}))\rtimes_{\tau\otimes(\alpha\otimes\alpha^{\mathrm{op}})}G} 
\newcommand{\opp}{(A\otimes_{\mathrm{max}}A^{\mathrm{op}})\rtimes_{\alpha\otimes\alpha^{\mathrm{op}}}G} 
\newcommand{\actex}{(\{\ell^\infty(G)\otimes(A_g\otimes_{\mathrm{max}}A_g^{\mathrm{op}})\}_{g\in G},\{\tau_g\otimes(\alpha_g\otimes\alpha_g^{\mathrm{op}})\}_{g\in G})} 
\newcommand{\A}{\mathcal{A}}
\newcommand{\B}{\mathcal{B}}
\newcommand*{\red}{\mathrm r}
\newcommand*{\cstar}{\texorpdfstring{\(\Cst\)\nobreakdash-\hspace{0pt}}{*-}}
\renewcommand{\min}{\mathrm{min}}
\renewcommand{\max}{\mathrm{max}}
\newcommand*{\congto}{\xrightarrow\sim}
\newcommand*{\into}{\hookrightarrow}
\DeclarePairedDelimiterX{\braket}[2]{\langle}{\rangle}{#1\,\delimsize\vert\,\mathopen{}#2}
\DeclarePairedDelimiterX{\BRAKET}[2]{\langle}{\rangle}{\!\delimsize\langle#1\,\delimsize\vert\,\mathopen{}#2\rangle\delimsize\!}
\DeclarePairedDelimiterX{\setgiven}[2]{\{}{\}}{#1\,{:}\,\mathopen{}#2}
\author{Alcides Buss}
\address{Departamento de Matem\'atica, Universidade Federal de Santa Catarina, 88040-900 Florian\'opolis SC, Brazil}
\email{alcides.buss@ufsc.br}
\author{Dami{\'a}n Ferraro}
\address{Departamento de Matem\'atica y Estad\'istica del Litoral, Universidad de la Rep\'ublica, 50000 Salto, Uruguay}
\email{dferraro@unorte.edu.uy}
\author{Camila F. Sehnem}
\address{School of Mathematics and Statistics, Victoria University of Wellington, P.O. Box 600, Wellington 6140, New Zealand.}
\email{camila.sehnem@vuw.ac.nz}
\subjclass[2010]{46L55 (Primary) 46L99, 37B99 (Secondary)}
\keywords{Partial action, nuclearity, approximation property, exact group}
\title{Nuclearity for partial crossed products by exact discrete groups}
\begin{document}

\begin{abstract} We show that the partial crossed product of a commutative $\Cst$\nb-algebra by an exact discrete group is nuclear whenever the full and reduced partial crossed products coincide. This generalises a result by Matsumura in the context of global actions. In general, we prove that a partial action of an exact discrete group on a $\Cst$\nb-algebra~$A$ has Exel's approximation property if and only if the full and reduced partial crossed products by the diagonal partial action on~$A\otimes_{\max} A^\mathrm{op}$ coincide. We apply our results to establish nuclearity in terms of a weak containment property in the case of semigroup $\Cst$\nb-algebras and $\Cst$\nb-algebras associated to separated graphs.
\end{abstract}


\maketitle

\section{Introduction}

Anantharaman-Delaroche introduced in \cite{Anantharaman-Delaroche1987} important notions of amenability and approximation properties for actions of discrete groups on von Neumann algebras and $\Cst$\nb-algebras. Namely, an action $(M,\gamma)$ of a discrete group~$G$ on a von Neumann algebra~$M$ is amenable in the sense of Anantharaman-Delaroche if there exists a conditional expectation $\ell^\infty(G,M)\to M$ that is equivariant with respect to the diagonal action $\tau\otimes\gamma$ of~$G$ on~$\ell^\infty(G,M)$, where $\tau\colon G\to\ell^\infty(G)$ is the left translation action. She gave equivalent characterisations for amenability in~\cite[Th{\'e}or{\`e}me~3.3]{Anantharaman-Delaroche1987}, involving approximation properties and positive type functions. An action on a $\Cst$\nb-algebra~$A$ is then said to be amenable if its unique normal extension to the enveloping von Neumann algebra~$A''$ is amenable. With this definition, she proved that the crossed product of a nuclear $\Cst$\nb-algebra by a discrete group is nuclear if and only if the action is amenable.

Motivated by the work of Anantharaman-Delaroche, Matsumura used the Haagerup standard form of a von Neumann algebra~\cite{Haagerup} to show that an action of an exact discrete group on a unital commutative $\Cst$\nb-algebra is amenable whenever the full and reduced crossed products coincide~\cite{Matsumura}. The nonunital case is due to Buss, Echterhoff and Willett, who also generalised Matsumura's result to cover actions of exact locally compact groups \cite{Buss-Echter-Willett, buss2020amenability}. As a consequence, the crossed product associated to an action $(A,\alpha)$ on a commutative $\Cst$\nb-algebra is nuclear whenever $G$ is exact and $A\rtimes_\alpha G=A\rtimes_{\alpha,\red}G$. This latter property is what is sometimes called \emph{weak containment}. For an action of an exact discrete group on a nuclear $\Cst$\nb-algebra, nuclearity of the associated crossed product is equivalent to the weak containment property of the diagonal action on~$A\otimes A^\mathrm{op}$ (see \cite{Matsumura} and \cite[Theorem~5.8]{Buss-Echter-Willett}).

It is well known that nuclearity of a reduced partial crossed product $A\rtimes_{\alpha,\red}G$ implies that the left regular representation implements an isomorphism $A\rtimes_\alpha G\cong A\rtimes_{\alpha, r} G$ between the full and reduced crossed products. An analogous fact is even true for Fell bundles over discrete groups \cite[Theorem~25.11]{Exel:Partial_dynamical}. Our main purpose in this paper is to address the converse implication. Abadie, Buss and Ferraro have recently introduced in~\cite{2019arXiv190703803A} the concept of a partial action on a von Neumann algebra, in order to study amenability and several notions of approximation properties for partial actions and Fell bundles. They observed that a partial action $\alpha=(\{A_g\}_{g\in G},\{\alpha_g\}_{g\in G})$ on a $\Cst$\nb-algebra always induces a $\Wst$\nb-partial action 
$\alpha''=(\{A''_g\}_{g\in G}, \{\alpha''_g\}_{g\in G})$ on~$A''$ in a canonical way. We use this bidual partial action to represent the partial crossed product~$A\rtimes_\alpha G$ on a Hilbert space so that $A''$ is in standard form, following the ideas from the original work of Matsumura~\cite{Matsumura}, also employed in~\cite{Buss-Echter-Willett, buss2020amenability}. Our main theorem is the following:

\begin{theorem*}  Let $\alpha=(\{A_g\}_{g\in G},\{\alpha_g\}_{g\in G})$ be a partial action of an exact discrete group~$G$ on a commutative $\Cst$\nb-algebra~$A$. Suppose that $A\rtimes_{\alpha}G=A\rtimes_{\alpha,\red}G$. Then the partial crossed product $A\rtimes_{\alpha}G$ is nuclear, or equivalently, the partial action $\alpha$ has Exel's approximation property.
\end{theorem*}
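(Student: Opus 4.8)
The plan is to combine the characterisation established earlier in the paper --- a partial action of an exact discrete group has Exel's approximation property exactly when the full and reduced partial crossed products of the diagonal partial action $\actop$ on $A\otimes_{\max}A^{\mathrm{op}}$ coincide --- with a standard-form argument in the spirit of Matsumura. Since $A$ is commutative, $A^{\mathrm{op}}=A$ and, commutative $\Cst$\nb-algebras being nuclear, $A\otimes_{\max}A^{\mathrm{op}}=A\otimes A$ is again commutative; writing $A\cong\Cont_0(X)$ one has $A\otimes A\cong\Cont_0(X\times X)$ and $\alpha\otimes\alpha^{\mathrm{op}}$ is the ordinary diagonal partial action. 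So it suffices to prove that $A\rtimes_\alpha G=A\rtimes_{\alpha,\red}G$ forces $\opp=(A\otimes_{\max}A^{\mathrm{op}})\rtimes_{\alpha\otimes\alpha^{\mathrm{op}},\red}G$, i.e.\ weak containment for the diagonal partial action. Granting this, nuclearity of $A\rtimes_\alpha G$ follows from nuclearity of $A$, while the converse equivalence ``nuclear $\Rightarrow$ Exel's approximation property'' is also contained in the characterisation: once $A\rtimes_{\alpha,\red}G$ is nuclear, a standard argument using nuclearity upgrades this to weak containment of $\actop$.

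For the main implication I would follow Matsumura by passing to the bidual $\Wst$\nb-partial action $\alpha''=(\{A''_g\}_{g\in G},\{\alpha''_g\}_{g\in G})$ on $A''\cong L^\infty(X,\mu)$. Using the hypothesis $A\rtimes_\alpha G=A\rtimes_{\alpha,\red}G$, represent the partial crossed product --- via the regular representation of its associated Fell bundle --- on a Hilbert space $\Hils[H]$ on which $A''$ is in standard form, with modular conjugation $J$. Since $A''$ is abelian, $JA''J$ is a commuting copy of $(A'')^{\mathrm{op}}\cong A''$, and together with the partial isometries $u_g$ of the Fell bundle and their $J$\nb-conjugates (which normalise the commutant) this assembles a covariant representation for the diagonal partial action $\actop$ on $A\otimes_{\max}A^{\mathrm{op}}$ --- the self-duality available only in the commutative case. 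Throughout, one must carry along the domain and range projections $1_g=\alpha''_g(1_{g^{-1}})\in A''$ of the $\Wst$\nb-partial action.

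Exactness of $G$ then enters through the $\ell^\infty(G)$\nb-augmented partial crossed product $\exopp$ and its partial action $\actex$. In the guise of the coincidence of the full and reduced crossed products of $\ell^\infty(G)$ by translation --- equivalently, amenability of the $G$\nb-action on $\beta G$ --- it is used to turn the covariant representation just constructed, after inflating by $\ell^\infty(G)$, into the approximation data witnessing that $\actop$ has weak containment: a net of finitely supported, asymptotically invariant positive-type sections, whose positivity comes from $J$ and the cyclic vector of the standard form, and whose finite support and near-invariance come from exactness. By the characterisation this yields Exel's approximation property for $\alpha$, hence nuclearity of $A\rtimes_\alpha G$.

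The principal obstacle is the bookkeeping forced by partiality. There are no unitaries implementing $\alpha$, only the partial isometries of the associated Fell bundle, so each of Matsumura's manipulations has to be redone fibrewise, subject to the constraint that the elements involved lie in the ideals $A_g$ (resp.\ the corners $1_gA''$); in particular the approximating sections produced from exactness must be truncated so that each term sits in the correct fibre, and the positivity and asymptotic-invariance estimates must be re-derived for the partial crossed product rather than for a crossed product by a group of automorphisms. A secondary point, which nonetheless has to be checked carefully, is that the regular representation of the partial crossed product really does place $A''$ in standard form and that $\alpha''$ is spatially implemented on $\Hils[H]$ compatibly with $J$ --- this compatibility is exactly what licenses the conjugation argument.
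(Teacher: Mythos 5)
Your reduction to Theorem~\ref{thm:noncommutative} ((iii)$\Rightarrow$(i)) hinges on proving that weak containment for $\alpha$ alone forces weak containment for the diagonal partial action $\alpha\otimes\alpha^{\mathrm{op}}$ on $A\otimes_{\max}A^{\mathrm{op}}$, and this is exactly where the proposal has no argument. Exhibiting one covariant representation of the diagonal action on the standard-form Hilbert space (via $\iota$ and $J\iota(\cdot)^*J$) proves nothing about weak containment: weak containment is a norm estimate over \emph{all} representations of the full diagonal crossed product, and a single covariant pair cannot deliver it. The alternative reading of your sketch --- that exactness plus the standard form directly produce a net of finitely supported, asymptotically invariant positive-type sections --- is not ``weak containment of $\alpha\otimes\alpha^{\mathrm{op}}$'' but precisely Exel's approximation property for $\alpha''$, i.e.\ the conclusion of the theorem; no mechanism for producing such a net is given (``finite support and near-invariance come from exactness'' is the entire difficulty), and the ``cyclic vector of the standard form'' you invoke for positivity need not exist when $A''$ is not $\sigma$-finite, since the Haagerup standard form only provides a self-dual cone. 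As it stands the plan is circular: the only available route from weak containment of $\alpha$ to weak containment of $\alpha\otimes\alpha^{\mathrm{op}}$ passes through the approximation property itself, so the detour through $A\otimes_{\max}A^{\mathrm{op}}$ buys nothing in the commutative case.

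For comparison, the paper never touches the diagonal action here. It uses the hypothesis $A\rtimes_\alpha G=A\rtimes_{\alpha,\red}G$ to see that the standard-form representation $\iota\times v$ of $A\rtimes_\alpha G$ is weakly contained in a regular-type representation, applies Arveson's extension theorem to get a ccp map on $(\ell^\infty(G)\otimes A)\rtimes_{\tau\otimes\alpha}G$, uses multiplicative-domain arguments together with maximal abelianness of $\iota(A'')$ in its standard form (this is where commutativity and $J$ actually enter) to show the map is grading-preserving with values in $\iota(A_g'')v_g$, extends it normally via a minimal Stinespring dilation, and then invokes Proposition~\ref{prop:ccp-reduced} to descend to a conditional expectation $(\ell^\infty(G)\otimes A'')\rtimes_{\tau\otimes\alpha'',\red}G\to A''\rtimes_{\alpha'',\red}G$ (Lemma~\ref{lem:bidual-equiv-cp}). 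Exactness enters only through Corollary~\ref{cor:exact-nuclearity-diag}, giving nuclearity of the domain, hence of $A''\rtimes_{\alpha'',\red}G$ (Corollary~\ref{cor:bidual-commut}); the approximation property for $\alpha$ and nuclearity of $A\rtimes_\alpha G$ then follow from the cited results of Abadie--Buss--Ferraro and Buss--Echterhoff--Willett, or from the $\otimes_{\max}$ embedding argument. If you want to salvage your outline, the missing content is precisely this equivariant ccp/conditional-expectation construction (or an equivalent construction of the approximation-property net), not the passage through $A\otimes_{\max}A^{\mathrm{op}}$.
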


In order to adapt the ideas of Matsumura to the context of partial actions, we prove a technical result concerning completely positive maps between graded $\Cst$\nb-algebras. Roughly speaking, Proposition~\ref{prop:ccp-reduced} tells us that a grading-preserving completely positive map from the full cross-sectional $\Cst$\nb-algebra of a Fell bundle over~$G$ into a $G$-graded $\Cst$\nb-algebra induces a completely positive  map between the \emph{reduced} cross-sectional $\Cst$\nb-algebras of the underlying bundles. This becomes a crucial tool in our proofs. The corresponding result holds for the \emph{full} cross-sectional $\Cst$\nb-algebras as well, but our proof involves a theory of completely positive maps between Fell bundles and a Stinespring's Dilation Theorem in this setting. We have chosen to include these results in the appendix, as they might be of independent interest.

We leave open the question whether the weak containment property for a Fell bundle~$\Hilm[B]=(B_g)_{g\in G}$ over an exact discrete group with commutative unit fibre algebra implies nuclearity of $\Cst_\red(\Hilm[B])$. A recent example by Buss, Echterhoff and
Willett shows that this is not true in general for nondiscrete, exact locally compact groups (see \cite[Proposition~5.26]{buss2020amenability} and \cite[Example~5.27]{buss2020amenability}). Their example consists of an exact, nonamenable locally compact group~$G$ and a $2$\nb-cocycle $w\colon G\times G\to \TT$ so that the full and reduced twisted group algebras coincide. Proposition 5.29 and Lemma~5.30 of \cite{buss2020amenability} suggest that constructing a similar example with twisted group algebras in the discrete case might be a difficult task. 

In the context of partial actions of exact discrete groups on not necessarily commutative $\Cst$\nb-algebras, we obtain a characterisation of the approximation property of a partial action in terms of the weak containment property of the diagonal partial action on~$A\otimes_\max A^{\mathrm{op}}$, in a similar fashion as in \cite{Matsumura} and  \cite{Buss-Echter-Willett, buss2020amenability}. To do this, we apply recent results on approximation properties and amenability for Fell bundles and partial actions due to Abadie, Buss and Ferraro~\cite{2019arXiv190703803A}. By \cite[Corollary~4.23]{buss2020amenability} (see also \cite[Proposition~7.2]{2019arXiv190703803A}), the reduced cross-sectional $\Cst$\nb-algebra $\Cst_\red(\Hilm[B])$ of a Fell bundle is nuclear if and only if its unit fibre \cstar{}algebra $B_e$ is nuclear and $\Hilm[B]$ has the approximation property. So in order to conclude that  a partial action~$\alpha$ has the approximation property,  by Corollary~6.10 and Theorem~6.12 of~\cite{2019arXiv190703803A} it suffices to show that the partial crossed product of the center of $A''$ by the bidual partial action is nuclear. We prove this in Corollary~\ref{cor:Z-weak-containment} assuming that $G$ is exact and the diagonal partial action satisfies the weak containment property. We then obtain our second main result.

\begin{theorem*} Let~$G$ be an exact discrete group and let $\alpha=(\{A_g\}_{g\in G},\{\alpha_g\}_{g\in G})$ be a partial action of~$G$ on a $\Cst$\nb-algebra~$A$. Then the following are equivalent:
\begin{enumerate}
\item[\rm{(i)}] $A\rtimes_{\alpha,\red}G$ has Exel's approximation property;

\item[\rm{(ii)}] $(A\otimes_\max B)\rtimes_{\alpha\otimes\beta}G=(A\otimes_\max B)\rtimes_{\alpha\otimes\beta, \red}G$ for every partial action $(\{B_g\}_{g\in G},\allowbreak\{\beta_g\}_{g\in G})$ of~$G$ on a $\Cst$\nb-algebra~$B$;

\item[\rm{(iii)}] $(A\otimes_\max A^{\mathrm{op}})\rtimes_{\alpha\otimes\alpha^{\mathrm{op}}}G=(A\otimes_\max A^{\mathrm{op}})\rtimes_{\alpha\otimes\alpha^{\mathrm{op}}, \red}G$.
\end{enumerate}
\end{theorem*}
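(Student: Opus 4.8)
The plan is to prove the three equivalences by establishing $(i)\Rightarrow(ii)\Rightarrow(iii)\Rightarrow(i)$, using the characterisations of the approximation property for Fell bundles and partial actions from~\cite{2019arXiv190703803A} together with the first main theorem of the paper and the technical tool Proposition~\ref{prop:ccp-reduced}.

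\medskip

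\textbf{From (i) to (ii).} Assume $A\rtimes_{\alpha,\red}G$ has Exel's approximation property. The approximation property is inherited by tensoring with an auxiliary partial action: given any partial action $(\{B_g\},\{\beta_g\})$ of~$G$ on~$B$, the diagonal partial action $\alpha\otimes\beta$ on $A\otimes_{\max}B$ has the approximation property as well, since the approximating net of finitely supported functions witnessing the approximation property for~$\alpha$ (valued in multipliers of the appropriate ideals) can be tensored with the identity on the $B$-side; this is the analogue, for partial actions, of the standard fact that Anantharaman-Delaroche's approximation property for $(A,\alpha)$ passes to $(A\otimes_{\max}B,\alpha\otimes\beta)$. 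By \cite[Corollary~4.23]{buss2020amenability} (equivalently \cite[Proposition~7.2]{2019arXiv190703803A}), a Fell bundle with the approximation property has $\Cst(\Hilm[B])=\Cst_\red(\Hilm[B])$; applying this to the semidirect-product Fell bundle of $\alpha\otimes\beta$ gives $(A\otimes_\max B)\rtimes_{\alpha\otimes\beta}G=(A\otimes_\max B)\rtimes_{\alpha\otimes\beta,\red}G$, which is~(ii). The implication $(ii)\Rightarrow(iii)$ is immediate by specialising $B=A^{\mathrm{op}}$ and $\beta=\alpha^{\mathrm{op}}$.

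\medskip

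\textbf{From (iii) to (i).} This is the substantial direction and mirrors the Matsumura--Buss--Echterhoff--Willett strategy as adapted in the first main theorem. Assume the weak containment property for the diagonal partial action on $A\otimes_\max A^{\mathrm{op}}$. By \cite[Corollary~4.23]{buss2020amenability} again, showing that $A\rtimes_{\alpha,\red}G$ has the approximation property is equivalent to showing that $\Cst_\red$ of the associated Fell bundle has nuclear unit fibre and the approximation property; but $A$ need not be nuclear, so one must instead invoke Corollary~6.10 and Theorem~6.12 of~\cite{2019arXiv190703803A}, which reduce the approximation property of~$\alpha$ to nuclearity of the partial crossed product $Z(A'')\rtimes_{\alpha''}G$ of the centre of the bidual by the bidual partial action. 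The centre $Z(A'')$ is a commutative von Neumann algebra, so this is exactly the commutative situation handled by the first main theorem, provided we can promote the weak containment hypothesis on $A\otimes_\max A^{\mathrm{op}}$ to weak containment for the relevant commutative partial action. This is precisely the content of Corollary~\ref{cor:Z-weak-containment}: under exactness of~$G$, the hypothesis $(A\otimes_\max A^{\mathrm{op}})\rtimes_{\alpha\otimes\alpha^{\mathrm{op}}}G=(A\otimes_\max A^{\mathrm{op}})\rtimes_{\alpha\otimes\alpha^{\mathrm{op}},\red}G$ forces weak containment for the $\Wst$-partial action on (a suitable central subalgebra of) $A''\otimes A''^{\mathrm{op}}$, and then the commutative main theorem gives nuclearity of $Z(A'')\rtimes_{\alpha''}G$. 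Feeding this back through \cite[Corollary~6.10, Theorem~6.12]{2019arXiv190703803A} yields that $\alpha$ has the approximation property, hence so does the Fell bundle, hence $A\rtimes_{\alpha,\red}G$ has Exel's approximation property. The passage from full/reduced coincidence to the $\Wst$-level weak containment, carried out via the bidual partial action and the standard-form representation, together with the application of Proposition~\ref{prop:ccp-reduced} to push grading-preserving completely positive maps down to the reduced cross-sectional algebras, is where the real work lies; the rest is assembling known equivalences.

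\medskip

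I expect the main obstacle to be the implication $(iii)\Rightarrow(i)$, specifically the step isolated in Corollary~\ref{cor:Z-weak-containment}: one must transfer the weak containment property from the $\Cst$-algebraic level $A\otimes_\max A^{\mathrm{op}}$ to the von Neumann algebraic, commutative level $Z(A'')$, and this requires both the exactness of~$G$ and the technical completely positive map result Proposition~\ref{prop:ccp-reduced} to control the interaction between the full and reduced cross-sectional algebras under the conditional-expectation-type maps induced by the standard form. Once that corollary is in hand, the reduction to the already-proven commutative main theorem and the machinery of~\cite{2019arXiv190703803A} closes the argument.
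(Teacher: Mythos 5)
Your proposal follows essentially the same route as the paper: (i)$\Rightarrow$(ii) by tensoring the approximating net for $\alpha$ with an approximate identity of $B$ (the paper's Proposition~\ref{prop:nuclearity-diag-tensor} and Corollary~\ref{cor:nuclearity-diag}), (ii)$\Rightarrow$(iii) trivially, and (iii)$\Rightarrow$(i) via Corollary~\ref{cor:Z-weak-containment} followed by Corollary~6.10 and Theorem~6.12 of \cite{2019arXiv190703803A}, exactly as in the paper. The only (harmless, since it concerns a cited result rather than the theorem's proof) inaccuracy is your description of Corollary~\ref{cor:Z-weak-containment}: it does not first promote the hypothesis to weak containment for a central commutative subalgebra and then invoke Theorem~\ref{thm:maintheorem}, but rather obtains nuclearity of $\mathrm{Z}(A'')\rtimes_{\alpha'',\red}G$ directly from the conditional expectation supplied by Lemma~\ref{lem:bidual-ext} (built with Proposition~\ref{prop:ccp-reduced}) together with exactness via Corollary~\ref{cor:exact-nuclearity-diag}.
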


In particular, nuclearity for the reduced partial crossed product of a nuclear $\Cst$\nb-algebra~$A$ by an exact discrete group corresponds to the weak containment property for the diagonal partial action on~$A\otimes A^\mathrm{op}$. An analogue of the above result in the setting of global actions follows from Theorem~5.16 and Corollary~5.17 of~\cite{buss2020amenability}.

The first application of our main theorem concerns semigroup $\Cst$\nb-algebras. A natural choice of a concrete $\Cst$\nb-algebra~$\Cst_{\lambda}(P)$ to attach to a left-cancellative semigroup~$P$ is the $\Cst$\nb-algebra generated by the canonical representation of~$P$ by isometries on~$\ell^2(P)$. Nica introduced a universal $\Cst$\nb-algebra associated to the positive cone of a quasi-lattice order in~\cite{Nica:Wiener--hopf_operators}. Li constructed full semigroup $\Cst$\nb-algebras out of an arbitrary left-cancellative semigroup~$P$ with the help of the family of constructible right ideals of~$P$~\cite{Li:Semigroup_amenability}. In this paper we consider the semigroup $\Cst$\nb-algebra~$\Cst_s(P)$ of a submonoid of a group. Nuclearity of the semigroup $\Cst$\nb-algebra~$\Cst_{\lambda}(P)$ implies $\Cst_s(P)\cong\Cst_{\lambda}(P)$ via the left regular representation, at least when~$P$ satisfies independence \cite[Theorem~5.6.44]{CLEY}. We obtain the converse implication if~$P$ embeds into an exact discrete group~$G$, using a realisation of~$\Cst_{\lambda}(P)$ as a partial crossed product of a commutative $\Cst$\nb-algebra by~$G$ due to Li~\cite[Theorem~5.6.41]{CLEY}. Precisely, we have from Theorem~\ref{thm:maintheorem}:

\begin{cor*} Let $P$ be a monoid that embeds into an exact discrete group~$G$. Suppose that the left regular representation $\lambda^+\colon \Cst_s(P)\to\Cst_{\lambda}(P)$ is an isomorphism. Then $\Cst_{\lambda}(P)$ is nuclear. 
\end{cor*}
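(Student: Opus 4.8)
The plan is to recast both semigroup $\Cst$-algebras of $P$ as partial crossed products of one and the same commutative $\Cst$-algebra by $G$, and then to appeal to Theorem~\ref{thm:maintheorem}. Identifying $P$ with its image, we may assume $P$ is a submonoid of the exact discrete group $G$. By Li's crossed product description of semigroup $\Cst$-algebras of submonoids of groups \cite[Theorem~5.6.41]{CLEY} there is a commutative $\Cst$-algebra $D_\lambda(P)$ --- the closed span in $\Cst_\lambda(P)$ of the range projections of the canonical isometries $V_p$, $p\in P$ --- carrying a partial action $\alpha$ of $G$ for which the left regular representation induces an isomorphism $\Cst_\lambda(P)\cong D_\lambda(P)\rtimes_{\alpha,\red}G$; the same circle of ideas realises the full semigroup $\Cst$-algebra as a full partial crossed product $\Cst_s(P)\cong D_s(P)\rtimes_\alpha G$ of its own (commutative) diagonal subalgebra $D_s(P)$, in a way that intertwines $\lambda^+$ with the composition
\[
D_s(P)\rtimes_\alpha G \xrightarrow{\;q\rtimes\id\;} D_\lambda(P)\rtimes_\alpha G \longrightarrow D_\lambda(P)\rtimes_{\alpha,\red}G,
\]
where $q\colon D_s(P)\onto D_\lambda(P)$ is the canonical surjection of diagonals and the second arrow is the regular representation.

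First I would use injectivity of $\lambda^+$ to collapse the two diagonals. Since $\lambda^+$ is injective, so is its restriction to $D_s(P)$; but that restriction is precisely $q$, and $q$ is also surjective, so $q$ is an isomorphism. We may therefore take $D_s(P)=D_\lambda(P)=:D$, a commutative $\Cst$-algebra equipped with a partial action $\alpha$ of the exact discrete group $G$, and under this identification $\lambda^+$ becomes nothing but the regular representation $D\rtimes_\alpha G\to D\rtimes_{\alpha,\red}G$. (In the language of \cite{CLEY} we have just argued that $P$ satisfies independence, but the argument does not need this terminology.)

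Next, the assumption that $\lambda^+$ is an isomorphism says exactly that the weak containment property $D\rtimes_\alpha G=D\rtimes_{\alpha,\red}G$ holds. Since $G$ is exact and $D$ is commutative, Theorem~\ref{thm:maintheorem} applies and gives that $D\rtimes_\alpha G$ is nuclear. As $\Cst_\lambda(P)\cong D\rtimes_{\alpha,\red}G=D\rtimes_\alpha G$, we conclude that $\Cst_\lambda(P)$ is nuclear, which is what we wanted.

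The step I expect to be the main obstacle is the set-up in the first paragraph: one must extract from \cite{CLEY} the full and reduced partial crossed product pictures at the same time and check that the left regular representation $\lambda^+\colon\Cst_s(P)\to\Cst_\lambda(P)$ corresponds to first passing to the quotient $D_s(P)\onto D_\lambda(P)$ of diagonals and then regularising. Once this compatibility is in place, the rest is a direct appeal to Theorem~\ref{thm:maintheorem}, whose hypotheses --- $G$ exact and $D$ commutative --- are met on the nose.
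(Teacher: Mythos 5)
Your argument is correct and is essentially the paper's own proof: independence of $P$ is deduced from injectivity of $\lambda^+$ (the paper cites \cite[Corollary~2.27]{Li:Semigroup_amenability}, you rederive it via the diagonal quotient $q$), after which Theorem~\ref{thm:partial-picture} identifies $\lambda^+$ with the regular representation $D_r\rtimes_{\gamma}G\to D_r\rtimes_{\gamma,\red}G$ and Theorem~\ref{thm:maintheorem} yields nuclearity. The only cosmetic difference is your appeal to the independence-free description $\Cst_s(P)\cong D_s(P)\rtimes G$ (available in \cite{CLEY} but not stated in this paper), which is more than is needed: once independence is in hand, the paper's Theorem~\ref{thm:partial-picture} already provides both crossed-product pictures compatibly with $\lambda^+$.
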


With the above application, we give a positive answer to a question posed by Anantharaman-Delaroche~\cite[Remark~4.17]{anantharamandelaroche2016remarks} and add to the existing results in the literature on nuclearity of semigroup $\Cst$\nb-algebras (see \cite[Theorem~5.6.41]{CLEY} and also \cite{ABCD, LOS, LI2013626}). We also include a second application of Theorem~\ref{thm:maintheorem} in Corollary~\ref{cor:separated-graph} concerning $\Cst$\nb-algebras associated to separated graphs.

\section{Partial actions and some constructions}

In this section,  we recall some constructions related to partial actions of discrete groups on $\Cst$\nb-algebras. Exel introduced the partial crossed product of a $\Cst$\nb-algebra by the integers in \cite{Exel:Circle_actions}. The construction of a partial crossed product by an arbitrary discrete group is due to McClanahan \cite{Mcclanahan1995KTheoryFP}. Abadie, Buss and Ferraro introduced and used partial actions in the $\Wst$\nb-context in~\cite{2019arXiv190703803A} to study approximation properties and amenability of $\Cst$\nb-algebraic partial actions and Fell bundles.

\subsection{Partial actions in the \texorpdfstring{$\mathrm{C}^*$}{C*}-context}

\begin{defn}[\cite{Exel:Partial_dynamical}*{Definition~11.4}]\label{defn: partial-action} 
A \emph{partial action} of a discrete group~$G$ on a $\Cst$\nb-algebra~$A$ is a pair $\alpha=(\{A_g\}_{g\in G},\{\alpha_g\}_{g\in G})$, where  $\{A_g\}_{g\in G}$ is a family of closed two-sided ideals of~$A$ and $\alpha_g\colon A_{g^{-1}}\to A_g$ is a \Star isomorphism for each~$g\in G$, such that for all~$g,h\in G$

\begin{enumerate}
\item[\rm{(i)}] $A_e=A$ and $\alpha_e$ is the identity on~$A$ ($e$ being the unit element of~$G$);

\item[\rm{(ii)}] $\alpha_g(A_{g^{-1}}\cap A_h)\subseteq A_{gh}$;

\item[\rm{(iii)}] $\alpha_g\circ \alpha_h=\alpha_{gh}$ on~$A_{h^{-1}}\cap A_{(gh)^{-1}}$.
\end{enumerate}
\end{defn}

We will briefly recall the construction of a partial crossed product using cross-sectional $\Cst$\nb-algebras of Fell bundles, see \cite[Proposition~16.28]{Exel:Partial_dynamical}. The precise definition of a Fell bundle over a discrete group can be found in \cite[Definition~16.1]{Exel:Partial_dynamical}; the cross-sectional $\Cst$\nb-algebra of a Fell bundle is defined in \cite[Definition~16.25]{Exel:Partial_dynamical}, while its reduced counterpart is defined in \cite[Definition~17.6]{Exel:Partial_dynamical}. Since we will be working with Fell bundles over discrete groups, we will view a Fell bundle as a subset of both its full and reduced cross-sectional $\Cst$\nb-algebras so that the Banach space structure of the fibres and the multiplication and involution operations of the bundle are the ones inherited from these $\Cst$\nb-algebras.

Given a partial action $\alpha=(\{A_g\}_{g\in G},\{\alpha_g\}_{g\in G})$, we build a Fell bundle $\Hilm[B]_\alpha=(B_{\alpha_g})_{g\in G}$ over~$G$ as follows. We set $B_{\alpha_g}\coloneqq A_g$ as a complex Banach space. For $a\in A_g$, we write~$a\delta_g$ for the corresponding element of~$B_{\alpha_g}$. The multiplication map is given by $$B_{\alpha_g}\times B_{\alpha_h}\to B_{\alpha_{gh}},\ (a\delta_g,b\delta_h)\mapsto (a\delta_g)\cdot (b\delta_h)\coloneqq \alpha_g(\alpha_{g^{-1}}(a)b)\delta_{gh},$$ for all~$g,h\in G$. This is well defined by item (ii) of Definition~\ref{defn: partial-action}. After spending a little effort, one can prove that the resulting multiplication operation on~$\Hilm[B]_{\alpha}$ is associative. For each $g\in G$, the involution $^*\colon B_{\alpha_g}\to B_{\alpha_{g^{-1}}}$ is given by $$(a\delta_g)^*\coloneqq \alpha_{g^{-1}}(a^*)\delta_{g^{-1}}, \qquad a\in A_g.$$ With these operations, 
$\Hilm[B]_\alpha=(B_{\alpha_g})_{g\in G}$ is a Fell bundle whose unit fibre algebra is~$A$. We refer to \cite[Proposition~16.6]{Exel:Partial_dynamical} for further details.

\begin{defn}[see \cite{Exel:Partial_dynamical}*{Proposition~16.28 and Definition~17.10}] The Fell bundle $\Hilm[B]_{\alpha}=(B_{\alpha_g})_{g\in G}$ constructed above is called the \emph{semidirect product bundle} of~$\alpha=(\{A_g\}_{g\in G},\{\alpha_g\}_{g\in G})$. The \emph{partial crossed product} of $A$ by $G$ under $\alpha=(\{A_g\}_{g\in G},\{\alpha_g\}_{g\in G})$, denoted by $A\rtimes_{\alpha}G$, is the (full) cross-sectional $\Cst$\nb-algebra of~$\Hilm[B]_{\alpha}=(B_{\alpha_g})_{g\in G}$. The \emph{reduced partial crossed product} $A\rtimes_{\alpha, \red}G$ is then defined to be the reduced cross-sectional $\Cst$\nb-algebra of~$\Hilm[B]_{\alpha}$.
\end{defn}

We will simply write $A\rtimes_{\alpha}G=A\rtimes_{\alpha, \red}G$ to say that the full and reduced partial crossed products are isomorphic via the left regular representation $\Lambda\colon\Cst(\Hilm[B]_{\alpha})\to \Cst_{\red}(\Hilm[B]_{\alpha})$.

\subsection{Covariant representations} Nondegenerate representations of the partial crossed product $A\rtimes_\alpha G$ on a Hilbert space $\Hilm[H]$ are parametrized by certain representations of $(\{A_g\}_{g\in G}, \{\alpha_g\}_{g\in G})$ on~$\Hilm[H]$. Recall that a \Star partial representation of $G$ on a unital $\Cst$\nb-algebra~$B$ is a map $v\colon G\to B$ from $G$ to the set of partial isometries in~$B$ with~$v_e=1$ and such that the set of partial isometries $\{v_g\mid g\in G\}$ satisfies the relations $$v_{g}^*=v_{g^{-1}}\qquad\text{ and }\qquad v_{g}v_hv_{h^{-1}}=v_{gh}v_{h^{-1}},$$ for all~$g, h\in G$.

\begin{defn} Given a Hilbert space $\Hilm[H]$ and a partial action $\alpha=(\{A_g\}_{g\in G}, \{\alpha_g\}_{g\in G})$ of~$G$ on a $\Cst$\nb-algebra~$A$, a \emph{covariant representation} of $\alpha$ in~$\Bound(\Hilm[H])$ is a pair $(\pi, v)$, where $v\colon G\to \Bound(\Hilm[H])$ is a \Star partial representation and $\pi\colon A\to \Bound(\Hilm[H])$ is a \Star homomorphism, such that for all $g\in G$ and $a\in A_{g^{-1}}$, $$v_g\pi(a)v_{g^{-1}}=\pi(\alpha_g(a)).$$
\end{defn}

\begin{prop}[\cite{Exel:Partial_dynamical}*{Proposition 13.1}] Let $(\pi, v)$ be a covariant representation of $\alpha=(\{A_g\}_{g\in G}, \{\alpha_g\}_{g\in G})$ in~$\Bound(\Hilm[H])$. Then there is a unique \Star representation $\pi\times v\colon A\rtimes_{\alpha}G\to \Bound(\Hilm[H])$ such that for all $g\in G$ and $a\in A_g$, $$(\pi\times v)(a\delta_g)=\pi(a)v_g.$$
\end{prop}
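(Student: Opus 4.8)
The plan is to build a representation of the semidirect product bundle $\Hilm[B]_{\alpha}=(B_{\alpha_g})_{g\in G}$ and then invoke the universal property of its full cross-sectional $\Cst$\nb-algebra $A\rtimes_{\alpha}G=\Cst(\Hilm[B]_{\alpha})$. Recall that a representation of a Fell bundle $\Hilm[B]=(B_g)_{g\in G}$ in $\Bound(\Hilm[H])$ is a map on $\bigsqcup_{g\in G}B_g$ that is linear on each fibre, multiplicative and \Star preserving (contractivity on each fibre then being automatic), and that every such map extends uniquely to a \Star representation of $\Cst(\Hilm[B])$ because the finitely supported sections form a dense \Star subalgebra. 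It therefore suffices to check that the fibrewise map $\rho$ given by $\rho(a\delta_g)\defeq\pi(a)v_g$ (for $g\in G$ and $a\in A_g=B_{\alpha_g}$) is a representation of $\Hilm[B]_{\alpha}$; its extension is then the sought $\pi\times v$, and uniqueness of a \Star representation $\mu$ with $\mu(a\delta_g)=\pi(a)v_g$ is automatic, since the elements $a\delta_g$ span a dense subspace of $A\rtimes_{\alpha}G$.

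Linearity of $\rho$ on each fibre $B_{\alpha_g}=A_g$ is clear, so the content is the multiplicativity and \Star preservation of $\rho$. For these I would first derive some standard consequences of the covariance relation $v_g\pi(a)v_{g^{-1}}=\pi(\alpha_g(a))$ for $a\in A_{g^{-1}}$ and of the \Star partial representation identities $v_g^*=v_{g^{-1}}$ and $v_gv_hv_{h^{-1}}=v_{gh}v_{h^{-1}}$. Setting $q_g\defeq v_gv_{g^{-1}}$, the partial representation identities imply that each $q_g$ is a projection, that $v_gv_{g^{-1}}v_g=v_g$, and that $v_gv_h=v_{gh}q_{h^{-1}}$. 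Combining these with covariance, for every $a\in A_g$ one obtains
\[
\pi(a)=v_g\pi(\alpha_{g^{-1}}(a))v_{g^{-1}},\qquad q_g\pi(a)=\pi(a)q_g=\pi(a),\qquad \pi(a)v_g=v_g\pi(\alpha_{g^{-1}}(a)),
\]
the first being covariance read backwards, the middle two following from the first together with $v_gv_{g^{-1}}v_g=v_g$, and the last then holding because $\alpha_{g^{-1}}(a)\in A_{g^{-1}}$ gives $\pi(\alpha_{g^{-1}}(a))q_{g^{-1}}=\pi(\alpha_{g^{-1}}(a))$; equivalently, $v_g\pi(c)=\pi(\alpha_g(c))v_g$ for all $c\in A_{g^{-1}}$. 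The \Star preservation of $\rho$ then amounts, for $a\in A_g$, to the identity $\pi(\alpha_{g^{-1}}(a^*))v_{g^{-1}}=v_{g^{-1}}\pi(a^*)$, which is this last relation with $g$ replaced by $g^{-1}$ and $c$ by $a^*\in A_g$.

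For multiplicativity, fix $a\in A_g$ and $b\in A_h$ and put $c\defeq\alpha_{g^{-1}}(a)b$; since $A_{g^{-1}}$ and $A_h$ are ideals of~$A$, we have $c\in A_{g^{-1}}\cap A_h$. Using the displayed identities and multiplicativity of $\pi$,
\[
\rho(a\delta_g)\rho(b\delta_h)=\pi(a)v_g\pi(b)v_h=v_g\pi(c)v_h=v_gv_h\pi(\alpha_{h^{-1}}(c))=v_{gh}\pi(\alpha_{h^{-1}}(c)),
\]
where the third equality uses $\pi(c)v_h=v_h\pi(\alpha_{h^{-1}}(c))$ (valid since $c\in A_h$) and the fourth combines $v_gv_h=v_{gh}q_{h^{-1}}$ with $q_{h^{-1}}\pi(\alpha_{h^{-1}}(c))=\pi(\alpha_{h^{-1}}(c))$ (valid since $\alpha_{h^{-1}}(c)\in A_{h^{-1}}$). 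By item~(ii) of Definition~\ref{defn: partial-action} applied to $h^{-1}$ and $g^{-1}$ we get $\alpha_{h^{-1}}(A_h\cap A_{g^{-1}})\subseteq A_{(gh)^{-1}}$, so $\alpha_{h^{-1}}(c)\in A_{(gh)^{-1}}$; hence the relation $v_g\pi(c')=\pi(\alpha_g(c'))v_g$ (for $c'\in A_{g^{-1}}$), applied with $g$ replaced by $gh$, gives $v_{gh}\pi(\alpha_{h^{-1}}(c))=\pi(\alpha_{gh}(\alpha_{h^{-1}}(c)))v_{gh}$. Finally, item~(iii) of Definition~\ref{defn: partial-action} applied to $gh$ and $h^{-1}$, together with $c\in A_h\cap A_{g^{-1}}$, yields $\alpha_{gh}(\alpha_{h^{-1}}(c))=\alpha_g(c)=\alpha_g(\alpha_{g^{-1}}(a)b)$, so that $\rho(a\delta_g)\rho(b\delta_h)=\pi(\alpha_g(\alpha_{g^{-1}}(a)b))v_{gh}=\rho\bigl((a\delta_g)\cdot(b\delta_h)\bigr)$, as required.

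I expect the only genuine obstacle to be the bookkeeping in the multiplicativity computation: at each step one must check that the element at hand lies in exactly the ideal needed for the relevant projection $v_gv_{g^{-1}}$ to be absorbed and for the composition law in item~(iii) of Definition~\ref{defn: partial-action} to be applicable on the correct intersection of ideals. Nothing here is conceptually deep, but the indices have to be tracked with care.
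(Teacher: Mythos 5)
Your proof is correct, and it is essentially the argument the paper relies on: the paper does not prove this statement but cites Exel's Proposition 13.1, and since here $A\rtimes_{\alpha}G$ is by definition $\Cst(\Hilm[B]_{\alpha})$, the standard route is exactly yours — check that $a\delta_g\mapsto\pi(a)v_g$ is a fibrewise linear, multiplicative, \Star preserving map on the semidirect product bundle and integrate via the universal property, with uniqueness by density of the spanned sections. Your covariance bookkeeping (the identities $q_g\pi(a)=\pi(a)q_g=\pi(a)$, $\pi(a)v_g=v_g\pi(\alpha_{g^{-1}}(a))$, and the uses of items (ii) and (iii) of Definition~\ref{defn: partial-action}) all check out.
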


The \Star representation $\pi\times v$ is called the \emph{integrated form} of the covariant pair $(\pi, v)$. It follows from \cite[Theorem~13.2]{Exel:Partial_dynamical} that the map $(\pi, v)\mapsto \pi\times v$ is a one-to-one correspondence between covariant representations of $\alpha=(\{A_g\}_{g\in G}, \{\alpha_g\}_{g\in G})$ in~$\Bound(\Hilm[H])$ such that $v_gv_{g^{-1}}$ is the orthogonal projection onto $\pi(A_g)\Hilm[H]=\overline{\operatorname{span}}\{\pi(a)\xi\mid a\in A_g, \xi\in\Hilm[H]\}$ and nondegenerate \Star representations of the partial crossed product $A\rtimes_{\alpha}G$ on~$\Hilm[H]$.

\subsection{The diagonal partial action}

Let $\alpha=(\{A_g\}_{g\in G},\{\alpha_g\}_{g\in G})$ and $\beta=(\{B_g\}_{g\in G},\linebreak\{\beta_g\}_{g\in G})$ be partial actions of~$G$ on $\Cst$\nb-algebras~$A$ and $B$, respectively. Then $$\alpha\otimes\beta=(\{A_g\otimes_{\mu} B_g\}_{g\in G}, \{\alpha_g\otimes\beta_g\}_{g\in G})$$ is a partial action of~$G$ on~$A\otimes_{\mu} B$, where $\mu$ denotes either the maximal or the minimal tensor product. We will refer to $(\{A_g\otimes_{\mu} B_g\}_{g\in G}, \{\alpha_g\otimes\beta_g\}_{g\in G})$ as the \emph{diagonal partial action} obtained from $\alpha$ and $\beta$.

A particular example of a diagonal partial action will be important in this paper. Let $A$ be a $\Cst$\nb-algebra. The \emph{opposite algebra} of~$A$, denoted by $A^{\mathrm{op}}$, is the $\Cst$\nb-algebra with the same Banach space structure and involution operation as~$A$, but opposite multiplication. That is, for $a$ and $b$ in $ A^{\mathrm{op}}$, we have $a\cdot b\coloneqq ba$. A partial action $(\{A_g\}_{g\in G},\{\alpha_g\}_{g\in G})$ on~$A$ canonically induces a partial action $(\{A_g^{\mathrm{op}}\}_{g\in G}, \{\alpha^{\mathrm{op}}_g\}_{g\in G})$ on its opposite algebra. Here the isomorphism $\alpha_g^{\mathrm{op}}$ is simply $\alpha_g$ viewed as a map from $A_{g^{-1}}^{\mathrm{op}}$ to $A_g^{\mathrm{op}}$. Nuclearity and amenability for a group action on a $\Cst$\nb-algebra are closely related to the weak containment property for the diagonal action on~$A\otimes_{\mathrm{max}}A^{\mathrm{op}}$. See \cite[Theorem~1.1]{Matsumura} and \cite[Theorem~5.16]{buss2020amenability}. We will establish a similar connection for partial actions of exact discrete groups in Theorem~\ref{thm:noncommutative}.

\subsection{Bidual partial actions}

A \emph{$\Wst$\nb-partial action} of~$G$ on a $\Wst$\nb-algebra~$M$ is a partial action $\gamma=(\{M_g\}_{g\in G}, \{\gamma_g\}_{g\in G})$ of $G$ on $M$ as in Definition~\ref{defn: partial-action} (regarding $M$ as a $\Cst$\nb-algebra) with the additional requirement that each~$M_g$ be a $\Wst$\nb-ideal of $M.$
This implies that each $\gamma_g$ is a $\Wst$\nb-isomorphism because any \Star isomorphism between $\Wst$\nb-algebras is a $\Wst$\nb-isomorphism, that is, a normal \Star isomorphism.

By \cite[]{2019arXiv190703803A}, every partial action $\alpha=(\{A_g\}_{g\in G}, \{\alpha_g\}_{g\in G})$ induces a $\Wst$\nb-partial action on the double dual (enveloping) $\Wst$\nb-algebra $A''$ of~$A$. The collection of $\Wst$\nb-ideals of~$A''$ is given by the family of enveloping $\Wst$\nb-algebras $\{A_g''\}_{g\in G}$, where each $A_g''$ is regarded as a $\Wst$\nb-ideal of~$A''$ through the natural inclusion. Each \Star isomorphism $\alpha_g\colon A_{g^{-1}}\to A_g$ has a unique $\wst$\nb-continuous extension $\alpha_g''\colon A''_{g^{-1}}\to A_g''$, which is itself a $\Wst$\nb-isomorphism with inverse~$\alpha_{g^{-1}}''$. We will call $\alpha''=(\{A''_g\}_{g\in G},\{\alpha_g''\}_{g\in G})$ the \emph{bidual partial action} of~$\alpha=(\{A_g\}_{g\in G}, \{\alpha_g\}_{g\in G})$.

\begin{defn} A \emph{covariant $\Wst$\nb-representation} of a $\Wst$\nb-partial action $\gamma=(\{M_g\}_{g\in G}, \allowbreak \{\gamma_g\}_{g\in G})$ on a Hilbert space $\Hilm[H]$ is a covariant representation~$(\pi, v)$ of $\gamma$ in~$\Bound(\Hilm[H])$ such that $\pi$ is $\wst$\nb-continuous.
\end{defn}

\section{Haagerup standard form}

In this section, we recall some important aspects from the work of Haagerup~\cite{Haagerup} on the standard form of a von Neumann algebra. Matsumura used this presentation of a von Neumann algebra in~\cite{Matsumura} to establish sufficient conditions for nuclearity of a crossed product by an exact discrete group, motivated by the work of Anantharaman-Delaroche \cite{Anantharaman-Delaroche1987} on amenability for actions on von Neumann algebras and $\Cst$\nb-algebras. Buss, Echterhoff and Willett followed these ideas to generalise the main results of Matsumura in~\cite{Matsumura} to exact locally compact groups that are not necessarily discrete \cite{Buss-Echter-Willett, buss2020amenability}, and also to study and connect notions of amenability for group actions. In order to bring these ideas to the setting of partial actions, we will build for a $\Wst$\nb-partial action $(\{M_g\}_{g\in G}, \{\gamma_g\}_{g\in G})$ on a $\Wst$\nb-algebra $M$ a \Star partial representation of~$G$ on a standard form of~$M$, implementing the family of isomorphisms $\{\gamma_{g}\colon M_{g^{-1}}\to M_g\}_{g\in G}$.  See Proposition~\ref{prop:cov-standard} for further details.

We begin by stating a few important results and concepts of~\cite{Haagerup} that will be needed in the sequel.

\begin{thm}[\cite{Haagerup}*{Theorem 1.6}]\label{thm:standardform} Any von Neumann algebra is isomorphic to a von Neumann algebra~$M$ on a Hilbert space~$\Hilm[H]$ such that there exist a conjugate linear isometric involution $J\colon \Hilm[H]\to \Hilm[H]$ and a selfdual cone~$P$ in~$\Hilm[H]$ with the following properties:
\begin{enumerate}
\item[\rm{(1)}] $JMJ=M'$,
\item[\rm{(2)}] $JcJ=c^*$ for all~$c$ in the center $\mathrm{Z}(M)$,
\item[\rm{(3)}] $J\xi=\xi$ for all~$\xi\in P$,
\item[\rm{(4)}] $aa^t(P)\subseteq P$ for all $a\in M$, where $a^t=JaJ$.
\end{enumerate}
\end{thm}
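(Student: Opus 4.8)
The plan is to derive the statement from Tomita--Takesaki modular theory. I would treat first the case in which $M$ admits a faithful normal state --- equivalently, a cyclic and separating vector in some faithful normal representation --- and then reduce the general case to faithful normal semifinite weights.

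\textbf{The $\sigma$-finite case.} Suppose $M$ carries a faithful normal state $\varphi$ and realise $M$ on the GNS Hilbert space $\mathcal{H}$ of $\varphi$, with cyclic and separating vector $\Omega$. Let $S$ be the closure of the conjugate-linear map $a\Omega\mapsto a^*\Omega$ ($a\in M$), with polar decomposition $S=J\Delta^{1/2}$, so that $J$ is a conjugate-linear isometric involution and $\Delta>0$ is the modular operator. Tomita's theorem gives $JMJ=M'$, which is property~(1). I would then take
\[P:=\overline{\{\,a\,(JaJ)\,\Omega:a\in M\,\}},\]
which coincides with the natural cone $\overline{\Delta^{1/4}M_+\Omega}$. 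Write $a^t:=JaJ$; then $a^t\in M'$, so $a^t$ commutes with $a$, and moreover $(ab)^t=a^tb^t$, $Ja=a^tJ$ and $Ja^t=aJ$. On the dense set $\{a\,a^t\Omega:a\in M\}$ one computes $J(a\,a^t\Omega)=a^t a\,\Omega=a\,a^t\Omega$ using $J\Omega=\Omega$, which is property~(3), and $b\,b^t(a\,a^t\Omega)=(ba)(ba)^t\Omega\in P$ for $b\in M$, which is property~(4); both extend to $P$ by continuity. For property~(2): if $c\in\mathrm{Z}(M)$ then $c\in M'$, so $JcJ\in JM'J=M$ and hence $JcJ\in\mathrm{Z}(M)$; the modular automorphism group fixes $\mathrm{Z}(M)$ pointwise (because $\mathrm{Z}(M)$ lies in the centralizer of $\varphi$), so $\Delta^{1/2}c\Omega=c\Omega$, whence $JcJ\Omega=Jc\Omega=S(c\Omega)=c^*\Omega$, and since $\Omega$ separates $M$ we conclude $JcJ=c^*$. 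Finally, for self-duality of $P$, for $a,b$ in a dense subset of $M_+$ one has $\langle\Delta^{1/4}a\Omega,\Delta^{1/4}b\Omega\rangle=\langle\Delta^{1/2}a\Omega,b\Omega\rangle=\langle(a^t)^{1/2}b^{1/2}\Omega,(a^t)^{1/2}b^{1/2}\Omega\rangle\ge0$ (using $\Delta^{1/2}a\Omega=a^t\Omega$ and the commutation of $M$ with $M'$), so $P$ lies in its dual cone; the reverse inclusion follows from a Hahn--Banach separation argument, using that $P$ is a closed pointed cone with $\overline{P-P}=\{\xi\in\mathcal{H}:J\xi=\xi\}$.

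\textbf{The general case.} A von Neumann algebra need not be $\sigma$-finite, so I would replace $\varphi$ by a faithful normal semifinite weight, which always exists: take a maximal family of normal positive functionals with pairwise orthogonal support projections --- these necessarily sum to $1$ --- and add them up. The GNS construction attached to such a weight $\varphi$ gives a faithful normal representation of $M$ on the completion $\mathcal{H}_\varphi$ of the left Hilbert algebra $\mathfrak{n}_\varphi\cap\mathfrak{n}_\varphi^*$, and the (unbounded) Tomita--Takesaki theory for weights supplies $J_\varphi$, $\Delta_\varphi$, the commutation theorem $J_\varphi MJ_\varphi=M'$, and the natural positive cone
\[P_\varphi=\overline{\Delta_\varphi^{1/4}\{\Lambda_\varphi(x):x\in\mathfrak{n}_\varphi\cap\mathfrak{n}_\varphi^*,\ x\ge0\}}.\]
The verifications of properties (1)--(4) and of self-duality then run as in the $\sigma$-finite case, with the single vector $\Omega$ replaced by the dense family $\{\Lambda_\varphi(x)\}$ of GNS images and the ``$\Omega$ separates $M$'' step replaced by density together with normality.

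The main obstacle is concentrated entirely in the general case: one needs modular theory for weights --- the unbounded left Hilbert algebra machinery, Tomita's commutation theorem and self-duality of the natural cone at that level of generality --- rather than merely the version for states. Since the present paper only uses Haagerup's theorem as a tool, I would not reproduce the argument in detail; the sketch above is meant to indicate why the statement holds and to single out modular theory for weights as its one substantial ingredient.
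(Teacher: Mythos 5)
A preliminary remark on the comparison you asked for: the paper does not prove this statement at all --- it is quoted as Theorem~1.6 of Haagerup's paper on standard forms and used as a black box --- so the relevant benchmark is Haagerup's original argument, and your outline is essentially that argument. Realising $M$ on the GNS space of a faithful normal state (in general, of a faithful normal semifinite weight, whose existence you obtain correctly by summing a maximal family of normal positive functionals with orthogonal supports), taking $J$ from the polar decomposition of the closure of $x\Omega\mapsto x^*\Omega$ (resp.\ its left Hilbert algebra analogue), and taking $P$ to be the natural cone $\overline{\Delta^{1/4}M_+\Omega}$ is exactly the standard construction. Your verifications of (1), (3), (4), and of (2) via the observation that $\mathrm{Z}(M)$ lies in the centralizer and hence is fixed by the modular group, are correct, as is the use of the separating vector (resp.\ density plus normality) to upgrade the identity $JcJ\Omega=c^*\Omega$ to $JcJ=c^*$.

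The one step that does not work as written is self-duality of $P$. Hahn--Banach separation, together with the facts that $P$ is a closed pointed cone and $\overline{P-P}=\{\xi : J\xi=\xi\}$, only yields the following: those facts show that any $\eta$ in the dual cone is $J$-fixed, and if such an $\eta$ did not lie in $P$ one could separate it from $P$ by some $J$-fixed vector $\zeta$ with $\langle \zeta,\xi\rangle\geq 0$ for all $\xi\in P$ and $\langle \zeta,\eta\rangle<0$. But to derive a contradiction from this you would need to know that $\zeta\in P$, i.e.\ precisely the self-duality you are trying to prove; the argument is circular. The standard way to close it (Haagerup's, and the one in Bratteli--Robinson and Takesaki) is to consider the whole family of cones $P^{\alpha}=\overline{\Delta^{\alpha}M_+\Omega}$ for $0\leq\alpha\leq 1/2$, prove that the dual cone of $P^{\alpha}$ is $P^{1/2-\alpha}$ (the endpoint case being the duality between $\overline{M_+\Omega}$ and $\overline{M'_+\Omega}$), and then specialise to $\alpha=1/4$; the analogous statement for weights is what Haagerup actually establishes. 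With that replacement, and the usual care in transporting the state-based computations to the weight setting, your sketch matches the proof in the literature.
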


A quadruple $(M, \Hilm[H], J,P)$ satisfying conditions (1)--(4) is called a \emph{Haagerup standard form} of~$M$.

Let~$q$ be a projection of the form~$pp'$, where $p\in M$ and $p'\in M'$ are projections. Set~$qMq=\{qmq\mid m\in M\}$ viewed as a set of operators on $q(\Hilm[H])$. Then $qMq$ is a von Neumann algebra  by \cite[Lemma~2.4]{Haagerup}. The following lemma will be important to construct the implementation of a $\Wst$\nb-partial action on a von Neumann algebra in standard form. See Corollary 2.5 and Lemma~2.6 of \cite{Haagerup}.

\begin{lem}\label{lem:restriction} Let $(M,\Hilm[H], J,P)$ be a standard form. Let $p\in M$ be a projection. Set~$p^t=JpJ$ and let $q=pp^t$. Then $(qMq, q(\Hilm[H]), qJq,q(P))$ is a standard form. Moreover, the induction $pMp\to qMq$ is an isomorphism of~$pMp$ onto~$qMq$.
\end{lem}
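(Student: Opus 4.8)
The plan is to derive Lemma~\ref{lem:restriction} directly from the results of Haagerup that precede it, namely Theorem~\ref{thm:standardform} (the existence theorem for standard forms) together with \cite[Lemma~2.4, Corollary~2.5, Lemma~2.6]{Haagerup}. The point is that the statement is essentially a repackaging: given a projection $p\in M$, one wants to identify the reduced von Neumann algebra $pMp$ acting on $p(\Hilm[H])$ with the compressed algebra $qMq$ acting on $q(\Hilm[H])$, where $q=pp^t$ and $p^t=JpJ\in M'$ by property~(1), and to recognise the latter as a standard form.

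First I would observe that $p^t=JpJ$ is a projection in $M'$: conjugation by the isometric involution $J$ sends projections to projections, and $JMJ=M'$ by Theorem~\ref{thm:standardform}(1), so $q=pp^t$ is a product of a projection in $M$ and a commuting projection in $M'$, hence a projection of the type considered in \cite[Lemma~2.4]{Haagerup}; thus $qMq$ is a von Neumann algebra on $q(\Hilm[H])$. Next I would invoke \cite[Corollary~2.5 and Lemma~2.6]{Haagerup}, which (for a projection $p\in M$) state precisely that $(qMq, q(\Hilm[H]), qJq, q(P))$ is again a standard form, and that the induction (compression) map $pMp\to qMq$, $pmp\mapsto qmq$, is a $\Wst$\nb-isomorphism. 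The verification of the standard-form axioms for the compressed quadruple — that $qJq$ is a conjugate-linear isometric involution on $q(\Hilm[H])$ fixing $q(P)$, that $q(P)$ is a selfdual cone, that $qJq\,(qMq)\,qJq = (qMq)'$ within $\Bound(q(\Hilm[H]))$, and that the positivity condition~(4) passes to the compression — is exactly the content of those two results of Haagerup, so nothing new needs to be proved here. It is worth noting that $JqJ=JpJ\cdot JJpJJ$ simplifies using $J^2=\id$ and the fact that $p$ and $p^t$ commute, which makes $q$ symmetric under $J$ in the sense needed; this is the small computation that makes $qJq$ a well-defined involution on $q(\Hilm[H])$.

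The remaining point, that the induction $pMp\to qMq$ is an \emph{isomorphism} (and not merely a surjective normal $*$\nb-homomorphism), follows because the central support of $q$ relative to $M$ equals that of $p$: since $p^t\in M'$, compressing $M$ by $p^t$ does not change the $M$-action up to isomorphism on the range, so $m\mapsto qmq$ is injective on $pMp$. Concretely, $qMq$ acts on $q(\Hilm[H]) = p p^t(\Hilm[H]) \subseteq p(\Hilm[H])$, and the map $pmp \mapsto pmp|_{q(\Hilm[H])}$ is faithful because $p^t(\Hilm[H])$, being a reducing subspace for $M$ with central support $1$ (as $p^t\in M'$ has full central support when... in general one restricts to the relevant summand), separates the elements of $pMp$. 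This is the one spot where a genuine argument rather than a citation is needed, although again it is classical; alternatively one simply cites \cite[Lemma~2.6]{Haagerup} for the isomorphism statement verbatim.

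The main obstacle, such as it is, is bookkeeping: one must be careful to distinguish the three Hilbert spaces $\Hilm[H]$, $p(\Hilm[H])$, $q(\Hilm[H])$ and the three algebras $M$, $pMp$, $qMq$, and to track which commutant is meant (the commutant of $qMq$ is taken inside $\Bound(q(\Hilm[H]))$, not inside $\Bound(\Hilm[H])$). There is no deep difficulty — the lemma is a direct corollary of Haagerup's Lemma~2.4, Corollary~2.5 and Lemma~2.6 — so the proof can be kept to a few lines: assemble the hypotheses of those three results (check $p^t\in M'$, hence $q=pp^t$ is of the required form), quote them, and record the conclusion.
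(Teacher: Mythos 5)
Your proposal is correct and takes essentially the same route as the paper: the paper offers no independent argument for Lemma~\ref{lem:restriction}, but simply points to Corollary~2.5 and Lemma~2.6 of \cite{Haagerup} (with \cite[Lemma~2.4]{Haagerup} giving that $qMq$ is a von Neumann algebra), exactly as you do. Your extra sketch of why the induction $pMp\to qMq$ is injective is dispensable, since, as you yourself note, that statement is already contained verbatim in Haagerup's Lemma~2.6.
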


The next theorem concerns uniqueness of a Haagerup standard form of a von Neumann algebra. This is \cite[Theorem~2.3]{Haagerup}.

\begin{thm}[Uniqueness of standard form]\label{thm:uniqueness-stand} Let $(M_1, \Hilm[H]_1, J_1, P_1)$ and $(M_2, \Hilm[H]_2, J_2, P_2)$ be standard forms of von Neumann algebras~$M_1$ and $M_2$. Let $\phi\colon M_1\to M_2$ be a \Star isomorphism. Then there is a unique unitary $u\colon\Hilm[H]_1\to\Hilm[H]_2$ such that
\begin{enumerate}
\item[\rm{(1)}] $\phi(m)=umu^*$;

\item[\rm{(2)}] $J_2=uJ_1u^*$;

\item[\rm{(3)}] $P_2=u(P_1)$.

\end{enumerate}
\end{thm}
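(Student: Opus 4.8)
The plan is to prove Theorem~\ref{thm:uniqueness-stand} by combining an existence-type argument with a genuine uniqueness argument, reducing everything to the already-stated Theorem~\ref{thm:standardform}. First I would use Theorem~\ref{thm:standardform} to fix, for $i=1,2$, a standard form realisation, but that is already given to us; so the real content is that \emph{any} two standard forms of isomorphic von Neumann algebras are unitarily equivalent in a way compatible with $J$ and $P$. The standard strategy here is to pass through the canonical standard form built from a (faithful normal) weight or, in the $\sigma$-finite case, a cyclic and separating vector: one shows every standard form is spatially isomorphic to this canonical model via a unitary that intertwines the modular conjugations and the cones, and then transports $\phi$ through the two models.

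The key steps, in order, would be as follows. (1) Reduce to the case $M_1=M_2=M$ and $\phi=\mathrm{id}$: given the general $\phi$, compose with the unitary implementing $\phi$ between $(M_1,\Hilm[H]_1,J_1,P_1)$ and \emph{some} standard form of $M_2$ built by transporting the first one through $\phi$; it then suffices to compare two standard forms of the same algebra. (2) For two standard forms $(M,\Hilm[H]_1,J_1,P_1)$ and $(M,\Hilm[H]_2,J_2,P_2)$, construct a unitary $u\colon\Hilm[H]_1\to\Hilm[H]_2$ with $umu^*=m$ for all $m\in M$; this is where one invokes the theory of spatial derivatives / the fact that the standard representation is determined up to such a unitary (equivalently, that $\Hilm[H]_i\cong L^2(M)$ as $M$-$M$-bimodules), or, in the tracial or $\sigma$-finite case, direct GNS comparison. (3) Show such a $u$ is \emph{unique} up to the constraints: any two candidates differ by a unitary in the center that is forced to be trivial once we also demand $u(P_1)=P_2$, using selfduality of the cones and property~(3) of the standard form ($J\xi=\xi$ on the cone). (4) Check $uJ_1u^* = J_2$: both $uJ_1u^*$ and $J_2$ are conjugate-linear isometric involutions implementing the commutant flip for the \emph{same} representation of $M$ on $\Hilm[H]_2$ and fixing the \emph{same} cone $u(P_1)=P_2$; by the uniqueness of the modular conjugation attached to a selfdual cone (Haagerup, earlier in \cite{Haagerup}), they coincide. (5) Assemble: the composite unitary from step~(1) then satisfies (1)--(3) of the theorem, and uniqueness follows by applying step~(3) in the target.

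I would expect the main obstacle to be step~(2)--(3): producing the intertwining unitary and pinning down its uniqueness. The existence of a unitary intertwining the two $M$-representations on $\Hilm[H]_1$ and $\Hilm[H]_2$ is not automatic for a general von Neumann algebra — it rests on the structure theory of the standard representation (Haagerup's $L^2(M)$, or Connes' spatial theory), and a careful exposition should cite the relevant portion of \cite{Haagerup} rather than reprove it. Once such a $u$ exists, the cone condition $u(P_1)=P_2$ is what rigidifies the choice: if $u,u'$ both intertwine the representations, then $u'u^*\in M'\cap\ldots$ lands in a place controlled by $J$ and the center, and requiring it to preserve the selfdual cone forces it to be the identity. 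The compatibility $uJ_1u^*=J_2$ should then be essentially formal given the uniqueness of the conjugation associated with a fixed selfdual cone. I would present (2)--(4) at the level of ``by \cite{Haagerup} the standard form is unique up to a unitary with these properties; here is why each property holds and why it is unique,'' and keep the bimodule/spatial-theory input as a black box since it is upstream of this paper's contribution.
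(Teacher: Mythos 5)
The paper offers no proof of this statement at all: it is quoted verbatim as \cite[Theorem~2.3]{Haagerup}, so the only meaningful comparison is between your sketch and Haagerup's original argument. Read that way, your outline is a reasonable roadmap but not an independent proof, because its core step (2) --- the existence of a unitary intertwining two standard representations of the same von Neumann algebra --- is exactly the substance of Haagerup's theorem, and you explicitly black-box it by citing the same source. That is circular as a proof of the theorem itself, though perfectly harmless for the purposes of this paper, which likewise imports the whole statement as an external input. Your reduction in step (1) is fine: transport the first standard form through $\phi$ by letting $M_2$ act on $\Hilm[H]_1$ via $\phi^{-1}$, which reduces everything to comparing two standard forms of one algebra. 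Your step (4) is also correct and essentially elementary: the closed real span of a selfdual cone $P$ is a real Hilbert subspace $K$ with $\Hilm[H]=K\oplus iK$, and any conjugate-linear isometric involution fixing $P$ pointwise is $+1$ on $K$ and $-1$ on $iK$; hence $uJ_1u^*$ and $J_2$, both fixing $P_2=u(P_1)$ pointwise, coincide.

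The one genuine imprecision is in your uniqueness step (3). If $u$ and $u'$ both satisfy (1)--(3), then $w=u^*u'$ is a unitary in the \emph{commutant} $M_1'$, not in the centre, and no centrality argument is available or needed. What forces $w=1$ is the cone--state bijection from \cite{Haagerup}: the map $\xi\mapsto\omega_\xi$, $\omega_\xi(m)=\braket{m\xi}{\xi}$, is injective on $P_1$ (indeed a bijection onto the positive normal functionals of $M_1$). Since $w\in M_1'$ one has $\omega_{w\xi}=\omega_\xi$ on $M_1$ for every $\xi\in P_1$, and $w(P_1)=P_1$ then gives $w\xi=\xi$ on $P_1$; as $P_1$ spans $\Hilm[H]_1$, $w=1$. (Note this uses only conditions (1) and (3) of the theorem, not (2).) With that repair, your sketch is a correct reduction of the statement to the standard-representation machinery it cites, which is exactly how the paper treats it --- as an upstream result rather than something to be reproved.
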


 Recall that a $\Wst$\nb-enveloping action $(N,\tilde{\gamma})$ of a $\Wst$\nb-partial action $\gamma=(\{M_g\}_{g\in G}, \allowbreak\{\gamma_g\}_{g\in G})$ consists of a $\Wst$\nb-global action $\tilde{\gamma}\colon G\to\mathrm{Aut}(N)$, and an embedding $ M\hookrightarrow N$ of~$M$ as a $\Wst$\nb-ideal of~$N$ in a way that $\gamma=(\{M_g\}_{g\in G}, \{\gamma_g\}_{g\in G})$ is isomorphic to the $\Wst$\nb-partial action given by the restriction of~$\tilde{\gamma}$ to~$M$, and such that the linear $\tilde{\gamma}$\nb-orbit of~$M$ is $\wst$\nb-dense in~$N$. A $\Wst$\nb-partial action always has a $\Wst$\nb-enveloping action by \cite[Proposition~2.7]{2019arXiv190703803A}, which is unique up to isomorphism. The notion of enveloping actions was originally introduced by Abadie \cite{Abadie} in the $\Cst$\nb-context. 

Using $\Wst$\nb-enveloping actions and restriction and uniqueness of a Haagerup standard form, we can prove the following. 

\begin{prop}\label{prop:cov-standard} Let $\gamma=(\{M_g\}_{g\in G}, \{\gamma_g\}_{g\in G})$ be a $\Wst$\nb-partial action on a $\Wst$\nb-algebra $M$ and let $(M, \Hilm[H], J, P)$ be a Haagerup standard form of~$M$. Let $\iota\colon M\to\Bound(\Hilm[H])$ be the inclusion. There exists a \Star partial representation $v\colon G\to\Bound(\Hilm[H])$ such that~$(\iota, v)$ is a covariant $\Wst$\nb-representation of~$ \gamma=(\{M_g\}_{g\in G}, \{\gamma_g\}_{g\in G})$. Moreover, the pair $(\iota^{\operatorname{op}}, v)$ is a covariant $\Wst$\nb-representation of $\gamma^{\mathrm{op}}=(\{M_g^{\mathrm{op}}\}_{g\in G}, \{\gamma^{\mathrm{op}}_g\}_{g\in G})$, where $\iota^{\mathrm{op}}\colon M^{\mathrm{op}}\to \Bound(\Hilm[H])$ is the $\Wst$\nb-isomorphism $m\mapsto Jm^*J$ from $M^{\mathrm{op}}$ onto~$\iota(M)'$.
\begin{proof} Let $(N,\tilde{\gamma})$ be the $\Wst$\nb-enveloping action of $\gamma$. Let $(N,\tilde{\Hilm[H]}, \tilde{J},\tilde P)$ be a Haagerup standard form of~$N$. We view~$N$ as a von Neumann subalgebra of~$\Bound(\tilde{\Hilm[H]})$ and so we introduce no special notation for the inclusion map $N\hookrightarrow \Bound(\tilde{\Hilm[H]})$. By \cite[Theorem~3.2]{Haagerup}, there exists a unique unitary representation $u\colon G\to\Bound(\tilde{\Hilm[H]})$ implementing the family of \Star automorphisms $\{\tilde{\gamma}_g\colon N\to N\}_{g\in G}$ and satisfying for all $g\in G$, $$\tilde J=u_g\tilde J u_{g^{-1}}\quad\text{ and } \quad u_g(\tilde P)=\tilde P.$$ Let $1_M\in M\subset N$ be the unit of~$M$. Then $1_M$ is a central projection in~$N$. Hence $(M, 1_M(\tilde{\Hilm[H]}),1_M\tilde J 1_M, 1_M(\tilde{P}))$ is a standard form by  Lemma~\ref{lem:restriction}. Since $1_M$ is a central projection in~$N$, the map $v'\colon G\to \Bound(1_M(\tilde{\Hilm[H]}))$ given by $v_g'\coloneqq 1_Mu_g1_M$ is a \Star partial representation (see \cite[Proposition~9.5]{Exel:Partial_dynamical}). This yields a covariant $\Wst$\nb-representation of~$\gamma$ when combined with the inclusion $M\hookrightarrow\Bound(1_{M}(\tilde{\Hilm[H]}))$. Because $\tilde{J}u_g=u_g\tilde{J}$ in~$\Bound(\tilde{\Hilm[H]})$ for every~$g\in G$, the map that sends $m\in M^{\mathrm{op}}$ to $1_M\tilde{J}m^*\tilde{J}1_M\in \Bound(1_M(\tilde{\Hilm[H]}))$ and the \Star partial representation~$v'\colon G\to\Bound(1_M(\tilde{\Hilm[H]}))$ give a covariant pair for $\gamma^{\mathrm{op}}$.

Now given an arbitrary standard form~$(M,\Hilm[H],J,P)$ of~$M$ with inclusion map~$\iota\colon M\to\Bound(\Hilm[H])$, let~$v\colon G\to\Bound(\Hilm[H])$ be the canonical \Star partial representation obtained from the \Star partial representation of~$G$ in $\Bound(1_M(\tilde{\Hilm[H]}))$ built above and the unitary implementing the \Star isomorphism $M\cong \iota(M)$ from Theorem~\ref{thm:uniqueness-stand}. Thus $(\iota, v)$ is a covariant $\Wst$\nb-representation of~$\gamma=(\{M_g\}_{g\in G}, \{\gamma_g\}_{g\in G})$ in~$\Bound(\Hilm[H])$ such that  $(\iota^{\operatorname{op}}, v)$ is a covariant $\Wst$\nb-representation of~$\gamma^{\mathrm{op}}$. This completes the proof of the proposition.
\end{proof}
\end{prop}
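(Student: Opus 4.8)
The plan is to reduce the construction to the case of a \emph{global} $\Wst$\nb-action, where Haagerup's canonical unitary implementation of automorphisms on the standard form is available, and then to compress everything back to~$M$ by means of the central projection~$1_M$. Concretely, I would first pass to the $\Wst$\nb-enveloping action $(N,\tilde\gamma)$ of~$\gamma$, which exists and is unique by \cite[Proposition~2.7]{2019arXiv190703803A}: then $M$ embeds into~$N$ as a $\Wst$\nb-ideal, so~$1_M$ is a central projection in~$N$; $\tilde\gamma\colon G\to\mathrm{Aut}(N)$ is a genuine group action; and~$\gamma$ is (isomorphic to) the restriction of~$\tilde\gamma$ to~$M$, so in particular $\tilde\gamma_g(M_{g^{-1}})=M_g$ and $\gamma_g=\tilde\gamma_g|_{M_{g^{-1}}}$. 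Fixing a Haagerup standard form $(N,\tilde{\Hilm[H]},\tilde J,\tilde P)$ of~$N$ and viewing $N\subseteq\Bound(\tilde{\Hilm[H]})$, I would then apply \cite[Theorem~3.2]{Haagerup} to obtain a unitary \emph{representation} $u\colon G\to\Bound(\tilde{\Hilm[H]})$ with $u_g n u_g^*=\tilde\gamma_g(n)$ for all $n\in N$ and $g\in G$, and additionally $u_g\tilde J u_g^*=\tilde J$ and $u_g(\tilde P)=\tilde P$; in particular $u_g$ commutes with~$\tilde J$.

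Since $1_M$ is a central projection in~$N$, Lemma~\ref{lem:restriction} shows that $(M,1_M(\tilde{\Hilm[H]}),1_M\tilde J1_M,1_M(\tilde P))$ is a standard form of~$M$, and I would define $v'_g\defeq 1_M u_g 1_M\in\Bound(1_M(\tilde{\Hilm[H]}))$. The relations $v'_e=1_M$ and $(v'_g)^*=v'_{g^{-1}}$ are immediate, while the partial-representation identity $v'_gv'_hv'_{h^{-1}}=v'_{gh}v'_{h^{-1}}$ reduces, using $u_{gh}=u_gu_h$, to $1_M u_g(1-1_M)u_h1_Mu_{h^{-1}}1_M=0$, which holds because $(1-1_M)u_h1_Mu_h^*=(1-1_M)\tilde\gamma_h(1_M)$ and $\tilde\gamma_h(1_M)$ is central in~$N$ (one may instead cite \cite[Proposition~9.5]{Exel:Partial_dynamical}). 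For covariance, given $a\in M_{g^{-1}}$ one computes $v'_g\,a\,v'_{g^{-1}}=1_M u_g a u_{g^{-1}}1_M=1_M\tilde\gamma_g(a)1_M=\gamma_g(a)$, since $\tilde\gamma_g(a)\in M_g\subseteq M$; hence $(\iota_0,v')$ is a covariant $\Wst$\nb-representation of~$\gamma$ on this standard form, where $\iota_0$ is the inclusion $M\hookrightarrow\Bound(1_M(\tilde{\Hilm[H]}))$.

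For the opposite partial action I would set $\iota_0^{\op}(m)\defeq(1_M\tilde J1_M)\,m^*\,(1_M\tilde J1_M)$, a $\Wst$\nb-isomorphism of~$M^{\op}$ onto~$\iota_0(M)'$ by item~(1) of Theorem~\ref{thm:standardform}; using $u_g\tilde J=\tilde J u_g$ and $u_{g^{-1}}=u_g^*$, a short computation gives $v'_g\,\iota_0^{\op}(a)\,v'_{g^{-1}}=1_M\tilde J\,\tilde\gamma_g(a)^*\,\tilde J1_M=\iota_0^{\op}(\gamma_g^{\op}(a))$ for $a\in M_{g^{-1}}$, so $(\iota_0^{\op},v')$ is covariant for~$\gamma^{\op}$. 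Finally, for an arbitrary standard form $(M,\Hilm[H],J,P)$ I would invoke the uniqueness statement, Theorem~\ref{thm:uniqueness-stand}, to produce a unitary $w\colon 1_M(\tilde{\Hilm[H]})\to\Hilm[H]$ with $\iota(m)=w\iota_0(m)w^*$, $J=w(1_M\tilde J1_M)w^*$ and $P=w(1_M(\tilde P))$, and set $v_g\defeq wv'_gw^*$; conjugation by~$w$ preserves the partial-representation relations and transports both covariance identities, and it carries $\iota_0^{\op}$ to $m\mapsto Jm^*J=\iota^{\op}(m)$, completing the argument. I expect the one step with real content to be the verification that the compression $1_M u_g 1_M$ is a genuine \Star partial representation implementing~$\gamma$ (and~$\gamma^{\op}$): this is exactly where the centrality of~$1_M$ in the enveloping algebra~$N$ and the commutation of~$u_g$ with~$\tilde J$ are essential; everything else is bookkeeping with the restriction and uniqueness lemmas for Haagerup standard forms.
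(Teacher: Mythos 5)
Your proposal is correct and follows essentially the same route as the paper's own proof: pass to the $\Wst$\nb-enveloping action, apply Haagerup's implementation theorem on a standard form of~$N$, compress the resulting unitary representation by the central projection~$1_M$ to get the \Star partial representation, and transfer to an arbitrary standard form of~$M$ via Lemma~\ref{lem:restriction} and Theorem~\ref{thm:uniqueness-stand}. The only difference is that you verify the partial-representation identity and the two covariance relations explicitly rather than citing \cite[Proposition~9.5]{Exel:Partial_dynamical}, and these verifications are correct.
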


\begin{rem} It is also possible to prove Proposition~\ref{prop:cov-standard} by building directly a \Star partial representation of~$G$ in~$\Bound(\Hilm[H])$ implementing the family of \Star isomorphisms $\{\gamma_g\colon M_{g^{-1}}\to M_g\}_{g\in G}$, where $(M,\Hilm[H], J,P)$ is a Haagerup standard form of~$M$. The proof follows along the same lines as that of \cite[Theorem~3.2]{Haagerup}. Indeed, by Lemma~\ref{lem:restriction} and Theorem~\ref{thm:uniqueness-stand} there exists a unique unitary $$\bar{v}_g\colon 1_{g^{-1}}(\Hilm[H])\to 1_g(\Hilm[H])$$ implementing the \Star isomorphism~$\gamma_g\colon M_{g^{-1}}\to M_g$ and such that $$1_gJ1_g=\bar{v}_g1_{g^{-1}}J1_{g^{-1}}\bar{v}_g^* \quad\text{ and }\quad
\bar{v}_g(1_{g^{-1}}(P))=1_{g}(P).$$ Thus $\gamma_g(m)=\bar{v}_gm\bar{v}_g^*$ for all $m\in M_{g^{-1}}$. Let $v_g$ be the unique partial isometry on~$\Hilm[H]$ such that~$v_g= \bar{v}_g$ on $1_{g^{-1}}(\Hilm[H])$ and $v_g\equiv 0$ on the orthogonal complement $1_{g^{-1}}(\Hilm[H])^\perp$. In particular, $v_gv_{g^{-1}}=1_g$. After some applications of Lemma~\ref{lem:restriction} and of the uniqueness of the unitary in Theorem~\ref{thm:uniqueness-stand}, one can show that $g\mapsto v_g$ is a \Star partial representation of~$G$ in~$\Bound(\Hilm[H])$ satisfying the desired properties.
\end{rem}

\section{Nuclearity for partial crossed products}\label{sec:mainsection}

	In this section we generalise results due to Matsumura in \cite{Matsumura} on nuclearity for crossed products by exact discrete groups to partial crossed products. We show that the partial crossed product of a commutative $\Cst$\nb-algebra by an exact discrete group is nuclear whenever the left regular representation implements an isomorphism between the full and reduced partial crossed products. In general, we show that a partial action of an exact discrete group on a $\Cst$\nb-algebra~$A$ has the approximation property (see Definition~\ref{defn:AP}) if and only if the full and reduced partial crossed products associated to the diagonal partial action on~$A\otimes_\max A^\mathrm{op}$ coincide (see Theorem~\ref{thm:noncommutative} for details). In particular, the partial crossed product of a nuclear $\Cst$\nb-algebra by an exact discrete group is again nuclear if and only if the diagonal partial action $\alpha\otimes \alpha^{\mathrm{op}}$ satisfies the weak containment property. For ordinary (global) actions of discrete groups on (unital) $\Cst$\nb-algebras, these results were obtained  by Matsumura in \cite{Matsumura} under the nuclearity assumption on~$A$ in the noncommutative version. They have been recently extended to general locally compact group actions by Buss, Echterhoff and Willett in \cite{buss2020amenability}, without assuming nuclearity.

\subsection{Approximation properties} The following definition of approximation property for a Fell bundle is due to Exel~\cite[Definition~20.4]{Exel:Partial_dynamical}.

\begin{defn}\label{defn:AP} Let $\Hilm[B]=(B_g)_{g\in G}$ be a Fell bundle over a discrete group~$G$. We say that $\Hilm[B]$ has the \emph{approximation property} if
there exists a net $(\xi_i\colon G \to B_e)_{i\in I}$ of finitely supported functions with the following properties:
\begin{itemize}
\item[\rm{(i)}] $\sup_{\substack{i\in I}} \|\sum_{\substack{g\in G}}\xi_i(g)^*\xi_i(g)\|<\infty$;

\item[\rm{(ii)}] $\lim_{\substack{i}} \sum_{\substack{h\in G}}\xi_i(gh)^*a\xi_i(h)=a$, for all $g\in G$  and $a\in B_g$ .
\end{itemize}

A partial action $\alpha=(\{A_g\}_{g\in G},\{\alpha_g\}_{g\in G})$ has the approximation property if the associated Fell bundle $\Hilm[B]_{\alpha}$ has the approximation property.
\end{defn}

A weak variant of the approximation property for Fell bundles as well as a version of Anantharaman-Delaroche's amenability  
were defined in \cite[Definition~6.3]{2019arXiv190703803A} and \cite[Definition~5.20]{2019arXiv190703803A}, respectively.
But all this turns out to be equivalent to the approximation property of Exel by the recent results obtained in \cite[Theorem~4.22]{buss2020amenability}, see \cite[Theorem~6.12]{2019arXiv190703803A}. In particular, the nuclearity of the cross-sectional  \cstar{}algebra $\Cst(\B)$ implies the approximation property of $\B$ by \cite[Corollary~4.23]{buss2020amenability}. We shall use these results in what follows.

Given Fell bundles $\A=(A_g)_{g\in G}$ and $\B=(B_g)_{g\in G}$, we will be concerned with the Fell bundle over~$G$ obtained from the maximal tensor product of~$\A$ and $\B$. This is the restriction of the maximal tensor product of~$\A$ and $\B$ as in \cite{Abadie:Tensor}*{Proposition~3.10} to the diagonal subgroup $G\cong\{(g,g)\mid g\in G\}\leq G\times G$, and so its fibre at $g\in G$ is $A_g\otimes_\max B_g$. We will simply denote by $\A\otimes_\max \B$ the resulting Fell bundle over~$G$, but we observe that this notation is adopted in~\cite{Abadie:Tensor} for the Fell bundle over~$G\times G$ produced from~$\A$ and~$\B$. Similarly, we let $\A\otimes_\min \B$ denote the Fell bundle over~$G$ whose fibre at $g\in G$ is $A_g\otimes_\min B_g$. We shall need the following permanence property of Exel's approximation property with respect to tensor products of Fell bundles.

\begin{prop}\label{prop:nuclearity-diag-tensor} 
Let $\A=(A_g)_{g\in G}$ and $\B=(B_g)_{g\in G}$ be Fell bundles over a discrete group~$G$ with unit fibre algebras~$A$ and $B$, respectively. If $\A$ has the approximation property, 
then so do the maximal and minimal tensor products 
$\A\otimes_\max\B$ and $\A\otimes_\min\B$. In particular, $\Cst(\A\otimes_\mu \B)=\Cst_\red(\A\otimes_\mu \B)$ if $\mu$ denotes either the maximal or the minimal tensor product.  If, in addition, $A$ and~$B$ are nuclear, then so is $\Cst(\A\otimes\B)$.
\begin{proof} Since~$\A$ has the approximation property, there exists a net of finitely supported functions $(\xi_i \colon G\to A)_{i\in I}$ with $\sum_{\substack{g\in G}}\xi_i(g)^*\xi_i(g)\leq M$ for all~$i\in I$ for some constant $M>0$, satisfying for all~$g\in G$, $$\lim_{\substack{i}}\sum_{\substack{h\in G}}\xi_i(gh)^*a \xi_i(h))=a$$
for all $a\in A_g$. To see that $\A\otimes_\mu\B$ also has the approximation property, let $(u_{j})_{j\in J}$ be an approximate identity for~$B$. For $(i,j)\in I\times J$, set 
\begin{equation*}
\begin{aligned}\eta_{i,j}\colon G&\to A\otimes_\mu B\\
g&\mapsto \xi_i(g)\otimes u_{j}.
\end{aligned}
\end{equation*}
Then $(\eta_{i,j}\colon G\to  A\otimes_\mu B)_{(i,j)\in I\times J}$ is a net of finitely supported functions with $$\sup_{(i,j)\in I\times J}\|\sum_{\substack{g\in G}}\eta_{i,j}(g)^*\eta_{i,j}(g)\|\leq M.$$ Moreover, for~$g\in G$ and an elementary tensor $c=a\otimes b\in A_g\otimes B_g$, we have 
\begin{equation*}\begin{aligned}\lim_{\substack{(i,j)}} \sum_{\substack{h\in G}}\eta_{i,j}(gh)^*c \eta_{i,j}(h)&=\lim_{\substack{(i,j)}} \sum_{\substack{h\in G}}(\xi_i(gh)^*\otimes u_{j})(a\otimes b)(\xi_i(h)\otimes u_{j})\\&=\lim_{\substack{(i,j)}} \sum_{\substack{h\in G}}\xi_i(gh)^*a\xi_i(h) \otimes u_j bu_j\\&=c.
\end{aligned}
\end{equation*} By continuity, the same holds for all $c\in A_g\otimes_\mu B_g$ and this shows that $\A\otimes_\mu\B$ has the approximation property. In particular, $\Cst(\A\otimes_\mu \B)=\Cst_\red(\A\otimes_\mu \B)$ by \cite[Theorem~20.6]{Exel:Partial_dynamical}. If, in addition, $A$ and $B$ are nuclear, then nuclearity of $\Cst(\A\otimes\B)$ follows from \cite[Proposition~25.10]{Exel:Partial_dynamical}.
\end{proof}
\end{prop}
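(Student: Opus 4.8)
\section*{Proof proposal for Proposition~\ref{prop:nuclearity-diag-tensor}}

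The plan is to manufacture, out of a net witnessing the approximation property for $\A$, an explicit net witnessing it for $\A\otimes_\mu\B$ (handling $\mu=\min$ and $\mu=\max$ simultaneously), and then to read off the two consequences about $\Cst$ versus $\Cst_\red$ and about nuclearity directly from the cited results of Exel. Concretely, fix a net $(\xi_i\colon G\to A)_{i\in I}$ of finitely supported functions with $\sup_i\|\sum_{g}\xi_i(g)^*\xi_i(g)\|\le M$ and $\sum_{h}\xi_i(gh)^*a\,\xi_i(h)\to a$ for all $g\in G$ and $a\in A_g$, and fix an approximate unit $(u_j)_{j\in J}$ for $B$ (positive, contractive). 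Set
\[
\eta_{i,j}(g)\defeq\xi_i(g)\otimes u_j\in A\otimes_\mu B,
\]
a finitely supported function $G\to A\otimes_\mu B$ indexed by the product directed set $I\times J$.

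First I would check the uniform bound. Since $0\le u_j^2\le 1$ in $\Mult(B)$ and tensoring positive elements gives positive elements, $\sum_{g}\eta_{i,j}(g)^*\eta_{i,j}(g)=\bigl(\sum_{g}\xi_i(g)^*\xi_i(g)\bigr)\otimes u_j^2\le\bigl(\sum_{g}\xi_i(g)^*\xi_i(g)\bigr)\otimes 1$ in $\Mult(A\otimes_\mu B)$; as $x\mapsto x\otimes 1$ is an isometric embedding $A\hookrightarrow\Mult(A\otimes_\mu B)$, the norm of the right-hand side is $\|\sum_{g}\xi_i(g)^*\xi_i(g)\|\le M$, uniformly in $(i,j)$. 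Next I would verify the convergence condition on elementary tensors: for $c=a\otimes b\in A_g\otimes B_g$ one computes $\sum_{h}\eta_{i,j}(gh)^*c\,\eta_{i,j}(h)=\bigl(\sum_{h}\xi_i(gh)^*a\,\xi_i(h)\bigr)\otimes u_jbu_j$, and since the first factor is bounded (by $M\|a\|$) and converges to $a$ while $u_jbu_j\to b$, joint continuity of $(s,t)\mapsto s\otimes t$ on bounded sets gives convergence to $a\otimes b=c$ along $I\times J$. To pass from elementary tensors to all of $A_g\otimes_\mu B_g$ I would invoke the Cauchy--Schwarz estimate in a Fell bundle: for $x_h,y_h\in B_e$ and $c$ in the fibre over $g$, $\|\sum_h x_h^*c\,y_h\|\le\|\sum_h x_h^*x_h\|^{1/2}\,\|c\|\,\|\sum_h y_h^*y_h\|^{1/2}$; applied with $x_h=\eta_{i,j}(gh)$ and $y_h=\eta_{i,j}(h)$, and after reindexing the first sum, this yields $\|\sum_{h}\eta_{i,j}(gh)^*c\,\eta_{i,j}(h)\|\le M\|c\|$ uniformly in $(i,j)$. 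Hence the maps $c\mapsto\sum_{h}\eta_{i,j}(gh)^*c\,\eta_{i,j}(h)$ are equicontinuous, so convergence on the dense subspace of finite sums of elementary tensors forces convergence on all of $A_g\otimes_\mu B_g$. This establishes the approximation property for $\A\otimes_\mu\B$.

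The remaining claims are then immediate. Exel's \cite[Theorem~20.6]{Exel:Partial_dynamical} gives $\Cst(\A\otimes_\mu\B)=\Cst_\red(\A\otimes_\mu\B)$ from the approximation property, for $\mu=\min$ and $\mu=\max$ alike. If moreover $A$ and $B$ are nuclear, then the unit fibre $A\otimes B$ of $\A\otimes\B$ is nuclear, so the approximation property together with nuclearity of the unit fibre upgrades to nuclearity of $\Cst(\A\otimes\B)$ by \cite[Proposition~25.10]{Exel:Partial_dynamical}.

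The only genuinely delicate point I anticipate is bookkeeping rather than mathematics: one must confirm that every inequality used for $\eta_{i,j}$ (positivity of tensors, $\|x\otimes 1\|=\|x\|$, the cross-norm estimate for joint continuity, and the Fell-bundle Cauchy--Schwarz bound) is valid verbatim for \emph{both} $\mu=\min$ and $\mu=\max$, and that inserting the approximate unit $(u_j)_j$ in the nonunital case does not spoil the uniform bound $M$. Once the equicontinuity estimate is in place, extending the convergence from elementary tensors to the whole fibre, and deducing the corollaries from Exel's theorems, are routine.
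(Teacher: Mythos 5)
Your proposal is correct and follows essentially the same route as the paper: the same net $\eta_{i,j}(g)=\xi_i(g)\otimes u_j$ over the product directed set, the same uniform bound and elementary-tensor computation, and the same appeals to Exel's Theorem~20.6 and Proposition~25.10. The only difference is that you spell out the ``by continuity'' step via the uniform estimate $\|\sum_h\eta_{i,j}(gh)^*c\,\eta_{i,j}(h)\|\le M\|c\|$, which is a valid (and welcome) filling-in of a detail the paper leaves implicit.
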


Let $\alpha$ and $\beta$ be partial actions on $\Cst$\nb-algebras $A$ and $B$, respectively. The tensor product $\B_\alpha\otimes_\mu\B_\beta$ of their semidirect product bundles is canonically isomorphic to the semidirect product bundle associated to the diagonal partial action on $A\otimes_\mu B$. Specifically, the map that sends an elementary tensor $a\delta_g\otimes b\delta_g\in \B_{\alpha_g}\otimes_\mu\B_{\beta_g}$ to $(a\otimes b)\delta_g\in \B_{\alpha_g\otimes\beta_g}$ induces an isomorphism $\varphi_g\colon  \B_{\alpha_g}\otimes_\mu\B_{\beta_g}\congto \B_{\alpha_g\otimes\beta_g}$ of Banach spaces, and the collection $\varphi=\{\varphi_g\}_{g\in G}$ produces an isomorphism $\B_\alpha\otimes_\mu\B_\beta\cong  \B_{\alpha\otimes\beta}$ of Fell bundles (see \cite[Definition~21.1]{Exel:Partial_dynamical}). This then immediately yields the following:

\begin{cor}\label{cor:nuclearity-diag} 
Let $\alpha=(\{A_g\}_{g\in G},\{\alpha_g\}_{g\in G})$ and $\beta=(\{B_g\}_{g\in G},\{\beta_g\}_{g\in G})$ be partial actions of a discrete group~$G$ on $\Cst$\nb-algebras~$A$ and $B$, respectively. If $\alpha$ has the approximation property, then so does the diagonal partial action $$(\{A_g\otimes_\mu B_g\}_{g\in G},\{\alpha_g\otimes \beta_g\}_{g\in G})$$ of $G$ on $A\otimes_\mu B$, where $\mu$ denotes either the maximal or the minimal tensor product. In particular, $$(A\otimes_\mu B)\rtimes_{\alpha\otimes\beta}G=(A\otimes_\mu B)\rtimes_{\alpha\otimes\beta, \red}G.$$ If, in addition, $A$ and $B$ are nuclear, then so is the partial crossed product $(A\otimes B)\rtimes_{\alpha\otimes\beta}G$.
\end{cor}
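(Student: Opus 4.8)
The plan is to deduce Corollary~\ref{cor:nuclearity-diag} directly from Proposition~\ref{prop:nuclearity-diag-tensor} by transporting everything through the semidirect product bundle picture. The key observation, already recorded in the paragraph preceding the statement, is that the semidirect product bundle of the diagonal partial action $\alpha\otimes\beta$ is isomorphic, as a Fell bundle over~$G$, to the $\mu$\nobreakdash-tensor product $\B_\alpha\otimes_\mu\B_\beta$ of the semidirect product bundles of~$\alpha$ and~$\beta$; the isomorphism is implemented fibrewise by $\varphi_g\colon a\delta_g\otimes b\delta_g\mapsto (a\otimes b)\delta_g$ on elementary tensors, and at the unit fibre it is the identification $A\otimes_\mu B\cong(\B_\alpha)_e\otimes_\mu(\B_\beta)_e$.

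So the first step is: since $\alpha$ has the approximation property, by Definition~\ref{defn:AP} the Fell bundle $\B_\alpha$ has the approximation property. Second step: apply Proposition~\ref{prop:nuclearity-diag-tensor} with $\A=\B_\alpha$ and $\B=\B_\beta$ to conclude that $\B_\alpha\otimes_\mu\B_\beta$ has the approximation property. Third step: the approximation property is an isomorphism invariant of Fell bundles --- one simply pushes a witnessing net $(\xi_i)$ forward along the bundle isomorphism $\varphi$, and conditions (i) and (ii) of Definition~\ref{defn:AP} are preserved because $\varphi$ is isometric on fibres and multiplicative --- so $\B_{\alpha\otimes\beta}$ has the approximation property, which is by definition the statement that the diagonal partial action $(\{A_g\otimes_\mu B_g\}_{g\in G},\{\alpha_g\otimes\beta_g\}_{g\in G})$ has it. Fourth step: the coincidence $(A\otimes_\mu B)\rtimes_{\alpha\otimes\beta}G=(A\otimes_\mu B)\rtimes_{\alpha\otimes\beta,\red}G$ is then immediate from \cite[Theorem~20.6]{Exel:Partial_dynamical}, exactly as in the proof of Proposition~\ref{prop:nuclearity-diag-tensor}, and the final sentence about nuclearity when $A$ and $B$ are nuclear follows from \cite[Proposition~25.10]{Exel:Partial_dynamical} (or directly from the ``in addition'' clause of Proposition~\ref{prop:nuclearity-diag-tensor}, since the unit fibre algebra of $\B_\alpha\otimes\B_\beta$ is $A\otimes B$).

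There is essentially no obstacle here; the corollary is a formal consequence. The only point requiring a word of care is the invariance of Exel's approximation property under Fell bundle isomorphism used in the third step. This is standard, but to be safe one can either invoke that $\Cst_\red$ and the approximation property depend only on the isomorphism class of the bundle, or simply observe that the proof of Proposition~\ref{prop:nuclearity-diag-tensor} in fact already produces, after transport through $\varphi$, an explicit witnessing net for $\B_{\alpha\otimes\beta}$: writing $\eta_{i,j}(g)=\xi_i(g)\otimes u_j$ for a witnessing net $(\xi_i)$ of $\B_\alpha$ and an approximate unit $(u_j)$ of~$B$, the functions $g\mapsto\varphi_g(\eta_{i,j}(g))=(\xi_i(g)\otimes u_j)\delta_g$ — viewed now with values in the unit fibre $A\otimes_\mu B$ via the identification — witness the approximation property for $\B_{\alpha\otimes\beta}$. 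Either way the corollary follows at once, which is why it is stated as an immediate consequence.
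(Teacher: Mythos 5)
Your proposal is correct and follows exactly the route the paper intends: the paper derives this corollary immediately from Proposition~\ref{prop:nuclearity-diag-tensor} via the fibrewise isomorphism $\varphi$ between $\B_\alpha\otimes_\mu\B_\beta$ and the semidirect product bundle of the diagonal partial action, with the weak containment and nuclearity statements coming from the same citations to Exel. Your extra remark on the invariance of the approximation property under Fell bundle isomorphisms is a harmless elaboration of what the paper leaves implicit.
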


For us the main application of the above result will be the following corollary. It also explains the main role of the exactness assumption of the underlying group in our main results.
To state the result we need to recall an important characterisation of exactness for discrete groups. This is due to Ozawa~\cite{Ozawa}, motivated by the work of Guentner and Kaminker~\cite{GUENTNER2002411}.

\begin{thm} Let $G$ be a discrete group. The following are equivalent:
\begin{enumerate}
\item[\rm{(i)}] $G$ is exact; 
\item[\rm{(ii)}] $\ell^{\infty}(G)\rtimes_{\tau, \red}G$ is nuclear.
\item[\rm(iii)] The (left) translation action $\tau$ of $G$ on $\ell^\infty(G)$ is (strongly) amenable, or equivalently, it has the approximation property.
\end{enumerate}
\end{thm}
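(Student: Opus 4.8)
The plan is to prove the chain of equivalences by establishing $(\mathrm{i}) \Leftrightarrow (\mathrm{iii})$ first, since this is the more conceptual step, and then note that $(\mathrm{iii}) \Leftrightarrow (\mathrm{ii})$ follows from the general principle that the reduced cross-sectional algebra of a Fell bundle is nuclear iff its unit fibre is nuclear and the bundle has the approximation property (quoted above from \cite{buss2020amenability}*{Corollary~4.23}), applied to the semidirect product bundle of $\tau$ over the nuclear algebra $\ell^\infty(G)$; since $\ell^\infty(G)$ is nuclear, nuclearity of $\ell^\infty(G)\rtimes_{\tau,\red}G$ is then equivalent to the approximation property of $\tau$. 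The equivalence of the approximation property with strong amenability of the translation action is recorded in the discussion following Definition~\ref{defn:AP}. So the real content is $(\mathrm{i}) \Leftrightarrow (\mathrm{ii})$ (or equivalently $(\mathrm{i}) \Leftrightarrow (\mathrm{iii})$), which is precisely Ozawa's theorem.

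For the direction $(\mathrm{iii}) \Rightarrow (\mathrm{i})$, I would argue as follows. Exactness of $G$ is equivalent to the statement that $G$ acts amenably on some compact Hausdorff space, equivalently that $G$ admits a topologically amenable action, equivalently (Guentner--Kaminker, Ozawa) that $G$ is exact as a discrete group in the sense that the reduced group $\Cst$-algebra $\Cst_\red(G)$ is exact, equivalently that $G$ has Property A. Starting from the approximation property of the translation action $\tau$ on $\ell^\infty(G)$: this gives a net of finitely supported functions $\xi_i\colon G\to \ell^\infty(G)$ with $\sup_i\|\sum_g \xi_i(g)^*\xi_i(g)\|<\infty$ and $\sum_h \xi_i(gh)^*\,(\tau_g(f))\,\xi_i(h)\to \tau_g(f)$ pointwise for $f\in\ell^\infty(G)$. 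Evaluating these functions appropriately and using positivity, one extracts the positive-definite-kernel data that characterises Property A / exactness in the sense of Higson--Roe. Concretely, setting $k_i(s,t) = \sum_{g}\xi_i(g)(s)\,\overline{\xi_i(g)(t)}$ or a suitable translate thereof produces finitely-supported positive-type kernels on $G\times G$ approximating $1$ on tubes, which is exactly Property~A.

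For $(\mathrm{i}) \Rightarrow (\mathrm{iii})$, one starts from Property~A of $G$: there is a net of finitely supported, normalised positive-type kernels $k_i\colon G\times G\to [0,1]$ with $k_i(s,s)=1$ and $\mathrm{supp}(k_i)$ contained in a tube, such that $k_i\to 1$ uniformly on tubes. From each such kernel one can manufacture, via a Kolmogorov-type decomposition, functions $\xi_i\colon G\to \ell^\infty(G)$ witnessing the approximation property of $\tau$ (this is the standard translation between Property~A kernels and the Anantharaman-Delaroche approximation property of the translation action; see the proof that amenability of $\tau$ on $\ell^\infty(G)$ is equivalent to exactness). The bound $\sup_i\|\sum_g \xi_i(g)^*\xi_i(g)\| \le 1$ comes from the normalisation $k_i(s,s)=1$, and the approximation condition from $k_i \to 1$ on tubes.

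The main obstacle—though in our setting it is largely a matter of citation rather than genuine work—is the precise bookkeeping translating between the three languages: Ozawa's characterisation via nuclearity of $\ell^\infty(G)\rtimes_{\tau,\red}G$, the Anantharaman-Delaroche approximation property of the translation action, and Exel's approximation property of the associated (global) semidirect-product Fell bundle. Since $\tau$ is a genuine global action, its semidirect product bundle is the usual group-algebra bundle $(\ell^\infty(G)\delta_g)_{g\in G}$, and Exel's approximation property for it coincides with the classical Anantharaman-Delaroche notion; combined with \cite{buss2020amenability}*{Corollary~4.23} and the fact that $\ell^\infty(G)$ is nuclear, the equivalences $(\mathrm{i})\Leftrightarrow(\mathrm{ii})\Leftrightarrow(\mathrm{iii})$ all reduce to Ozawa's theorem \cite{Ozawa} together with \cite{Anantharaman-Delaroche1987}*{Th\'eor\`eme~3.3}. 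I would therefore present the proof as a short assembly of these cited results rather than reproving Property~A machinery from scratch.
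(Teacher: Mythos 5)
Your proposal is correct: the paper itself offers no proof of this theorem, presenting it as Ozawa's result \cite{Ozawa} (motivated by Guentner--Kaminker \cite{GUENTNER2002411}), and your assembly of Ozawa's theorem, the Property~A/positive-type-kernel translation, and the nuclearity criterion of \cite{buss2020amenability}*{Corollary~4.23} applied to the semidirect product bundle over the nuclear algebra $\ell^\infty(G)$ is exactly the intended provenance. The only caveat is that your kernel bookkeeping in (i)$\Leftrightarrow$(iii) is a sketch, but since you ultimately reduce to the cited results rather than relying on it, this matches the paper's treatment.
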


The left translation action $\tau$ used above is defined  by $$\tau_g(f)(s)=f(g^{-1}s)\qquad (s\in G)$$
for~$g\in G$ and $f\in \ell^\infty(G)$. The reduced crossed product $\ell^\infty(G)\rtimes_{\tau,\red}G$ is canonically isomorphic to the \emph{uniform Roe algebra} of~$G$,  the $\Cst$\nb-subalgebra of~$\Bound(\ell^{2}(G))$ generated by $\ell^{\infty}(G)$ and the left regular representation $\lambda\colon G\to\Bound(\ell^2(G))$ of~$G$.

\begin{cor}\label{cor:exact-nuclearity-diag}
If $G$ is exact, then for every partial action $(\{B_g\}_{g\in G},\{\beta_g\}_{g\in G})$  of $G$ on a \cstar{}algebra $B$, the diagonal partial action 
$$(\{\ell^\infty(G)\otimes B_g\}_{g\in G},\{\tau_g\otimes \beta_g\}_{g\in G})$$ 
of $G$ on $\ell^\infty(G)\otimes B$ has the approximation property. 
Hence 
$$(\ell^\infty(G)\otimes B)\rtimes_{\tau\otimes\beta}G=(\ell^\infty(G)\otimes B)\rtimes_{\tau\otimes\beta,\red}G.$$ 
If, in addition, $B$ is nuclear, then so is the partial crossed product 
$(\ell^\infty(G)\otimes B)\rtimes_{\tau\otimes\beta}G$.
\end{cor}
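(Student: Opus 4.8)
The plan is to deduce Corollary~\ref{cor:exact-nuclearity-diag} directly from Corollary~\ref{cor:nuclearity-diag} by taking the ``reference'' partial action to be the \emph{global} translation action $\tau\colon G\to\mathrm{Aut}(\ell^\infty(G))$. First I would invoke Ozawa's characterisation (the theorem preceding the statement): since $G$ is exact, the translation action $\tau$ on $\ell^\infty(G)$ has Exel's approximation property — equivalently, its associated semidirect product bundle $\Hilm[B]_\tau$ does. Note that $\tau$ is a genuine (global) action, hence in particular a partial action with all ideals equal to $\ell^\infty(G)$, so it fits the hypotheses of Corollary~\ref{cor:nuclearity-diag}.

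Next I would apply Corollary~\ref{cor:nuclearity-diag} with $\alpha=\tau$ on $A=\ell^\infty(G)$ and $\beta=(\{B_g\}_{g\in G},\{\beta_g\}_{g\in G})$ the given partial action on $B$. Because $\tau$ has the approximation property, the corollary tells us that the diagonal partial action
$$(\{\ell^\infty(G)\otimes_\mu B_g\}_{g\in G},\{\tau_g\otimes\beta_g\}_{g\in G})$$
also has the approximation property, and consequently its full and reduced partial crossed products coincide:
$$(\ell^\infty(G)\otimes_\mu B)\rtimes_{\tau\otimes\beta}G=(\ell^\infty(G)\otimes_\mu B)\rtimes_{\tau\otimes\beta,\red}G.$$
Here one small point to address is that $\ell^\infty(G)$ is nuclear, so the minimal and maximal tensor products with $B$ agree and there is no ambiguity in writing $\ell^\infty(G)\otimes B$; I would remark this so the statement reads cleanly with the unadorned $\otimes$. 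The final sentence — if $B$ is nuclear then $(\ell^\infty(G)\otimes B)\rtimes_{\tau\otimes\beta}G$ is nuclear — is then immediate from the ``in addition'' clause of Corollary~\ref{cor:nuclearity-diag}, using that $\ell^\infty(G)$ is always nuclear.

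There is essentially no obstacle here: this is a pure specialisation of the preceding corollary, with the only ``content'' being the identification of exactness with the approximation property of $\tau$, which is exactly Ozawa's theorem quoted just above the statement. The proof is therefore three or four lines. Below is the proof.

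\begin{proof}
Since $G$ is exact, the translation action $\tau$ of $G$ on $\ell^\infty(G)$ has the approximation property by Ozawa's characterisation of exactness recalled above. Viewing $\tau$ as a partial action (all ideals equal to $\ell^\infty(G)$), Corollary~\ref{cor:nuclearity-diag} applied to $\alpha=\tau$ and $\beta=(\{B_g\}_{g\in G},\{\beta_g\}_{g\in G})$ shows that the diagonal partial action $(\{\ell^\infty(G)\otimes B_g\}_{g\in G},\{\tau_g\otimes\beta_g\}_{g\in G})$ has the approximation property; note that $\ell^\infty(G)$ is nuclear, so the minimal and maximal tensor products coincide and we write $\ell^\infty(G)\otimes B$ without ambiguity. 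Hence
$$(\ell^\infty(G)\otimes B)\rtimes_{\tau\otimes\beta}G=(\ell^\infty(G)\otimes B)\rtimes_{\tau\otimes\beta,\red}G.$$
If moreover $B$ is nuclear, then both tensor factors $\ell^\infty(G)$ and $B$ are nuclear, so the last assertion of Corollary~\ref{cor:nuclearity-diag} gives that $(\ell^\infty(G)\otimes B)\rtimes_{\tau\otimes\beta}G$ is nuclear.
\end{proof}
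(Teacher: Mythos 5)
Your proposal is correct and is essentially the argument the paper intends: the corollary is stated as a direct specialisation of Corollary~\ref{cor:nuclearity-diag} to $\alpha=\tau$ on $\ell^\infty(G)$, with Ozawa's characterisation of exactness supplying the approximation property of $\tau$ and nuclearity of $\ell^\infty(G)$ removing any min/max ambiguity and feeding the final nuclearity clause. Nothing further is needed.
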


\subsection{Partial actions on commutative \texorpdfstring{$\mathrm{C}^*$}{C*}-algebras}\label{sec:partial-actions-commutative} 

Recall that a representation $\pi=\{\pi_g\}_{g\in G}$ of a Fell bundle in a $\Cst$\nb-algebra~$C$ is said to be \emph{faithful} if $\pi_e\colon B_e\to C$ is injective. In this case, each $\pi_g\colon B_g\to C$ is isometric. We begin by proving a technical result on certain grading-preserving completely positive maps. This will be an important tool in our proofs. In what follows we will shorten the notation and write cp (resp. ccp) for (contractive) completely positive maps.

\begin{prop}\label{prop:ccp-reduced} Let $\Hilm[B]=(B_g)_{g\in G}$ and $\Hilm[C]=(C_g)_{g\in G}$ be Fell bundles. Let $\pi=\{\pi_g\}_{g\in G}$ be a faithful representation of~$\Hilm[C]$ on a Hilbert space~$\Hilm[H]$. Suppose that $\phi\colon\Cst(\Hilm[B])\to\Bound(\Hilm[H])$ is a cp map such that $\phi(B_g)\subseteq \pi_g(C_g)$ for all~$g\in G$. Then there is a cp map between reduced cross-sectional $\Cst$\nb-algebras $\phi'\colon \Cst_r(\Hilm[B])\to\Cst_r(\Hilm[C])$ with $\|\phi'\|=\|\phi\|$ such that $\pi\circ \phi'|_{\Hilm[B]} = \phi.$
\begin{proof} As in the statement, let $\pi=\{\pi_g\}_{g\in G}$ be a faithful representation of~$\Hilm[C]$ on a Hilbert space~$\Hilm[H]$ and $\phi\colon\Cst(\Hilm[B])\to\Bound(\Hilm[H])$ a cp map satisfying $\phi(B_g)\subseteq\pi_g(C_g)$ for all~$g\in G$. By Stinespring's Dilation Theorem for cp maps (see \cite[Theorem~1]{Stinespring}), there exist a Hilbert space $\Hilm[K]$, a \Star homomorphism $\hat{\phi}\colon \Cst(\Hilm[B])\to\Bound(\Hilm[K])$ and an operator $V\colon \Hilm[H]\to\Hilm[K]$ such that $$\phi(b)=V^*\hat{\phi}(b)V$$ for all $b\in\Cst(\Hilm[B])$. Let us still denote by $\hat{\phi}$ the corresponding representation of $\Hilm[B]$ in~$\Bound(\Hilm[K])$ and let $\lambda\colon G\to \Bound(\ell^2(G))$ be the left regular representation. By Fell's absorption principle for Fell bundles (see \cite[Proposition~18.4]{Exel:Partial_dynamical}), the integrated forms of $\lambda\otimes\hat{\phi}=\{\lambda_g\otimes\hat{\phi}_g\}_{g\in G}$ and $\lambda\otimes \pi=\{\lambda_g\otimes \pi_g\}_{g\in G}$ factor through the reduced cross-sectional $\Cst$\nb-algebras $\Cst_r(\Hilm[B])$ and $\Cst_r(\Hilm[C])$, respectively. So let $\varphi_{\Hilm[B]}\colon\Cst_r(\Hilm[B])\to\Bound(\ell^2(G)\otimes\Hilm[K])$ and $ \varphi_{\Hilm[C]}\colon\Cst_r(\Hilm[C])\to\Bound(\ell^2(G)\otimes\Hilm[H])$ be the induced \Star homomorphisms. Then $\varphi_{\Hilm[C]}$ is faithful because $\pi=\{\pi_g\}_{g\in G}$ is so. We thus obtain a cp map $\phi'\colon\Cst_r(\Hilm[B])\to\Bound(\ell^2(G)\otimes\Hilm[H])$ by composing the cp map 
\begin{equation*}
\begin{aligned}
\Bound(\ell^2(G)\otimes \Hilm[K])&\to\Bound(\ell^2(G)\otimes\Hilm[H])\\
T&\mapsto (1\otimes V^*) T (1\otimes V)
\end{aligned}
\end{equation*} with $\varphi_{\Hilm[B]}$. Notice that $$\|\phi'\|=\|(1\otimes V)^*(1\otimes V)\|=\|V^*V\|=\|\phi\|.$$ Because $(\lambda_g\otimes\id_{\Hilm[K]})(1\otimes V)=(1\otimes V)(\lambda_g\otimes\id_{\Hilm[H]})$ for all $g\in G$, the range of $\phi'$ is contained in~$\varphi_{\Hilm[C]}(\Cst_r(\Hilm[C]))$. So we may view $\phi'$ as a cp map from $\Cst_r(\Hilm[B])$ into $\Cst_r(\Hilm[C])$ using that $\varphi_{\Hilm[C]}\colon\Cst_r(\Hilm[C])\to\Bound(\ell^2(G)\otimes \Hilm[H])$ is faithful. With this identification, we have $\pi\circ \phi'|_{\Hilm[B]} = \phi$ as wished. 
\end{proof}
\end{prop}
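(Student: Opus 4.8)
The plan is to use Stinespring dilation to replace the completely positive map~$\phi$ by a $^*$-homomorphism, then apply Fell's absorption principle to pass to the reduced level, and finally compress back down. First I would invoke Stinespring's Dilation Theorem to obtain a Hilbert space~$\Hilm[K]$, a $^*$-homomorphism $\hat\phi\colon\Cst(\Hilm[B])\to\Bound(\Hilm[K])$ and an operator $V\colon\Hilm[H]\to\Hilm[K]$ with $\phi=V^*\hat\phi(\cdot)V$. Restricting~$\hat\phi$ to the fibres gives a representation of the Fell bundle~$\Hilm[B]$ in~$\Bound(\Hilm[K])$. Next I would tensor with the left regular representation $\lambda\colon G\to\Bound(\ell^2(G))$: by Fell's absorption principle for Fell bundles (\cite[Proposition~18.4]{Exel:Partial_dynamical}), the integrated form of $\lambda\otimes\hat\phi$ descends to a $^*$-homomorphism $\varphi_{\Hilm[B]}\colon\Cst_r(\Hilm[B])\to\Bound(\ell^2(G)\otimes\Hilm[K])$, and similarly $\lambda\otimes\pi$ descends to a $^*$-homomorphism $\varphi_{\Hilm[C]}\colon\Cst_r(\Hilm[C])\to\Bound(\ell^2(G)\otimes\Hilm[H])$ which is moreover \emph{faithful} because~$\pi$ is faithful.

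Then I would define $\phi'$ as the composite of $\varphi_{\Hilm[B]}$ with the compression $T\mapsto(1\otimes V^*)T(1\otimes V)$ from $\Bound(\ell^2(G)\otimes\Hilm[K])$ to $\Bound(\ell^2(G)\otimes\Hilm[H])$. This is manifestly completely positive, and its norm is $\|(1\otimes V)^*(1\otimes V)\|=\|V^*V\|=\|\phi\|$. The crucial point is that the range of~$\phi'$ lands inside~$\varphi_{\Hilm[C]}(\Cst_r(\Hilm[C]))$: this follows because $\lambda_g\otimes\id$ commutes with $1\otimes V$ (the tensor legs are independent), so conjugation by $1\otimes V$ intertwines the canonical conditional expectations/gradings, and on the fibre~$B_g$ one uses the hypothesis $\phi(B_g)\subseteq\pi_g(C_g)$ to see that $\phi'(B_g)$ is sent into the copy of~$C_g$ sitting inside~$\Cst_r(\Hilm[C])$. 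Using faithfulness of~$\varphi_{\Hilm[C]}$, I can then regard~$\phi'$ as a cp map $\Cst_r(\Hilm[B])\to\Cst_r(\Hilm[C])$, and by construction $\pi\circ\phi'|_{\Hilm[B]}=\phi$ on each fibre, hence the stated compatibility.

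I expect the main obstacle to be the verification that $\phi'$ really takes values in the reduced cross-sectional algebra of~$\Hilm[C]$ rather than merely in $\Bound(\ell^2(G)\otimes\Hilm[H])$. The naive issue is that a compression of something in $\varphi_{\Hilm[B]}(\Cst_r(\Hilm[B]))$ need not a priori lie in $\varphi_{\Hilm[C]}(\Cst_r(\Hilm[C]))$; one must exploit that both reduced algebras are generated, inside their respective $\Bound(\ell^2(G)\otimes-)$, by the same regular-representation data $\lambda_g\otimes(-)$ in the~$G$-variable, and that $1\otimes V$ acts trivially on that variable. Concretely, it suffices to check the containment on the dense $^*$-subalgebra of finitely supported sections, where $\varphi_{\Hilm[B]}$ sends a section $b=\sum_g b_g\delta_g$ to $\sum_g\lambda_g\otimes\hat\phi(b_g)$; compressing gives $\sum_g\lambda_g\otimes(V^*\hat\phi(b_g)V)=\sum_g\lambda_g\otimes\phi(b_g)$, and since $\phi(b_g)\in\pi_g(C_g)$ this equals $\varphi_{\Hilm[C]}$ applied to the corresponding section of~$\Hilm[C]$; one then passes to the closure. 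Everything else — complete positivity, the norm computation, and the fibrewise identity — is routine.
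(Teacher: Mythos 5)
Your proposal is correct and follows essentially the same route as the paper's proof: Stinespring dilation, Fell's absorption principle to get $\varphi_{\Hilm[B]}$ and the faithful $\varphi_{\Hilm[C]}$, compression by $1\otimes V$, and the commutation of $\lambda_g\otimes\id$ with $1\otimes V$ to see the range lands in $\varphi_{\Hilm[C]}(\Cst_r(\Hilm[C]))$. Your explicit check on finitely supported sections, where the compression of $\sum_g\lambda_g\otimes\hat\phi(b_g)$ becomes $\sum_g\lambda_g\otimes\phi(b_g)=\varphi_{\Hilm[C]}$ of a section of $\Hilm[C]$, is just a slightly more detailed version of the step the paper states in one line.
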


\begin{rem} An analogue of Proposition \ref{prop:ccp-reduced} above also holds with the full cross-sectional $\Cst$\nb-algebras in place of the reduced ones. Our proof requires a theory of cp maps between Fell bundles and a version of Stinespring's Dilation Theorem in this context. We include these results in Appendix~\ref{sec:full-Stinespring}.
\end{rem}

We briefly recall a useful description of the multiplicative domain of a ccp map. Let $\varphi\colon A\to \Bound(\Hilm[H])$ be a ccp map. The \emph{multiplicative domain} of~$\varphi$ is the set $$\{a\in A\mid \varphi(ab)=\varphi(a)\varphi(b)\text{ and }\varphi(ba)=\varphi(b)\varphi(a), \forall b\in A\}.$$ This is clearly a closed subspace of~$A$. In fact, it is also a $\Cst$\nb-subalgebra of~$A$ and it coincides with $$A_{\varphi}\coloneqq\{a\in A\mid \varphi(a)^*\varphi(a)=\varphi(a^*a)\text{ and }\varphi(a)\varphi(a)^*=\varphi(aa^*)\}.$$ See \cite[Proposition~1.5.7]{Brown-Ozawa:Approximations}. A crucial step in the proof of our main theorem, Theorem~\ref{thm:maintheorem} below, is the construction of a certain equivariant conditional expectation $\ell^\infty(G)\otimes A''\to A''$. As for global actions, the exactness hypothesis is not needed in this part and so we prove the existence of such a map in the next lemma.

\begin{lem} Let $\alpha=(\{A_g\}_{g\in G},\{\alpha_g\}_{g\in G})$ be a partial action of a discrete group~$G$ on a commutative $\Cst$\nb-algebra~$A$. Suppose that~$\alpha$ satisfies the weak containment property. Then there is a ccp map $$\phi\colon  (\ell^{\infty}(G)\otimes A'')\rtimes_{\tau\otimes\alpha'',\red} G\to A''\rtimes_{\alpha'',\red}G$$ such that $\phi((\ell^\infty(G)\otimes A_g'')\delta_g)\subset A''_g\delta_g$ and $\phi((1\otimes a)\delta_g)=a\delta_g$ for all $g\in G$ and $a\in A_g''$. \label{lem:bidual-equiv-cp} 
\begin{proof} The inclusion $A\to \ell^\infty(G)\otimes A,\ a\mapsto 1\otimes a,$ induces an inclusion of reduced crossed products $A\rtimes_{\alpha,\red}G\into (\ell^\infty(G)\otimes A)\rtimes_{\tau\otimes \alpha,\red}G, $ that is in fact an inclusion of the full crossed product $A\rtimes_\alpha G$ by the hypothesis.
We also have the canonical inclusion $A\rtimes_{\alpha,\red} G\into A''\rtimes_{\alpha'',\red}G $ and a canonical \Star homomorphism $A\rtimes_\alpha G\to A''\rtimes_{\alpha''}G$ mapping $a\delta_g\in A_g\delta_g$ to its image $a\delta_g$ in $A_g''\delta_g$ which is in fact an inclusion because every covariant representation $(\pi,u)$ for $\alpha$ gives the covariant pair $(\pi'',u)$ of $\alpha''.$ Similarly, $(\ell^\infty(G)\otimes A)\rtimes_{\tau\otimes\alpha}G$ embeds in $(\ell^\infty(G)\otimes A'')\rtimes_{\tau\otimes\alpha''}G$ via a \Star homomorphism that sends $c\delta_g$ to $c\delta_g$ since every nondegenerate representation of~$\ell^\infty(G)\otimes A$ has the form $\pi_1\times\pi_2$, where $\pi_1$ and $\pi_2$ are nondegenerate representations of~$\ell^\infty(G)$ and $A$, respectively, with commuting ranges (see \cite[Theorem~3.2.6]{Brown-Ozawa:Approximations}).
 
Consider a faithful nondegenerate representation~$\tilde{\pi}$ of~$A\rtimes_{\alpha}G$ on a Hilbert space~$\Hilm[K]$. There is a unique covariant representation $(\pi, w)$ of $\alpha=(\{A_g\}_{g\in G},\{\alpha_g\}_{g\in G})$ so that $\tilde{\pi}=\pi\times w$ \cite[Theorem~13.2]{Exel:Partial_dynamical}. Let $\lambda\colon G\to\Bound(\ell^2(G))$ be the left regular representation. By Fell's absorption principle for Fell bundles \cite[Proposition~18.4]{Exel:Partial_dynamical}, the integrated form $(1\otimes\pi)\times( \lambda\otimes w)$ induces a faithful representation of $A\rtimes_{\alpha,\red} G$ in~$\Bound(\ell^2(G)\otimes\Hilm[K])$. Using the assumption that the full and reduced partial crossed products built from~$\alpha$ coincide, we deduce that $(1\otimes\pi)\times (\lambda\otimes w)$ is a faithful representation of~$A\rtimes_{\alpha} G$.

Now let $(A'', \Hilm[H], J, P)$ be the Haagerup standard form of the enveloping von Neumann algebra~$A''$ of~$A$ and let $\iota\colon A''\to\Bound(\Hilm[H])$ be the inclusion. By Proposition~\ref{prop:cov-standard}, there is a \Star partial representation $v\colon G\to \Bound(\Hilm[H])$ so that $(\iota, v)$ is a covariant pair for the bidual partial action $\alpha''=(\{A_g''\}_{g\in G}, \{\alpha_g''\}_{g\in G})$. Let $\iota\times v\colon A''\rtimes_{\alpha''} G\to\Bound(\Hilm[H])$ be the integrated form of~$(\iota, v)$. By Arveson's extension theorem, there is a ccp map $\varphi'\colon \Bound(\ell^2(G)\otimes\Hilm[K])\to\Bound(\Hilm[H])$ such that $$\varphi'\circ ((1\otimes\pi)\times(\lambda\otimes w))=\iota\times v|_{A\rtimes_\alpha G}.$$

Consider the representation of the partial crossed product $(\ell^{\infty}(G)\otimes A)\rtimes_{\tau\otimes\alpha} G$ in~$\Bound(\ell^2(G)\otimes \Hilm[K])$ determined by the covariant pair $(M\otimes\pi, \lambda\otimes w)$, where $M$ denotes the multiplication representation of $\ell^\infty(G)$ into $\Bound(\ell^2(G))$. So the composite $$(\ell^{\infty}(G)\otimes A)\rtimes_{\tau\otimes\alpha} G\xrightarrow{(M\otimes\pi)\times(\lambda\otimes w)}\Bound(\ell^2(G)\otimes\Hilm[K])\xrightarrow{\varphi'}\Bound(\Hilm[H])$$ provides a ccp map from $(\ell^{\infty}(G)\otimes A)\rtimes_{\tau\otimes\alpha} G$
into~$\Bound(\Hilm[H])$,  denoted by~$\varphi$,  that equals $\iota\times v$ on $A\rtimes_\alpha G$. In particular, by the observation before the statement of the lemma, $A\rtimes_\alpha G$ lies in the multiplicative domain of~$\varphi$.  We will use this to show that the image of $(\ell^{\infty}(G)\otimes A_g)\delta_g$ under~$\varphi$ is contained in~$\iota(A_g'')v_g$.

Let $c\in \ell^\infty(G)\otimes A$ and $m\in A''$. Let $(a_j)_{j\in J}\subset\iota(A)$ be a net converging strongly to~$m$ and let $\xi, \eta\in\Hilm[H]$. Since $(\iota(a_j))_{j\in J}$ converges weakly to $\iota(m)$, we have
 \begin{equation*}
 \begin{aligned}
 \braket{\varphi(c\delta_e)\iota(m)\xi}{\eta}
    &=\lim_{\substack{j}}\braket{\varphi(c\delta_e)\iota(a_j)\xi}{\eta}
      =\lim_{\substack{j}}\braket{\varphi(ca_{j}\delta_e)\xi}{\eta}\\
    &=\lim_{\substack{j}}\braket{\varphi(a_{j} c\delta_e)\xi}{\eta}
     =\lim_{\substack{j}}\braket{\iota(a_{j} )\varphi(c\delta_e)\xi}{\eta}\\
    &=\braket{\iota(m)\varphi(c\delta_e)\xi}{\eta}.
\end{aligned}
\end{equation*} This implies that $\varphi(c\delta_e)$ belongs to the commutant $\iota(A'')'$.
But $A''$ is commutative because $A$ is, thus it follows from conditions (1) and (2) of Theorem~\ref{thm:standardform}
that $\iota(A'')$ is a maximal abelian von Neumann algebra in~$\Bound(\Hilm[H])$. This entails $\varphi(c\delta_e)\in \iota(A'')$ and so $\varphi(\ell^{\infty}(G)\otimes A)\subseteq A''$.

Take an approximate identity $(u_{j})_{j\in J}$ for~$A_g$ and let $c\in \ell^\infty(G)\otimes A_g$. Then
\begin{equation*}
\begin{aligned}
\varphi(c\delta_g)
  & = \lim_{\substack{j}}\varphi(c\delta_e (1\otimes u_{j})\delta_g)
    = \lim_{\substack{j}}\varphi(c\delta_e)\iota(u_j)v_g\\
  & = \lim_{\substack{j}}\varphi(c(1\otimes u_{j})\delta_e)v_g
    = \varphi(c\delta_e)v_g. 
\end{aligned}
\end{equation*}
Since we have $\varphi(c\delta_e)v_g\in \iota(A_g'')v_g,$ we conclude that $\varphi((\ell^\infty(G)\otimes A_g)\delta_g)\subseteq \iota(A_g'')v_g$. 

We claim that $\varphi$ extends to a ccp map $\varphi''\colon (\ell^\infty(G)\otimes A'')\rtimes_{\tau\otimes\alpha''}G\to\Bound(\Hilm[H])$ so that $$\varphi''((1\otimes a)\delta_g)=(\iota\times v)(a\delta_g)=\iota(a)v_g$$  for all $g\in G$ and $a\in A_g''$. To prove this,  we take a Stinespring dilation $(\tilde{\pi},\tilde{\Hilm[H]}, V)$ of~$\varphi$. We may assume that $(\tilde{\pi},\tilde{\Hilm[H]}, V)$ is a minimal dilation in the sense that $\tilde{\pi}((\ell^\infty(G)\otimes A)\rtimes_{\tau\otimes\alpha}G)V\Hilm[H]$ is dense in~$\tilde{\Hilm[H]}.$ Thus $\tilde{\pi}$ is nondegenerate and there is a unique covariant representation 
$(\pi, w)$ of~$\tau\otimes\alpha$ in~$\Bound(\tilde{\Hilm[H]})$ so that $\tilde{\pi}=\pi\times w$. Because~$\pi$ is nondegenerate, there are nondegenerate \Star homomorphisms $$\pi_1\colon\ell^\infty(G)\to\Bound(\tilde{\Hilm[H]})\qquad \text{ and }\qquad\pi_2\colon A\to\Bound(\tilde{\Hilm[H]}),$$ with commuting ranges, such that $\pi(f\otimes a)=\pi_1(f)\pi_2(a)$ for all~$f\in\ell^\infty(G)$ and~$a\in  A$. 

Now let $\pi_2''\colon A''\to\Bound(\tilde{\Hilm[H]})$ be the unique normal extension of~$\pi_2$.  Then $(\pi_1\times\pi_2'',w)$ is a covariant representation of~$\tau\otimes\alpha''$. Let $\varphi''\colon (\ell^\infty(G)\otimes A'')\rtimes_{\tau\otimes\alpha''}G\to\Bound(\Hilm[H])$ be given by the composite of the ccp map
\begin{equation*}
\begin{aligned}
\Bound( \tilde{\Hilm[H]})&\to\Bound(\Hilm[H])\\
T&\mapsto  V^*T  V
\end{aligned}
\end{equation*} with $(\pi_1\times\pi_2'')\times w$. So~$\varphi''$ extends $\varphi$ and $\varphi''=\iota\times v$ on $(1\otimes A''_g)\delta_g$ for all $g\in G$ because~$\pi_2''$ and~$\iota$ are normal representations of~$A''$. This proves our claim.

It follows that  $\varphi''((\ell^\infty(G)\otimes A''_g)\delta_g)\subset \iota(A''_g)v_g$ for all~$g\in G$. Combining this with Proposition~\ref{prop:ccp-reduced}, we deduce that there is a ccp map $$\phi\colon  (\ell^{\infty}(G)\otimes A'')\rtimes_{\tau\otimes\alpha'',\red} G\to A''\rtimes_{\alpha'',\red}G$$ such that $\phi((1\otimes a)\delta_g)=a\delta_g$ for all $g\in G$ and $a\in A_g''$. This completes the proof of the lemma.
\end{proof}
\end{lem}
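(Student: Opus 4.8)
The plan is to transport a covariant representation of the bidual partial action~$\alpha''$, built on a Haagerup standard form of~$A''$, back to a concrete realisation of $A\rtimes_\alpha G$, and then to descend to the reduced crossed products by means of Proposition~\ref{prop:ccp-reduced}. Concretely, I would first fix a faithful nondegenerate representation $\tilde\pi=\pi\times w$ of $A\rtimes_\alpha G$ on a Hilbert space~$\Hilm[K]$; by Fell's absorption principle for Fell bundles (\cite[Proposition~18.4]{Exel:Partial_dynamical}) the integrated form $(1\otimes\pi)\times(\lambda\otimes w)$ gives a faithful representation of $A\rtimes_{\alpha,\red}G$ on $\ell^2(G)\otimes\Hilm[K]$, and the weak containment hypothesis makes it faithful on $A\rtimes_\alpha G$ as well. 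On the other side I would take the Haagerup standard form $(A'',\Hilm[H],J,P)$ and the covariant pair $(\iota,v)$ for~$\alpha''$ furnished by Proposition~\ref{prop:cov-standard}, with $\iota\colon A''\hookrightarrow\Bound(\Hilm[H])$ the inclusion, together with its integrated form $\iota\times v\colon A''\rtimes_{\alpha''}G\to\Bound(\Hilm[H])$. Arveson's extension theorem then yields a ccp map $\varphi'\colon\Bound(\ell^2(G)\otimes\Hilm[K])\to\Bound(\Hilm[H])$ agreeing with $\iota\times v$ on the copy of $A\rtimes_\alpha G$, and composing $\varphi'$ with the integrated form of the covariant pair $(M\otimes\pi,\lambda\otimes w)$ — where $M$ is the multiplication representation of $\ell^\infty(G)$ on $\ell^2(G)$ — produces a ccp map $\varphi$ on $(\ell^\infty(G)\otimes A)\rtimes_{\tau\otimes\alpha}G$ that restricts to $\iota\times v$ on $A\rtimes_\alpha G$. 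In particular, $A\rtimes_\alpha G$ sits inside the multiplicative domain of~$\varphi$.

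The core of the argument is to pin down the range of~$\varphi$, and this is the only place commutativity of~$A$ is used. For $c\in\ell^\infty(G)\otimes A$, approximating an arbitrary $m\in A''$ strongly (hence weakly) by a net from $\iota(A)$ and using that $\iota(A)\subseteq A\rtimes_\alpha G$ lies in the multiplicative domain of~$\varphi$, one passes to the limit in matrix coefficients to see that $\varphi(c\delta_e)$ commutes with $\iota(A'')$. Since $A$ — hence $A''$ — is commutative, conditions~(1) and~(2) of Theorem~\ref{thm:standardform} force $\iota(A'')$ to be a \emph{maximal} abelian von Neumann subalgebra of $\Bound(\Hilm[H])$, so $\varphi(c\delta_e)\in\iota(A'')$, i.e. $\varphi(\ell^\infty(G)\otimes A)\subseteq A''$. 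Inserting an approximate unit of~$A_g$ and using $\varphi(c\delta_g)=\varphi(c\delta_e)v_g$ then upgrades this to $\varphi\bigl((\ell^\infty(G)\otimes A_g)\delta_g\bigr)\subseteq\iota(A_g'')v_g$ for every $g\in G$.

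It remains to enlarge the first tensor factor to $A''$ and then descend. Taking a minimal Stinespring dilation of~$\varphi$ with dilation isometry~$V$, the associated nondegenerate covariant pair of $\tau\otimes\alpha$ on the dilation space $\tilde{\Hilm[H]}$ consists of a partial representation of~$G$ together with a nondegenerate representation of $\ell^\infty(G)\otimes A$ which, by \cite[Theorem~3.2.6]{Brown-Ozawa:Approximations}, factors as a product of a representation of~$\ell^\infty(G)$ and a representation~$\rho$ of~$A$ with commuting ranges. Replacing $\rho$ by its normal extension $\rho''\colon A''\to\Bound(\tilde{\Hilm[H]})$ produces a covariant pair of $\tau\otimes\alpha''$, and compressing its integrated form by~$V$ gives a ccp map $\varphi''\colon(\ell^\infty(G)\otimes A'')\rtimes_{\tau\otimes\alpha''}G\to\Bound(\Hilm[H])$ extending~$\varphi$. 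Normality of $\iota$ and $\rho''$ forces $\varphi''=\iota\times v$ on $(1\otimes A_g'')\delta_g$, and taking strong limits along $\iota(A_g)\subseteq\iota(A_g'')$ gives $\varphi''\bigl((\ell^\infty(G)\otimes A_g'')\delta_g\bigr)\subseteq\iota(A_g'')v_g$ for all~$g$. Finally, applying Proposition~\ref{prop:ccp-reduced} to~$\varphi''$, with $\Hilm[B]$ the semidirect product bundle of $\tau\otimes\alpha''$, $\Hilm[C]$ that of $\alpha''$, and the faithful bundle representation induced by $(\iota,v)$, yields the desired ccp map $\phi\colon(\ell^\infty(G)\otimes A'')\rtimes_{\tau\otimes\alpha'',\red}G\to A''\rtimes_{\alpha'',\red}G$ with $\phi\bigl((\ell^\infty(G)\otimes A_g'')\delta_g\bigr)\subseteq A_g''\delta_g$ and $\phi((1\otimes a)\delta_g)=a\delta_g$.

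I expect the subtlest points to be twofold. First, deducing maximal abelianness of $\iota(A'')$ purely from the standard-form axioms (1) and (2) of Theorem~\ref{thm:standardform}, which is where commutativity is genuinely exploited. Second, the normalisation step of the previous paragraph: one must arrange the Stinespring dilation of~$\varphi$ so that its $A$\nb-part alone can be extended normally to $A''$ while the $\ell^\infty(G)$\nb-part and the partial representation of~$G$ are left untouched, and then check that the extended map $\varphi''$ still honours the grading $(\ell^\infty(G)\otimes A_g'')\delta_g\mapsto\iota(A_g'')v_g$, since this grading condition is precisely what is required to feed $\varphi''$ into Proposition~\ref{prop:ccp-reduced}.
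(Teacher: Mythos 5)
Your proposal follows essentially the same route as the paper's own proof: the same chain of Fell absorption, Arveson extension, the multiplicative-domain and maximal-abelianness argument in the standard form, the minimal Stinespring dilation with the factorisation from \cite[Theorem~3.2.6]{Brown-Ozawa:Approximations} and normal extension of the $A$\nb-part, and finally Proposition~\ref{prop:ccp-reduced} to descend to the reduced crossed products. The two subtleties you flag are exactly the ones the paper handles, so no further changes are needed.
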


\begin{cor} If $G$ is an exact discrete group and $\alpha$ is a partial action of $G$ on a commutative \cstar{}algebra~$A$ such that $A\rtimes_\alpha G=A\rtimes_{\alpha,\red}G$, then $A''\rtimes_{\alpha'',\red}G$ is nuclear.
\label{cor:bidual-commut}
\begin{proof} Let $\phi\colon  (\ell^{\infty}(G)\otimes A'')\rtimes_{\tau\otimes\alpha'',\red} G\to A''\rtimes_{\alpha'',\red}G$ be the ccp map built in Lemma~\ref{lem:bidual-equiv-cp}. We may regard~$\phi$ as a conditional expectation onto the copy of $A''\rtimes_{\alpha'',\red}G$ in $( \ell^\infty(G)\otimes A'')\rtimes_{\tau\otimes\alpha'',\red}G$. This latter $\Cst$\nb-algebra is nuclear by Corollary~\ref{cor:exact-nuclearity-diag}, since $G$ is exact and $A''$ is commutative. We conclude that $A''\rtimes_{\alpha'',\red}G$ is nuclear.
\end{proof}
\end{cor}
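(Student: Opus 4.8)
The plan is to assemble the corollary from Lemma~\ref{lem:bidual-equiv-cp} and Corollary~\ref{cor:exact-nuclearity-diag}; essentially all of the work has already been carried out, so what remains is bookkeeping. First I would record that the inclusion $A''\hookrightarrow\ell^\infty(G)\otimes A''$, $a\mapsto 1\otimes a$, is equivariant in the sense that $(\tau_g\otimes\alpha_g'')(1\otimes a)=1\otimes\alpha_g''(a)$ for $a\in A_g''$; hence it identifies the semidirect product bundle of $\alpha''$ with a sub-Fell-bundle of that of $\tau\otimes\alpha''$, and by functoriality of reduced cross-sectional $\Cst$\nb-algebras it induces an inclusion $A''\rtimes_{\alpha'',\red}G\hookrightarrow(\ell^\infty(G)\otimes A'')\rtimes_{\tau\otimes\alpha'',\red}G$ sending $a\delta_g$ to $(1\otimes a)\delta_g$.

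Next I would invoke the ccp map $\phi\colon(\ell^\infty(G)\otimes A'')\rtimes_{\tau\otimes\alpha'',\red}G\to A''\rtimes_{\alpha'',\red}G$ of Lemma~\ref{lem:bidual-equiv-cp}, which satisfies $\phi((1\otimes a)\delta_g)=a\delta_g$ and thus restricts to the identity on the copy of $A''\rtimes_{\alpha'',\red}G$ fixed above; by Tomiyama's theorem $\phi$ may therefore be regarded as a conditional expectation onto that copy. On the other hand, since $A$ is commutative so is $A''$, and commutative $\Cst$\nb-algebras are nuclear; as $G$ is exact, Corollary~\ref{cor:exact-nuclearity-diag} applied with $B=A''$ and $\beta=\alpha''$ shows that the diagonal partial action $\tau\otimes\alpha''$ has the approximation property — whence its full and reduced partial crossed products agree — and that the (reduced) partial crossed product $(\ell^\infty(G)\otimes A'')\rtimes_{\tau\otimes\alpha'',\red}G$ is nuclear.

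Finally I would use the standard permanence fact that the range of a conditional expectation from a nuclear $\Cst$\nb-algebra is again nuclear: composing ccp approximating nets for the ambient algebra with the inclusion above and with $\phi$ produces such nets for $A''\rtimes_{\alpha'',\red}G$, which is therefore nuclear. The statement carries essentially no obstacle of its own — the genuine difficulty is concentrated in Lemma~\ref{lem:bidual-equiv-cp} and Corollary~\ref{cor:exact-nuclearity-diag} — and the only points that need a little care here are the appeal to Tomiyama's theorem to upgrade $\phi$ to a conditional expectation and the permanence of nuclearity under conditional expectations.
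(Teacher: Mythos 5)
Your argument is correct and follows exactly the paper's route: regard the ccp map $\phi$ of Lemma~\ref{lem:bidual-equiv-cp} as a conditional expectation onto the canonical copy of $A''\rtimes_{\alpha'',\red}G$ inside $(\ell^\infty(G)\otimes A'')\rtimes_{\tau\otimes\alpha'',\red}G$, note that the latter is nuclear by Corollary~\ref{cor:exact-nuclearity-diag} (exactness of $G$ plus commutativity, hence nuclearity, of $A''$), and conclude by permanence of nuclearity under conditional expectations. The points you spell out --- the equivariant embedding $a\mapsto 1\otimes a$ inducing the inclusion of reduced crossed products, Tomiyama's theorem, and the CPAP argument for the range of the expectation --- are precisely what the paper leaves implicit, so no further comment is needed.
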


The next is our main theorem. It generalises a result for global actions due to Matsumura~\cite{Matsumura} -- recently extended to general locally compact groups by  Buss, Echterhoff and Willett \cite{buss2020amenability} --
to the context of partial actions of exact discrete groups on commutative $\Cst$\nb-algebras.

\begin{thm}
Let $\alpha=(\{A_g\}_{g\in G},\{\alpha_g\}_{g\in G})$ 
be a partial action of an exact discrete group~$G$ on a commutative $\Cst$\nb-algebra~$A$. Suppose that 
$A\rtimes_{\alpha}G=A\rtimes_{\alpha, \red}G$. Then the partial crossed product $A\rtimes_{\alpha}G$ is nuclear, or equivalently, $\alpha$ has the approximation property.
\label{thm:maintheorem}
\end{thm}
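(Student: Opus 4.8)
The plan is to combine the results already assembled in this section in essentially two moves. First, I would pass to the bidual partial action to gain nuclearity of a von Neumann algebraic ingredient, and then feed this into the known criteria for the approximation property of Fell bundles.

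\textbf{Step 1: reduce to nuclearity of a crossed product of the center.} Since $A$ is commutative, $A''$ is an abelian von Neumann algebra, so it coincides with its own center $\mathrm{Z}(A'')$. By Corollary~\ref{cor:bidual-commut}, the exactness of $G$ together with the hypothesis $A\rtimes_\alpha G = A\rtimes_{\alpha,\red}G$ gives that $A''\rtimes_{\alpha'',\red}G = \mathrm{Z}(A'')\rtimes_{\alpha'',\red}G$ is nuclear. This is the crucial input; everything after it is a matter of invoking general machinery for Fell bundles.

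\textbf{Step 2: conclude the approximation property.} As recalled in the paragraph following the statement of Theorem~\ref{thm:noncommutative} in the introduction (and made precise via Corollary~6.10 and Theorem~6.12 of~\cite{2019arXiv190703803A}), to show that $\alpha$ has Exel's approximation property it suffices to know that the partial crossed product of $\mathrm{Z}(A'')$ by the bidual partial action $\alpha''$ is nuclear. Step~1 supplies exactly this. Hence $\alpha$ has the approximation property, and therefore the semidirect product bundle $\Hilm[B]_\alpha$ does as well by definition. By \cite[Theorem~20.6]{Exel:Partial_dynamical} this already re-confirms $A\rtimes_\alpha G = A\rtimes_{\alpha,\red}G$, consistent with the hypothesis.

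\textbf{Step 3: nuclearity of $A\rtimes_\alpha G$.} Finally, $A = B_e$ is commutative, hence nuclear, and the semidirect product bundle has the approximation property; so $\Cst_\red(\Hilm[B]_\alpha) = A\rtimes_{\alpha,\red}G$ is nuclear by \cite[Corollary~4.23]{buss2020amenability} (equivalently by \cite[Proposition~25.10]{Exel:Partial_dynamical}), and this equals $A\rtimes_\alpha G$ by hypothesis. This proves both assertions, and the equivalence of ``nuclearity'' and ``the approximation property'' claimed in the statement is just the content of the cited characterisation together with $A$ being nuclear.

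\textbf{Main obstacle.} The genuine work is entirely inside Step~1, i.e.\ inside Lemma~\ref{lem:bidual-equiv-cp} and Corollary~\ref{cor:bidual-commut}: one needs the equivariant conditional expectation $(\ell^\infty(G)\otimes A'')\rtimes_{\tau\otimes\alpha'',\red}G \to A''\rtimes_{\alpha'',\red}G$, whose construction rests on the Haagerup standard form (Proposition~\ref{prop:cov-standard}), Arveson extension, a multiplicative-domain argument using that $\iota(A'')$ is maximal abelian in $\Bound(\Hilm[H])$, and the reduction-to-reduced-bundles result Proposition~\ref{prop:ccp-reduced}. Once those are in hand, the passage from ``$\mathrm{Z}(A'')\rtimes_{\alpha'',\red}G$ nuclear'' to ``$\alpha$ has the approximation property'' is a black-box application of \cite{2019arXiv190703803A}, and the final nuclearity of $A\rtimes_\alpha G$ is immediate.
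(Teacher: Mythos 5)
Your proposal is correct and follows essentially the same route as the paper: the decisive input is Corollary~\ref{cor:bidual-commut} (nuclearity of $A''\rtimes_{\alpha'',\red}G$, resting on Lemma~\ref{lem:bidual-equiv-cp}, Proposition~\ref{prop:cov-standard}, Proposition~\ref{prop:ccp-reduced} and exactness), exactly as you describe. Your endgame via the approximation property and \cite[Theorem~6.12]{2019arXiv190703803A} is precisely the alternative argument recorded at the end of the paper's own proof; the paper's primary deduction of nuclearity of $A\rtimes_\alpha G$ instead proceeds directly from nuclearity of $A''\rtimes_{\alpha'',\red}G$ via the canonical inclusion $(A\rtimes_{\alpha} G)\otimes_\max B\hookrightarrow (A''\rtimes_{\alpha'',\red} G)\otimes_\max B$ and \cite[Proposition~3.6.6]{Brown-Ozawa:Approximations}, but both arguments are given there.
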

\begin{proof} By Corollary~\ref{cor:bidual-commut}, $A''\rtimes_{\alpha'',\red}G$ is nuclear. In particular, $A''\rtimes_{\alpha'',\red}G=A''\rtimes_{\alpha''}G$. To conclude that $A\rtimes_{\alpha}G$ is nuclear as well, let $\tilde{\pi}\colon A\rtimes_{\alpha}G\to\Bound(\Hilm[H])$ be a nondegenerate representation and let $(\pi, v)$ be a covariant pair for $\alpha$ with $\tilde{\pi}=\pi\times v$. Then $(\pi, v)$ gives the covariant pair $(\pi'', v)$ for $\alpha''$, where $\pi''$ is the unique normal extension of $\pi$. Since $\pi''\times v$ extends $\pi\times v$ and $$(\pi''\times v)(A''\rtimes_{\alpha'',\red} G)\subset ((\pi\times v)(A\rtimes_{\alpha} G))'',$$  \cite[Proposition~3.6.6]{Brown-Ozawa:Approximations} implies that for every $\Cst$\nb-algebra $B$ we have a canonical inclusion \[(A\rtimes_{\alpha} G)\otimes_\max B\hookrightarrow (A''\rtimes_{\alpha'',\red} G)\otimes_\max B.\] This shows that $A\rtimes_{\alpha} G$ is nuclear as wished. One could also deduce that $A\rtimes_{\alpha}G$ is nuclear from \cite[Theorem~6.12]{2019arXiv190703803A} as $\alpha''$ has the approximation property because $A''\rtimes_{\alpha'',\red}G$ is nuclear, and so the implication (ii)$\Rightarrow$(v) of  \cite[Theorem~6.12]{2019arXiv190703803A} gives that $\alpha$ has the approximation property as well.
\end{proof}

\subsection{A general characterisation} Our next goal is to give a characterisation of the approximation property for partial actions of exact discrete groups on $\Cst$\nb-algebras in terms of isomorphisms between full and reduced partial crossed products associated to diagonal partial actions, in a similar fashion as proved by Matsumura in \cite{Matsumura} and recently improved in \cite[Theorem~5.16]{buss2020amenability} by Buss, Echterhoff and Willett.

\begin{lem}\label{lem:bidual-ext} Let $(\{A_g\}_{g\in G},\{\alpha\}_{g\in G})$ be a partial action of a discrete group on a $\Cst$\nb-algebra~$A$. Suppose that  $$(A\otimes_{\mathrm{max}} A^{\mathrm{op}})\rtimes_{\alpha\otimes\alpha^{\mathrm{op}}}G=(A\otimes_{\mathrm{max}} A^{\mathrm{op}})\rtimes_{\alpha\otimes\alpha^{\mathrm{op}}, \red}G.$$ Then there is a ccp map $\varphi''\colon(\ell^\infty(G)\otimes A'')\rtimes_{\tau\otimes\alpha'',\red}G\to A''\rtimes_{\alpha'', \red}G$ such that for all~$g\in G$ and $a\in A_g'',$ one has
$$\varphi''((\ell^\infty(G)\otimes A''_g)\delta_g)\subset A_g''\delta_g\qquad\text{and}\qquad\varphi''((1\otimes a)\delta_g)=a\delta_g.$$  
\begin{proof} Let $(A'', \Hilm[H], J,P)$ be the Haagerup standard form of~$A''$. Let $(\iota,v)$ and $(\iota^{\mathrm{op}},v)$ be the covariant $\Wst$\nb-representations of~$(\{A''_g\}_{g\in G},\{\alpha''_g\}_{g\in G})$ and $(\{(A''_g)^{\mathrm{op}}\}_{g\in G},\{(\alpha''_g)^{\mathrm{op}}\}_{g\in G})$ in~$\Bound(\Hilm[H])$ constructed in Proposition~\textup{\ref{prop:cov-standard}}. Then the pair $(\iota\times\iota^{\mathrm{op}},v)$ gives a covariant representation of~$(\{A''_g\otimes_{\mathrm{max}}(A''_g)^{\mathrm{op}}\}_{g\in G},\{\alpha''_g\otimes (\alpha_g'')^\mathrm{op}\}_{g\in G})$ on~$\Hilm[H]$. Using that the full and reduced partial crossed products associated to~$(\{A_g\otimes_{\mathrm{max}}A_g^{\mathrm{op}}\}_{g\in G},\{\alpha_g\otimes \alpha_g^\mathrm{op}\}_{g\in G})$ coincide, we can construct a ccp map $$\phi\colon(\ell^\infty(G)\otimes(A\otimes_{\mathrm{max}}A^{\mathrm{op}}))\rtimes_{\tau\otimes(\alpha\otimes\alpha^{\mathrm{op}})}G\to\Bound(\Hilm[H])$$ as in the proof of Lemma~\ref{lem:bidual-equiv-cp} such that $\phi=(\iota\times\iota^{\mathrm{op}})\times v$ on~$(A\otimes_{\mathrm{max}} A^{\mathrm{op}})\rtimes_{\alpha\otimes\alpha^{\mathrm{op}}}G.$ Let~$\tilde{\Hilm[H]}$ be a Hilbert space, $V\colon \Hilm[H]\to\tilde{\Hilm[H]}$ a contractive operator, and $$\tilde{\pi}\colon  (\ell^\infty(G)\otimes(A\otimes_{\mathrm{max}}A^{\mathrm{op}}))\rtimes_{\tau\otimes(\alpha\otimes\alpha^{\mathrm{op}})}G\to\Bound(\tilde{\Hilm[H]})$$ a representation such that $(\tilde{\pi},\tilde{\Hilm[H]}, V)$ is a minimal Stinespring dilation of~$\phi$. This means that $\phi(b)=V^*\tilde{\pi}(b)V$  for all~$b\in (\ell^\infty(G)\otimes(A\otimes_{\mathrm{max}}A^{\mathrm{op}}))\rtimes_{\tau\otimes(\alpha\otimes\alpha^{\mathrm{op}})}G$ and $\tilde{\pi}(\exopp)V\Hilm[H]$ is dense in~$\tilde{\Hilm[H]}$. In particular, there is a unique covariant representation 
$(\pi, w)$ of~$\actex$ on~$\tilde{\Hilm[H]}$ so that $\tilde{\pi}=\pi\times w$. By nondegeneracy of~$\pi$, there are nondegenerate \Star homomorphisms $$\pi_1\colon\ell^\infty(G)\to\Bound(\tilde{\Hilm[H]})\qquad \text{ and }\qquad\pi_2\colon A\otimes_{\mathrm{max}}A^\mathrm{op}\to\Bound(\tilde{\Hilm[H]}),$$ with commuting ranges, such that $\pi(f\otimes c)=\pi_1(f)\pi_2(c)$ for all~$f\in\ell^\infty(G)$ and~$c\in  A\otimes_{\mathrm{max}}A^\mathrm{op}$.

Now let $\pi_2''\colon  (A\otimes_{\mathrm{max}}A^\mathrm{op})''\to\Bound(\tilde{\Hilm[H]})$ be the unique normal extension of~$\pi_2$. Then $(\pi_1\times\pi_2'',w)$ is a covariant representation of~$(\{\ell^\infty(G)\otimes (A_g\otimes_{\mathrm{max}}A_g^\mathrm{op})''\}_{g\in G},\{\tau_g\otimes(\alpha_g\otimes\alpha_g^\mathrm{op})''\}_{g\in G})$ in~$\Bound(\tilde{\Hilm[H]})$. We claim that for all~$g\in G$ and $a\in A_g''$, $$\pi_2''(a\otimes 1_g)=\pi_2''(a\otimes 1).$$ Here we are using the canonical normal embedding~$A''\hookrightarrow (A\otimes_{\mathrm{max}}A^\mathrm{op})''$ extending the inclusion $a\mapsto a\otimes 1$ of $A$ in~$(A\otimes_{\mathrm{max}}A^\mathrm{op})''$. We will prove that $\pi''_2(a\otimes 1)=\pi''_2(a\otimes 1_g)$ for all $a\in A_g$, and the equality for an arbitrary~$a$ in $A_g''$ will follow because $A_g$ is strongly dense in~$ A_g''$.

Let $a\in A_g$. Take $f_1,f_2\in\ell^\infty(G)$, $h,k\in G$, $b_1\in A_h,c_1\in A_h^\mathrm{op}$, $b_2\in A_k$ and $c_2\in A_k^\mathrm{op}$. Let $\xi, \eta\in\Hilm[H]$ and $(v_{j})_{j\in J}$ be an approximate identity for~$A_g$. Since $\pi''_2$ and $\iota$ are normal representations of~$(A\otimes_{\mathrm{max}}A^\mathrm{op})''$ and $A''$, respectively, we have
\begin{equation}\label{eq:factorization}\tag{$\dagger$}
\begin{aligned}
&\braket{\pi''_2(a\otimes 1_g)\pi(f_1\otimes(b_1\otimes c_1))w_hV\xi}{\pi(f_2\otimes (b_2\otimes c_2))w_kV\eta}\\&=\lim_{\substack{j}}\braket{\pi_2''(a\otimes v_j)\pi(f_1\otimes(b_1\otimes c_1))w_hV\xi}{\pi(f_2\otimes (b_2\otimes c_2))w_kV\eta}\\&=\lim_{\substack{j}}\braket{\pi(1\otimes(a\otimes v_j))(\pi(f_1\otimes(b_1\otimes c_1))w_hV\xi}{\pi(f_2\otimes (b_2\otimes c_2))w_kV\eta}\\&=\lim_{\substack{j}}\braket{V^*\pi(\tau_{k^{-1}}(f_2^*f_1)\otimes \alpha_{k^{-1}}(b_2^*ab_1)\otimes\alpha_{k^{-1}}^\mathrm{op}(c_1v_{j}c_2^*))w_{k^{-1}h}V\xi}{\eta}\\&=\lim_{\substack{j}}\braket{\phi(\tau_{k^{-1}}(f_2^*f_1)\otimes \alpha_{k^{-1}}(b_2^*ab_1)\otimes\alpha_{k^{-1}}^\mathrm{op}(c_1v_{j}c_2^*)\delta_{k^{-1}h})\xi}{\eta}\\&=\lim_{\substack{j}}\braket{\phi(1\otimes\alpha_{k^{-1}} (b_2^*)\otimes \alpha_{k^{-1}}^\mathrm{op}(c_2^*)\delta_{k^{-1}})\phi(1\otimes a\otimes v_{j})\phi((f_1^*f_2\otimes b_1\otimes c_1)\delta_h)\xi}{\eta}\\&=\lim_{\substack{j}}\braket{\phi(1\otimes\alpha_{k^{-1}}( b_2^*)\otimes \alpha_{k^{-1}}^\mathrm{op}(c_2^*)\delta_{k^{-1}})\iota(a)J\iota(v_{j})J\phi((f_1^*f_2\otimes b_1\otimes c_1)\delta_h)\xi}{\eta}\\&=\braket{\phi(1\otimes\alpha_{k^{-1}} (b_2^*)\otimes \alpha_{k^{-1}}^\mathrm{op}(c_2^*)\delta_{k^{-1}})\iota(a)\iota(1_g)\phi((f_1^*f_2\otimes b_1\otimes c_1)\delta_h)\xi}{\eta}\\&=\braket{\phi(1\otimes\alpha_{k^{-1}}( b_2^*)\otimes \alpha_{k^{-1}}^\mathrm{op}(c_2^*)\delta_{k^{-1}})\iota(a)\phi((f_1^*f_2\otimes b_1\otimes c_1)\delta_h)\xi}{\eta}.
\end{aligned}
\end{equation}
We used above that $\opp$ is contained in the multiplicative domain of~$\phi$. Now let $(u_{j})_{j\in J}$ be an approximate identity for~$A$. Reversing the computation in~\eqref{eq:factorization}, we obtain
\begin{equation*}
\begin{aligned}
\eqref{eq:factorization}&=\lim_{\substack{j}}\braket{\phi(1\otimes\alpha_{k^{-1}}( b_2^*)\otimes \alpha_{k^{-1}}^\mathrm{op}(c_2^*)\delta_{k^{-1}})\iota(a)J\iota(u_{j})J\phi((f_1^*f_2\otimes b_1\otimes c_1)\delta_h)\xi}{\eta}\\
&=\lim_{\substack{j}}\braket{\phi(1\otimes\alpha_{k^{-1}}( b_2^*)\otimes \alpha_{k^{-1}}^\mathrm{op}(c_2^*)\delta_{k^{-1}})\phi(1\otimes a\otimes u_{j})\phi((f_1^*f_2\otimes b_1\otimes c_1)\delta_h)\xi}{\eta}
\\&=\lim_{\substack{j}}\braket{\phi(1\otimes\alpha_{k^{-1}}( b_2^*)\otimes \alpha_{k^{-1}}^\mathrm{op}(c_2^*)\delta_{k^{-1}})\phi((f_1^*f_2\otimes ab_1\otimes u_{j} c_1)\delta_h)\xi}{\eta}
\\&=\lim_{\substack{j}}\braket{\phi(\tau_{k^{-1}}(f_2^*f_1)\otimes \alpha_{k^{-1}}(b_2^*ab_1)\otimes\alpha_{k^{-1}}^\mathrm{op}(c_1u_{j}c_2^*)\delta_{k^{-1}h})\xi}{\eta}
\\&=\lim_{\substack{j}}\braket{V^*\pi(\tau_{k^{-1}}(f_2^*f_1)\otimes \alpha_{k^{-1}}(b_2^*ab_1)\otimes\alpha_{k^{-1}}^\mathrm{op}(c_1u_{j}c_2^*))w_{k^{-1}h}V\xi}{\eta}
\\&=\lim_{\substack{j}}\braket{\pi(1\otimes a\otimes u_{j})\pi(f_1\otimes(b_1\otimes c_1))w_hV\xi}{\pi(f_2\otimes (b_2\otimes c_2))w_kV\eta}.
\\&=\braket{\pi''_2(a\otimes 1)\pi(f_1\otimes(b_1\otimes c_1))w_hV\xi}{\pi(f_2\otimes (b_2\otimes c_2))w_kV\eta}.
\end{aligned}
\end{equation*} This entails $\pi''_2(a\otimes 1_g)=\pi''_2(a\otimes 1)$ because we have taken a minimal Stinespring dilation of~$\phi$. We thus deduce that $\pi''_2(a\otimes 1_g)=\pi_2''(a\otimes 1)$ for all $g\in G$ and $a\in A_g''$. As a consequence, the restriction of~$\pi_1\times\pi_2''$ to $\ell^\infty(G)\otimes A''\otimes 1$ together with $w\colon G\to\Bound(\Hilm[K])$ give a covariant representation of~$(\{\ell^\infty(G)\otimes A_g''\}_{g\in G},\{\tau_g\otimes\alpha_g''\}_{g\in G})$. This gives rise to a ccp map $$\phi''\colon(\ell^\infty(G)\otimes A'')\rtimes_{\tau\otimes\alpha''} G\to\Bound(\Hilm[H])$$ defined by $\phi''(b)\coloneqq V^*((\pi_1\times\pi_2'')\times w)(b)V$. Also, $\phi''((1\otimes a)\delta_g)=\iota(a)v_g$ for all $g\in G$ and $a\in A_g$ since for all~$\xi,\eta\in \Hilm[H]$ we have
\begin{equation*}
\begin{aligned}\braket{\phi''((1\otimes a)\delta_g)\xi}{\eta}&=\braket{V^*\pi''_2(a\otimes 1)w_gV\xi}{\eta}\\&=\braket{V^*\pi''_2(a\otimes 1_g)w_gV\xi}{\eta}\\&=\lim_{\substack{j}}\braket{V^*\pi(1\otimes a\otimes v_{j})w_gV\xi}{\eta}\\&=\lim_{\substack{j}}\braket{\iota(a)Jv_{j}Jv_g\xi}{\eta}\\&=\braket{\iota(a)v_g\xi}{\eta},
\end{aligned}
\end{equation*}
where $(v_{j})_{j\in J}$ is an approximate identity for $A_g$. The same equality will hold for all $a\in A_g''$ because $\iota$ and $\pi_2''$ are normal representations of~$A''$ and $(A\otimes_{\mathrm{max}}A^\mathrm{op})''$, respectively. 

We claim that $\phi''((\ell^{\infty}(G)\otimes A_g'')\delta_g)\subseteq \iota(A_g'')v_g$. To show this, it suffices to prove that $\phi''(\ell^\infty(G)\otimes A)\subseteq \iota(A'')$. Let $f\in\ell^\infty(G)$ and $a\in A$. Let $(u_{j})_{j\in J}$ be an approximate identity for~$A$. Since $\pi_2(A\otimes_{\mathrm{max}}A^\mathrm{op})$ is contained in the multiplicative domain of the ccp map $\Bound(\tilde{\Hilm[H]})\to\Bound(\Hilm[H])$, $T\mapsto V^*TV$, we have for all $b\in A^{\mathrm{op}}$, and for all~$\xi,\eta\in\Hilm[H]$, 
\begin{equation*}
\begin{aligned}\braket{\phi''(f\otimes a)J\iota(b)^*J\xi}{\eta}&=\lim_{\substack{j}}\braket{\phi''(f\otimes a)\iota(u_{j})J\iota(b)^*J\xi}{\eta}\\&=\lim_{\substack{j}}\braket{V^*\pi_1(f)\pi_2''(a\otimes 1)VV^*\pi_2(u_{j}\otimes b)V)\xi}{\eta}
\\&=\braket{V^*\pi_1(f)\pi_2(a\otimes b)V)\xi}{\eta}\\&=\lim_{\substack{j}}\braket{V^*\pi_1(f)\pi_2(u_{j}a\otimes b)V\xi}{\eta}\\&=\lim_{\substack{j}}\braket{V^*\pi_2(u_{j}\otimes b)VV^*\pi_1(f)\pi_2''(a\otimes 1)V\xi}{\eta}
\\&=\lim_{\substack{j}}\braket{\iota(u_{j})J\iota(b)^*JV^*\pi_1(f)\pi_2''(a\otimes 1)V\xi}{\eta}
\\&=\braket{J\iota(b)^*J\phi''(f\otimes a)\xi}{\eta}.
\end{aligned}
\end{equation*} Hence $\phi''(\ell^{\infty}(G)\otimes A)$ commutes with $J AJ$. So it commutes with $JA''J$ because $JAJ$ is weakly dense in~$JA''J$. We deduce that $\phi''(\ell^\infty(G)\otimes A)\subseteq (JA''J)'=A''$ and thus $\phi''(\ell^\infty(G)\otimes A'')\subseteq \iota(A'')$.

In order to complete the proof of the lemma, we apply Proposition~\ref{prop:ccp-reduced}. Because $\iota\colon A''\to\Bound(\Hilm[H])$ is faithful, there is a ccp map $$\varphi''\colon (\ell^\infty(G)\otimes A'')\rtimes_{\tau\otimes\alpha'', \red}G\to A''\rtimes_{\alpha'',\red}G$$ such that $\varphi''|_{\B_{\tau\otimes\alpha''}}=\phi''$ under the identification of~$A_g''\delta_g$ with $\iota(A''_g)v_g\subset\Bound(\Hilm[H])$. Thus, for all $g\in G$ and $a\in A_g''$, we have $\varphi''((\ell^\infty(G)\otimes A''_g)\delta_g)\subset A''_g\delta_g$ and $\varphi''((1\otimes a)\delta_g)=a\delta_g$. This finishes the proof of the lemma.
\end{proof}
\end{lem}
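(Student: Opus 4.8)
The plan is to follow the proof of Lemma~\ref{lem:bidual-equiv-cp}, replacing the ``maximal abelian'' input used there (that $\iota(A'')$ is its own commutant when $A$ is commutative) by the simultaneous action of $A''$ and of its opposite on a standard form, together with the weak containment hypothesis for the diagonal partial action on~$A\otimes_{\max}A^{\mathrm{op}}$. First I fix a Haagerup standard form $(A'',\Hilm[H],J,P)$, write $\iota\colon A''\to\Bound(\Hilm[H])$ for the inclusion, and invoke Proposition~\ref{prop:cov-standard} to produce a \Star partial representation $v\colon G\to\Bound(\Hilm[H])$ for which $(\iota,v)$ is a covariant $\Wst$\nb-representation of~$\alpha''$ and, simultaneously, $(\iota^{\mathrm{op}},v)$ is a covariant $\Wst$\nb-representation of~$(\alpha'')^{\mathrm{op}}$, where $\iota^{\mathrm{op}}(m)=Jm^*J$ identifies $(A'')^{\mathrm{op}}$ with~$\iota(A'')'$. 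As $\iota$ and $\iota^{\mathrm{op}}$ have commuting ranges, $(\iota\times\iota^{\mathrm{op}},v)$ is a covariant representation of the diagonal $\Wst$\nb-partial action $(\{A''_g\otimes_{\max}(A''_g)^{\mathrm{op}}\}_{g\in G},\{\alpha''_g\otimes(\alpha''_g)^{\mathrm{op}}\}_{g\in G})$ on~$\Hilm[H]$.

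Next I carry out the construction of Lemma~\ref{lem:bidual-equiv-cp} verbatim with $(A,\alpha)$ replaced by $(A\otimes_{\max}A^{\mathrm{op}},\alpha\otimes\alpha^{\mathrm{op}})$: I realise a faithful representation of $(A\otimes_{\max}A^{\mathrm{op}})\rtimes_{\alpha\otimes\alpha^{\mathrm{op}},\red}G$ on a Hilbert space $\ell^2(G)\otimes\Hilm[K]$ by Fell's absorption principle \cite[Proposition~18.4]{Exel:Partial_dynamical}, which by the weak containment hypothesis is also faithful on the full crossed product; I extend the integrated form $(\iota\times\iota^{\mathrm{op}})\times v$ along this inclusion using Arveson's extension theorem; and I precompose with the representation of $\exopp$ on $\ell^2(G)\otimes\Hilm[K]$ built, exactly as in the proof of Lemma~\ref{lem:bidual-equiv-cp}, from the multiplication representation of~$\ell^\infty(G)$ and the covariant pair underlying the chosen faithful representation of~$\opp$. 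This produces a ccp map $\phi\colon\exopp\to\Bound(\Hilm[H])$ agreeing with $(\iota\times\iota^{\mathrm{op}})\times v$ on~$\opp$, so that $\opp$ lies in the multiplicative domain of~$\phi$. I then pass to a minimal Stinespring dilation $(\tilde{\pi},\tilde{\Hilm[H]},V)$ of~$\phi$, decompose the associated covariant representation of $\tau\otimes(\alpha\otimes\alpha^{\mathrm{op}})$ as $(\pi,w)$ with $\pi=\pi_1\cdot\pi_2$ for commuting nondegenerate representations $\pi_1$ of~$\ell^\infty(G)$ and $\pi_2$ of~$A\otimes_{\max}A^{\mathrm{op}}$, and let $\pi_2''$ be the normal extension of~$\pi_2$ to~$(A\otimes_{\max}A^{\mathrm{op}})''$, into which $A''$ embeds normally via $a\mapsto a\otimes 1$.

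The heart of the argument, and the step I expect to be the main obstacle, is the operator identity $\pi_2''(a\otimes 1_g)=\pi_2''(a\otimes 1)$ for all $g\in G$ and $a\in A''_g$, where $1_g$ is the central projection of~$A''$ that is the unit of the $\Wst$\nb-ideal~$A''_g$; it is precisely this that forces the restriction of $\pi_1\times\pi_2''$ to $\ell^\infty(G)\otimes A''\otimes 1$, together with~$w$, to be covariant for the partial action $\tau\otimes\alpha''$ with the correct ideals $\ell^\infty(G)\otimes A''_g$. By normality and strong density I reduce to $a\in A_g$, and then I test the identity on the dense family of vectors $\pi(f\otimes(b\otimes c))w_hV\xi$. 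Inserting an approximate identity $(v_j)$ for~$A_g$ and writing $\pi_2''(a\otimes v_j)=\pi(1\otimes(a\otimes v_j))$, I slide this factor past~$w_h$ by covariance, rewrite the resulting matrix coefficients in the form $\langle\phi((\cdots)\delta_\bullet)\xi,\eta\rangle$ using $\phi=V^*\tilde{\pi}(\cdot)V$, and split off the parts supported on~$\opp$ --- which lie in the multiplicative domain of~$\phi$ --- so that the middle term becomes $\phi(1\otimes(a\otimes v_j))=\iota(a)\,J\iota(v_j)^*J$; since $J\iota(v_j)J\to J\iota(1_g)J$ strongly while $\iota(a)$ is unaffected because $a=1_g a$ in~$A''$, running the same computation in reverse with an approximate identity $(u_j)$ for all of~$A$ produces $\pi_2''(a\otimes 1)$, and minimality of the Stinespring dilation upgrades the coincidence of matrix coefficients to the operator identity. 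Granting this, I obtain a ccp map $\phi''\colon(\ell^\infty(G)\otimes A'')\rtimes_{\tau\otimes\alpha''}G\to\Bound(\Hilm[H])$ by $\phi''(b)=V^*((\pi_1\times\pi_2'')\times w)(b)V$, and a short computation on the same vectors (using the identity and normality of $\iota$ and~$\pi_2''$) gives $\phi''((1\otimes a)\delta_g)=\iota(a)v_g$ for all $g\in G$ and $a\in A''_g$.

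It remains to show $\phi''(\ell^\infty(G)\otimes A'')\subseteq\iota(A'')$ and to pass to the reduced level. For the inclusion I use that $\pi_2(A\otimes_{\max}A^{\mathrm{op}})$ lies in the multiplicative domain of the compression $T\mapsto V^*TV$: this makes each $\phi''(f\otimes a)$, for $f\in\ell^\infty(G)$ and $a\in A$, commute with the operators $J\iota(b)^*J$ for $b\in A$ (these being the compressions $V^*\pi_2''(1\otimes b)V$), hence with $JAJ$, hence with its weak closure $JA''J=\iota(A'')'$; thus $\phi''(\ell^\infty(G)\otimes A)\subseteq\iota(A'')$ and, by normality, $\phi''(\ell^\infty(G)\otimes A'')\subseteq\iota(A'')$, which together with $\phi''((1\otimes a)\delta_g)=\iota(a)v_g$ gives $\phi''((\ell^\infty(G)\otimes A''_g)\delta_g)\subseteq\iota(A''_g)v_g$. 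Finally I realise a faithful representation of $A''\rtimes_{\alpha''}G$ that factors through $A''\rtimes_{\alpha'',\red}G$ via Fell's absorption, so that Proposition~\ref{prop:ccp-reduced} applies to~$\phi''$ and yields the desired ccp map $\varphi''\colon(\ell^\infty(G)\otimes A'')\rtimes_{\tau\otimes\alpha'',\red}G\to A''\rtimes_{\alpha'',\red}G$ satisfying $\varphi''((\ell^\infty(G)\otimes A''_g)\delta_g)\subseteq A''_g\delta_g$ and $\varphi''((1\otimes a)\delta_g)=a\delta_g$.
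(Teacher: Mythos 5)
Your proposal is correct and follows essentially the same route as the paper's own proof: the same standard-form covariant pairs $(\iota,v)$, $(\iota^{\mathrm{op}},v)$, the same ccp map $\phi$ built as in Lemma~\ref{lem:bidual-equiv-cp} from the weak containment hypothesis, the same minimal Stinespring dilation and matrix-coefficient argument for the key identity $\pi_2''(a\otimes 1_g)=\pi_2''(a\otimes 1)$, the same commutant argument giving $\phi''(\ell^\infty(G)\otimes A'')\subseteq\iota(A'')$, and the same final application of Proposition~\ref{prop:ccp-reduced}. The only point stated a bit loosely (in both your sketch and, implicitly, the paper) is that the limit step uses $J\iota(1_g)J=\iota(1_g)$, which holds because $1_g$ is a central projection and the standard form satisfies $JcJ=c^*$ on the center.
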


Before we proceed with the proof of our second main result, let us deduce a direct consequence of the previous lemma regarding the restriction of~$\alpha''$ to the center of~$A''$:

\begin{cor}\label{cor:Z-weak-containment}
If $G$ is exact and $\alpha$ is a partial action of $G$ on a \cstar{}algebra~$A$ such that the diagonal partial action $\alpha\otimes \alpha^\op$ on~$A\otimes_\max A^{\mathrm{op}}$ satisfies the weak containment property, then $\mathrm{Z}(A'')\rtimes_{\alpha'',\red}G$ is nuclear.
\end{cor}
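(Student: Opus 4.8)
The plan is to reduce to Corollary~\ref{cor:exact-nuclearity-diag} by producing, along the lines of Lemma~\ref{lem:bidual-ext} and Corollary~\ref{cor:bidual-commut}, a conditional expectation onto $\mathrm{Z}(A'')\rtimes_{\alpha'',\red}G$ from a nuclear $\Cst$\nb-algebra. First I would record that $\alpha''$ restricts to a $\Wst$\nb-partial action $\zeta=(\{\mathrm{Z}(A'')_g\}_{g\in G},\{\zeta_g\}_{g\in G})$ on the centre $\mathrm{Z}(A'')$: writing $A_g''=z_gA''$ for a central projection $z_g\in\mathrm{Z}(A'')$, the set $\mathrm{Z}(A'')_g\defeq\mathrm{Z}(A'')\cap A_g''=z_g\mathrm{Z}(A'')$ is a $\Wst$\nb-ideal of $\mathrm{Z}(A'')$, and $\alpha_g''\colon A_{g^{-1}}''\to A_g''$ restricts to a $\Wst$\nb-isomorphism $\zeta_g\colon\mathrm{Z}(A'')_{g^{-1}}\to\mathrm{Z}(A'')_g$ of the respective centres; the partial action axioms for $\zeta$ follow from those of $\alpha''$. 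Since $\mathrm{Z}(A'')$ is a commutative, hence nuclear, $\Cst$\nb-algebra and $G$ is exact, Corollary~\ref{cor:exact-nuclearity-diag} gives that the diagonal $\Wst$\nb-partial action $\tau\otimes\zeta$ has the approximation property and that
\[
(\ell^\infty(G)\otimes\mathrm{Z}(A''))\rtimes_{\tau\otimes\zeta}G=(\ell^\infty(G)\otimes\mathrm{Z}(A''))\rtimes_{\tau\otimes\zeta,\red}G
\]
is nuclear.

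The technical core is to run the construction of Lemma~\ref{lem:bidual-ext} with $\mathrm{Z}(A'')$ in place of $A''$, that is, to produce a ccp map $\varphi''\colon(\ell^\infty(G)\otimes\mathrm{Z}(A''))\rtimes_{\tau\otimes\zeta,\red}G\to\mathrm{Z}(A'')\rtimes_{\zeta,\red}G$ with $\varphi''((1\otimes z)\delta_g)=z\delta_g$ for all $g\in G$ and $z\in\mathrm{Z}(A''_g)$. For this I would go back into the proof of Lemma~\ref{lem:bidual-ext} and use the auxiliary ccp map $\phi''\colon(\ell^\infty(G)\otimes A'')\rtimes_{\tau\otimes\alpha''}G\to\Bound(\Hilm[H])$ built there together with the covariant pair $(\iota,v)$ for $\alpha''$ on the Haagerup standard form $(A'',\Hilm[H],J,P)$. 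The one new observation needed is that $1\otimes A''$ lies in the multiplicative domain of $\phi''$: indeed $\phi''$ agrees on the canonical copy of $A''\rtimes_{\alpha''}G$ inside $(\ell^\infty(G)\otimes A'')\rtimes_{\tau\otimes\alpha''}G$ with the integrated form $\iota\times v$ --- both maps are continuous and coincide on the fibres, sending $(1\otimes a)\delta_g$ to $\iota(a)v_g$ --- and $\iota\times v$ is a \Star homomorphism, so this copy, in particular each $1\otimes a$ with $\phi''(1\otimes a)=\iota(a)$, is in the multiplicative domain. Hence, for $z\in\mathrm{Z}(A'')$ and $f\in\ell^\infty(G)$,
\[
\iota(a)\phi''(f\otimes z)=\phi''\bigl((1\otimes a)(f\otimes z)\bigr)=\phi''(f\otimes az)=\phi''(f\otimes za)=\phi''(f\otimes z)\iota(a)\qquad(a\in A''),
\]
using centrality of $z$; combined with the inclusion $\phi''(\ell^\infty(G)\otimes A'')\subseteq\iota(A'')$ established in Lemma~\ref{lem:bidual-ext} and with $\iota(\mathrm{Z}(A''))=\iota(A'')\cap\iota(A'')'$, this forces $\phi''(\ell^\infty(G)\otimes\mathrm{Z}(A''))\subseteq\iota(\mathrm{Z}(A''))$. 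Cutting down by the projections $z_g$ as at the end of the proof of Lemma~\ref{lem:bidual-ext} then gives $\phi''((\ell^\infty(G)\otimes\mathrm{Z}(A''_g))\delta_g)\subseteq\iota(\mathrm{Z}(A''_g))v_g$ for all $g$, and applying Proposition~\ref{prop:ccp-reduced} to the sub\nb-Fell bundles $\B_{\tau\otimes\zeta}\subseteq\B_{\tau\otimes\alpha''}$ and $\B_\zeta\subseteq\B_{\alpha''}$, with the faithful representation $z\delta_g\mapsto\iota(z)v_g$ of $\B_\zeta$ (its unit fibre part being $\iota|_{\mathrm{Z}(A'')}$, which is faithful), yields the desired $\varphi''$.

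Finally I would conclude as in Corollary~\ref{cor:bidual-commut}: the canonical inclusion $\iota_0\colon\mathrm{Z}(A'')\rtimes_{\zeta,\red}G\into(\ell^\infty(G)\otimes\mathrm{Z}(A''))\rtimes_{\tau\otimes\zeta,\red}G$, $z\delta_g\mapsto(1\otimes z)\delta_g$, is injective (by the argument used at the beginning of the proof of Lemma~\ref{lem:bidual-equiv-cp}) and satisfies $\varphi''\circ\iota_0=\id$, so $\iota_0\circ\varphi''$ is a conditional expectation onto the copy of $\mathrm{Z}(A'')\rtimes_{\zeta,\red}G$ inside the nuclear $\Cst$\nb-algebra $(\ell^\infty(G)\otimes\mathrm{Z}(A''))\rtimes_{\tau\otimes\zeta,\red}G$; since nuclearity passes to ranges of conditional expectations, $\mathrm{Z}(A'')\rtimes_{\alpha'',\red}G=\mathrm{Z}(A'')\rtimes_{\zeta,\red}G$ is nuclear. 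I expect the main obstacle to be the middle step --- showing that $\phi''$, equivalently $\varphi''$, carries the $\ell^\infty(G)\otimes\mathrm{Z}(A''_g)$\nb-fibres into the $\mathrm{Z}(A''_g)$\nb-fibres --- since it forces one to re-enter the Stinespring dilation of Lemma~\ref{lem:bidual-ext} and to combine the multiplicative-domain property of $\phi''$ with the maximal abelianness of $\iota(\mathrm{Z}(A''))$ in $\Bound(\Hilm[H])$; the remainder is a routine reprise of the commutative case treated in Corollary~\ref{cor:bidual-commut}.
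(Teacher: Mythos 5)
Your proof is correct and takes essentially the same route as the paper: the paper simply restricts the ccp map $\varphi''$ of Lemma~\ref{lem:bidual-ext} to $(\ell^\infty(G)\otimes\mathrm{Z}(A''))\rtimes_{\tau\otimes\alpha'',\red}G$ --- noting, by the same multiplicative-domain/commutant reasoning you spell out, that central fibres go to central fibres --- and composes with the canonical embedding to obtain a conditional expectation onto a copy of $\mathrm{Z}(A'')\rtimes_{\alpha'',\red}G$ inside the nuclear algebra furnished by Corollary~\ref{cor:exact-nuclearity-diag}. Your detour back through the Stinespring-level map $\phi''$ and a second application of Proposition~\ref{prop:ccp-reduced} is unnecessary (one can argue directly with $\varphi''$ at the reduced level) but harmless, and the rest of your argument matches the paper's.
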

\begin{proof}  Let $\varphi''$ be the ccp map from Lemma~\ref{lem:bidual-ext}. Notice that $\varphi''((\ell^\infty(G)\otimes 1)\delta_e)\subset\mathrm{Z}(A'')\delta_e$ and so $$\varphi''((\ell^\infty(G)\otimes\mathrm{Z}(A_g''))\delta_g)\subset \mathrm{Z}(A_g'')\delta_g$$ for all $g\in G$. Thus $\varphi''$ restricts to a ccp $ (\ell^\infty(G)\otimes \mathrm{Z}(A''))\rtimes_{\tau\otimes\alpha'', \red}G\to \mathrm{Z}(A'')\rtimes_{\alpha'',\red} G$ that, when composed with the canonical embedding $$\mathrm{Z}(A'')\rtimes_{\alpha'',\red}G\hookrightarrow(\ell^\infty(G)\otimes \mathrm{Z}(A''))\rtimes_{\tau\otimes\alpha'', \red}G,$$ yields a conditional expectation onto a copy of~$\mathrm{Z}(A'')\rtimes_{\alpha'',\red}G$. Since $G$ is exact, the diagonal partial $G$-action $\tau\otimes\alpha''$ on $\ell^\infty(G)\otimes \mathrm{Z}(A'')$ has the approximation property. So the associated partial crossed product is nuclear because $\mathrm{Z}(A'')$ is commutative.  Hence $\mathrm{Z}(A'')\rtimes_{\alpha'',\red}G$ is nuclear as wanted.
\end{proof}

We observe that the equivalence between the statements in the theorem below was established in the context of global actions of exact locally compact groups by Buss, Echterhoff and 
Willett in~\cite[Theorem~5.16 and Corollary~5.17]{buss2020amenability}. The corresponding result for actions of exact discrete groups was proved earlier by Matsumura with the assumption that $A$ is unital and nuclear \cite{Matsumura}.

\begin{thm}\label{thm:noncommutative} Let~$G$ be an exact discrete group and let~$(\{A_g\}_{g\in G},\{\alpha_g\}_{g\in G})$ be a partial action of~$G$ on a $\Cst$\nb-algebra~$A$. Then the following are equivalent:

\begin{enumerate}
\item[\rm{(i)}] $\alpha$ has the approximation property;

\item[\rm{(ii)}] $(A\otimes_\max B)\rtimes_{\alpha\otimes\beta}G=(A\otimes_\max B)\rtimes_{\alpha\otimes\beta, \red}G$ for every partial action $(\{B_g\}_{g\in G},\{\beta_g\}_{g\in G})$ of~$G$ on a $\Cst$\nb-algebra~$B$;
\item[\rm{(iii)}] $(A\otimes_\max A^{\mathrm{op}})\rtimes_{\alpha\otimes\alpha^{\mathrm{op}}}G=(A\otimes_\max A^{\mathrm{op}})\rtimes_{\alpha\otimes\alpha^{\mathrm{op}}, \red}G$.
\end{enumerate}
\begin{proof} Suppose that $\alpha$ has the approximation property. By Corollary~\ref{cor:nuclearity-diag}, the diagonal partial action  $(\{A_g\otimes_\max B_g\}_{g\in G},\{\alpha_g\otimes\beta_g\}_{g\in G})$ also has the approximation property and therefore $(A\otimes_\max B)\rtimes_{\alpha\otimes\beta}G=(A\otimes_\max B)\rtimes_{\alpha\otimes\beta, \red}G$. This gives the implication $\text{(i)}\Rightarrow\text{(ii)}$, while $\text{(ii)}\Rightarrow\text{(iii)}$ is obvious. 

It remains to establish $\text{(iii)}\Rightarrow\text{(i)}.$ Since~$G$ is exact, Corollary~\ref{cor:Z-weak-containment} says that the partial crossed product $\mathrm{Z}(A'')\rtimes_{\alpha'',\red}G$ is nuclear. Hence the associated semidirect product bundle has the approximation property. Then so does $\Hilm[B]_\alpha$ by Corollary 6.10, Proposition~3.5 and Theorem 6.12 of~\cite{2019arXiv190703803A}.
\end{proof}
\end{thm}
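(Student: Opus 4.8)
The proof will establish the cycle (i)$\Rightarrow$(ii)$\Rightarrow$(iii)$\Rightarrow$(i), of which only the last implication requires real work. The implication (i)$\Rightarrow$(ii) follows directly from Corollary~\ref{cor:nuclearity-diag}: if $\alpha$ has the approximation property, then so does every diagonal partial action $\alpha\otimes\beta$, hence the full and reduced partial crossed products by $\alpha\otimes\beta$ agree for every $\beta$. The implication (ii)$\Rightarrow$(iii) is just the special case $\beta=\alpha^{\mathrm{op}}$, which is a legitimate partial action of $G$ on $A^{\mathrm{op}}$ as recalled in the subsection on diagonal partial actions.

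For (iii)$\Rightarrow$(i), I would first invoke Lemma~\ref{lem:bidual-ext}: the weak containment property for $\alpha\otimes\alpha^{\mathrm{op}}$ on $A\otimes_\max A^{\mathrm{op}}$ produces a ccp map $\varphi''\colon(\ell^\infty(G)\otimes A'')\rtimes_{\tau\otimes\alpha'',\red}G\to A''\rtimes_{\alpha'',\red}G$ that is grading-preserving and restricts to the identity on the $(1\otimes A_g'')\delta_g$ pieces. Next, since $G$ is exact, I would apply Corollary~\ref{cor:Z-weak-containment} to conclude that $\mathrm{Z}(A'')\rtimes_{\alpha'',\red}G$ is nuclear. (Indeed, Corollary~\ref{cor:Z-weak-containment} is precisely the output of feeding the hypothesis of (iii) through Lemma~\ref{lem:bidual-ext} and then restricting $\varphi''$ to the center, where the commutativity of $\mathrm{Z}(A'')$ combined with exactness of $G$ makes the ambient algebra $(\ell^\infty(G)\otimes\mathrm{Z}(A''))\rtimes_{\tau\otimes\alpha'',\red}G$ nuclear via Corollary~\ref{cor:exact-nuclearity-diag}, and the conditional expectation passes nuclearity down.)

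Having nuclearity of $\mathrm{Z}(A'')\rtimes_{\alpha'',\red}G$, the semidirect product bundle over $G$ with unit fibre $\mathrm{Z}(A'')$ has the approximation property, by the general fact (recalled after Definition~\ref{defn:AP}) that nuclearity of the reduced cross-sectional algebra of a Fell bundle implies the approximation property of the bundle, via \cite[Corollary~4.23]{buss2020amenability} and \cite[Theorem~6.12]{2019arXiv190703803A}. The final step is to transfer the approximation property from this center-bundle back to $\Hilm[B]_\alpha$ itself; this is exactly what Corollary~6.10, Proposition~3.5 and Theorem~6.12 of \cite{2019arXiv190703803A} provide, since the approximation property of the bidual partial action $\alpha''$ (equivalently, of the restriction to $\mathrm{Z}(A'')$) is equivalent to that of $\alpha$.

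**Main obstacle.**
The genuinely hard content is buried in Lemma~\ref{lem:bidual-ext}, which I would treat as given here — specifically the verification that $\pi_2''(a\otimes 1_g)=\pi_2''(a\otimes 1)$ for $a\in A_g''$, i.e.\ that the normal extension of the representation "forgets" the local projections. This rests on putting $A''$ in Haagerup standard form so that the opposite algebra is realised as the commutant via $m\mapsto Jm^*J$, and on the multiplicative-domain argument forcing the images of the relevant generators to interact correctly; once that lemma is in hand, the present theorem is a clean assembly of the corollaries. The only other point requiring care is checking that all the Fell-bundle approximation-property transfer results of \cite{2019arXiv190703803A} apply to the semidirect product bundle $\Hilm[B]_{\alpha''}$ and relate it correctly to $\Hilm[B]_\alpha$, but this is routine given that $\alpha$ and $\alpha''$ have the same approximation-theoretic behaviour.
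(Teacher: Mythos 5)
Your proposal is correct and follows essentially the same route as the paper: (i)$\Rightarrow$(ii) via Corollary~\ref{cor:nuclearity-diag}, (ii)$\Rightarrow$(iii) as the special case $B=A^{\mathrm{op}}$, and (iii)$\Rightarrow$(i) by invoking Corollary~\ref{cor:Z-weak-containment} (built on Lemma~\ref{lem:bidual-ext}) to get nuclearity of $\mathrm{Z}(A'')\rtimes_{\alpha'',\red}G$, hence the approximation property of that bundle, and then transferring it back to $\Hilm[B]_\alpha$ via Corollary~6.10, Proposition~3.5 and Theorem~6.12 of \cite{2019arXiv190703803A}. Your identification of Lemma~\ref{lem:bidual-ext} as the genuinely hard ingredient matches where the paper places the technical weight.
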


\section{Some applications}

In this section, we combine Theorem~\ref{thm:maintheorem} and a partial crossed product picture of a semigroup $\Cst$\nb-algebra due to Li \cite[Theorem~5.6.41]{CLEY} to show that the semigroup $\Cst$\nb-algebra of a monoid~$P$ is nuclear whenever~$P$ embeds into an exact discrete group and the left regular representation of~$P$ on~$\ell^2(P)$ implements an isomorphism between full and reduced $\Cst$\nb-algebras. We also apply Theorem~\ref{thm:maintheorem} to establish an equivalence between the weak containment property and nuclearity of the $\Cst$\nb-algebras in the case of $\Cst$\nb-algebras associated to separated graphs.

\subsection{Semigroup \texorpdfstring{$\mathrm{C}^*$}{C*}-algebras}  There is a natural concrete $\Cst$\nb-algebra attached to a left-cancellative semigroup~$P$ with unit element~$e$. This is the $\Cst$\nb-subalgebra of~$\Bound(\ell^2(P))$ generated by the canonical representation of~$P$ by isometries on~$\ell^2(P)$. Li introduced a construction of a universal $\Cst$\nb-algebra~$\Cst_s(P)$ associated to a semigroup in a general context in~\cite{Li:Semigroup_amenability}, following the work of Nica on $\Cst$\nb-algebras associated to positive cones of quasi-lattice orders \cite{Nica:Wiener--hopf_operators}. We recall the definition of the universal semigroup $\Cst$\nb-algebra attached to a submonoid of a group. The defining relations involve the family of constructible right ideals of~$P$, and therefore we recall how these are defined as well.

We employ the notation from \cite[Section~4.3]{Sehnem}. Let $\alpha=(p_1,p_2,\ldots,p_{2k})$ be a word in~$P$. We define  \begin{equation*}\label{eq:finite_set} F_{\alpha}=\{p_{2k}^{-1}p_{2k-1}, p_{2k}^{-1}p_{2k-1}p_{2k-2}^{-1}p_{2k-3},\ldots, p_{2k}^{-1}p_{2k-1}p_{2k-2}^{-1}\cdots p_2^{-1}p_1\}.\end{equation*}  Set $$K_{F_{\alpha}}\coloneqq P\cap p_{2k}^{-1}p_{2k-1}P\cap p_{2k}^{-1}p_{2k-1}p_{2k-2}^{-1}p_{2k-3}P\cap\ldots\cap p_{2k}^{-1}p_{2k-1}p_{2k-2}^{-1}\cdots p_2^{-1}p_1P.$$ Then $K_{F_{\alpha}}$ is a right ideal in~$P$. This is the right ideal $$ p_{2k}^{-1}p_{2k-1}p_{2k-2}^{-1}\cdots p_2^{-1}p_1P$$ in the notation of Li~\cite{Li:Semigroup_amenability}. It follows from equation (32) of~\cite{Li:Semigroup_amenability} that $$\Hilm[J]\coloneqq\{K_{F_{\alpha}}\mid \alpha=(p_1,p_2, \ldots,p_{2k}),\,k\geq 1, p_{i}\in P,\,\text{ for } i=1,\ldots, 2k\}\cup\{\emptyset\}$$ is closed under intersection. This is the smallest family of right ideals of~$P$ containing~$P$ and~$\emptyset$ and closed under taking images and pre-images of the left multiplication action by elements in~$P$. That is, if~$S\in\mathcal{J}$ and $p\in P$, then~$pS\in\mathcal{J}$  and~$p^{-1}S\cap P\in\mathcal{J}$. We refer to~$\Hilm[J]$ as the family of "constructible" right ideals of~$P$.

The family~$\Hilm[J]$ of right ideals of~$P$ is called \emph{independent} \cite[Definition 2.26]{Li:Semigroup_amenability} if given a right ideal of~$P$ of the form $$S=\underset{i=1}{\overset{m}{\bigcup}}S_i,$$ with $ S\in\Hilm[J]$ and $S_i\in\mathcal{J}$ for all $i\in\{1,\ldots,m\}$, then there exists $i\in\{1,\ldots,m\}$ such that~$S=S_i$. If~$\Hilm[J]$ is independent, we simply say that~$P$ satisfies independence.

\begin{defn}[\cite{Li:Semigroup_amenability}*{Definition 3.2}]\label{defn:fullcstarP} Let $P$ be a submonoid of a group~$G$ and let $\Hilm[J]$ be the family of constructible ideals of $P$. The \emph{semigroup $\Cst$\nb-algebra} of~$P$, denoted by~$\Cst_s(P)$, is the universal $\Cst$\nb-algebra generated by a family of isometries~$\{v_p\mid p\in P\}$ and projections~$\{e_S\mid S\in\Hilm[J] \}$ satisfying the following:
\begin{enumerate}\label{defn:semigroupCstaralgebra}
\item[\textup{(i)}] $v_pv_q=v_{pq}$
\item[\textup{(ii)}] $e_{\emptyset}=0$
\item[\textup{(iii)}] $v_{p_1}^*v_{p_2}\cdots  v_{p_{2k-1}}^*v_{p_{2k}}=e_{K_{F_{\alpha}}}$ whenever~$\alpha=(p_1,p_2,\ldots,p_{2k})$ is a word in~$P$ satisfying $p_1^{-1}p_2\ldots p_{2k-1}^{-1}p_{2k}=e$ in~$G$.
\end{enumerate}
\end{defn}

We observe that $\Cst_s(P)$ does not depend on the embedding $P\hookrightarrow G$  (see \cite[p. 4327]{Li:Semigroup_amenability}). 

\begin{rem} It follows from Corollary~2.10 and Lemma~3.3 of~\cite{Li:Semigroup_amenability} that $\Cst_s(P)$ is generated as a $\Cst$\nb-algebra by the semigroup of isometries~$\{v_p\mid p\in P\}$. 
\end{rem}

Next we recall how the left regular representation of~$P$ in $\Bound(\ell^2(P))$ is defined. For each $p\in P$, let $V_p\colon \ell^2(P)\to \ell^2(P)$ be given by $$V_p(\delta_q)=\delta_{pq} \qquad \qquad (q\in P),$$ where $\delta_q$ denotes the characteristic function of~$\{q\}$ viewed as a vector of the canonical orthonormal basis of~$\ell^2(P)$. Since~$P$ is left-cancellative, $V_p$ is an isometry with $$
V_p^*(\delta_q)=
\begin{cases}
\delta_{p^{-1}q}& \text{if } q\in pP,\\
0 &\text{otherwise}.
\end{cases}$$ In particular, $V_e=1$. The set of isometries $\{V_p\mid p\in P\}$ satisfies the relation $$V_pV_q=V_{pq}$$ for all $p,q\in P$. Thus $p\mapsto V_p$ is an isometric representation of~$P$. In addition, it follows from \cite[Lemma~3.1]{Li:Semigroup_amenability} that if $p_1^{-1}p_2\ldots p_{2k-1}^{-1}p_{2k}=e$, then $$V_{p_1}^*V_{p_2}\cdots  V_{p_{2k-1}}^*V_{p_{2k}}=\mathbb{1}_{K_{F_{\alpha}}},$$ where $\mathbb{1}_{K_{F_{\alpha}}}\in\ell^\infty(P)$ is the characteristic function of the right ideal $K_{F_{\alpha}}$, acting on~$\ell^2(P)$ by left multiplication.

\begin{defn} The \emph{reduced semigroup $\Cst$\nb-algebra} associated to~$P$ , denoted by $\Cst_{\lambda}(P)$, is the $\Cst$\nb-subalgebra of $\Bound(\ell^2(P))$ generated by the semigroup of isometries $\{V_p\mid p\in P\}$.
\end{defn}

The map $p\mapsto V_p$ is called the \emph{left regular representation} of~$P$. It induces a surjective \Star homomorphism $\lambda^+\colon\Cst_s(P)\to\Cst_{\lambda}(P)$ mapping $v_p$ to $V_p$, which we also refer to as the left regular representation of~$\Cst_s(P)$.

We denote by $D_r\subseteq \Cst_{\lambda}(P)$ the $\Cst$\nb-subalgebra of~$\Cst_{\lambda}(P)\subseteq\Bound(\ell^2(P))$ generated by $$\{\mathbb{1}_{K_{F_{\alpha}}}\mid \alpha=(p_1,\ldots, p_{2k})\text{ is a word in }P\}.$$ There is a faithful conditional expectation $$E_r\colon \Cst_{\lambda}(P)\to D_r$$ which is determined by $$\braket{E_r(b)\delta_q}{\delta_q}=\braket{b\delta_q}{\delta_q}$$ for all $b\in \Cst_{\lambda}(P)$ and $q\in P$. By \cite[Corollary~2.27]{Li:Semigroup_amenability}, $P$ satisfies independence if and only if the restriction of the left regular representation to the $\Cst$\nb-subalgebra $D_s\subseteq \Cst_s(P)$ generated by the projections $\{e_S\mid S\in\Hilm[J] \}$  is an isomorphism onto~$D_r$.

Whenever $P$ embeds into a group~$G$, it follows from the work of Li \cite[Theorem~5.6.41]{CLEY} that $\Cst_{\lambda}(P)$ can be realised as a reduced partial crossed product associated to a partial action of~$G$ on the diagonal subalgebra~$D_r$. When $P$ satisfies independence, the corresponding full partial crossed product is then isomorphic to $\Cst_s(P)$. We will sketch the main ideas in the proof that a semigroup $\Cst$\nb-algebra can be obtained from a partial action. 

Following \cite{CLEY}, we let $I_l(P)\subseteq\Cst_{\lambda}(P)$ be the smallest semigroup of partial isometries generated by the $V_p$'s (see also \cite[Definition~5.3]{LI2013626}). Specifically, $$I_l(P)=\{V_{p_1}^*V_{p_2}\cdots  V_{p_{2k-1}}^*V_{p_{2k}}\mid p_i\in P, i\in\{1,\dots, 2k\}, k\geq 0\}.$$ For $V=V_{p_1}^*V_{p_2}\cdots  V_{p_{2k-1}}^*V_{p_{2k}}\in I_l(P)$, $V^*V=\mathbb{1}_{K_{F_{\alpha}}}$ with $\alpha=(p_1,p_2,\ldots, p_{2k})$. In particular, the range and source projections associated to the partial isometries in~$I_l(P)$ commute among themselves.

\begin{lem}[\cite{LI2013626}*{Lemma~5.4}]  Let $I_l(P)^{\times}=I_l(P)\setminus\{0\}$ and let  $P\hookrightarrow G$ be an embedding. Then the map $\sigma\colon I_l(P)^\times\to G$ which sends a partial isometry $V=V_{p_1}^*V_{p_2}\cdots  V_{p_{2k-1}}^*V_{p_{2k}}$ to $\sigma(V)=p_1^{-1}p_{2}\cdots p_{2k-1}^{-1}p_{2k}$ is well-defined. Precisely, the element $\sigma(V)$ is the unique $g\in G$ with $V(\delta_q)=\delta_{gq}$ for all $q\in P$ such that $V(\delta_q)\neq 0$. Moreover, the function $V\mapsto \sigma(V)$ satisfies $\sigma(V^*)=\sigma(V)^{-1}$ and $\sigma(V_1)\sigma(V_2)=\sigma(V_1V_2)$ whenever $V_1V_2\neq 0$.
\end{lem}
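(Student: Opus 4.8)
The plan is to reduce the whole statement to a single inductive computation describing the action of an arbitrary word in the $V_p$'s and $V_p^*$'s on the canonical orthonormal basis $\{\delta_q\}_{q\in P}$ of $\ell^2(P)$, and then to read off well-definedness, the uniqueness characterisation, and the two multiplicativity properties from the resulting intrinsic description of $\sigma$.

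First I would establish the following claim by induction on the length $n$: if $W=S_1S_2\cdots S_n$ with each $S_i\in\{V_p\mid p\in P\}\cup\{V_p^*\mid p\in P\}$, and $s_i\in G$ denotes the associated group element ($s_i=p$ when $S_i=V_p$, and $s_i=p^{-1}$ when $S_i=V_p^*$), then for every $q\in P$ one has either $W\delta_q=0$, or $W\delta_q=\delta_{s_1s_2\cdots s_nq}$, with the product $s_1s_2\cdots s_nq$ computed in $G$ and automatically lying in $P$ in the second case. The case $n=1$ is precisely the formulas for $V_p$ and $V_p^*$ recalled just before the lemma. For the inductive step, apply the inductive hypothesis to the word $S_2\cdots S_n$ (so $S_2\cdots S_n\delta_q$ is $0$ or $\delta_{s_2\cdots s_nq}$ with $s_2\cdots s_nq\in P$) and then the case $n=1$ to $S_1$ applied to that vector.

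Next, applying the claim to a word $W=V_{p_1}^*V_{p_2}\cdots V_{p_{2k-1}}^*V_{p_{2k}}$ with $W=V\neq 0$ shows that $g\coloneqq p_1^{-1}p_2\cdots p_{2k-1}^{-1}p_{2k}$ satisfies $V\delta_q=\delta_{gq}$ for all $q\in P$ with $V\delta_q\neq 0$. Since $V\neq 0$, there is $q_0\in P$ with $V\delta_{q_0}=\delta_{q'}$ for some $q'\in P$, and then $g=q'q_0^{-1}$ is determined by the operator $V$ alone, with no reference to the chosen word $W$; this is exactly what makes $\sigma$ well defined. Uniqueness of $g$ with the stated property is immediate: any $g'\in G$ with $V\delta_{q_0}=\delta_{g'q_0}$ forces $g'q_0=gq_0$, hence $g'=g$ in $G$.

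Finally I would deduce the two identities from this intrinsic description. For $\sigma(V^*)=\sigma(V)^{-1}$: since the source and range projections $V^*V$ and $VV^*$ are diagonal (of the form $\mathbb{1}_{K_{F_\alpha}}$, as recalled above), $V$ restricts to a bijection $\delta_q\mapsto\delta_{\sigma(V)q}$ from the support of $V^*V$ onto that of $VV^*$, so $V^*$ sends $\delta_{\sigma(V)q}\mapsto\delta_q=\delta_{\sigma(V)^{-1}(\sigma(V)q)}$, whence $\sigma(V^*)=\sigma(V)^{-1}$. For $\sigma(V_1V_2)=\sigma(V_1)\sigma(V_2)$ when $V_1V_2\neq 0$: choose $q\in P$ with $V_1V_2\delta_q\neq 0$; then $V_2\delta_q=\delta_{\sigma(V_2)q}$ is nonzero and $V_1\delta_{\sigma(V_2)q}=\delta_{\sigma(V_1)\sigma(V_2)q}$ is nonzero, so $V_1V_2\delta_q=\delta_{\sigma(V_1)\sigma(V_2)q}$ and the uniqueness characterisation yields $\sigma(V_1V_2)=\sigma(V_1)\sigma(V_2)$. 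The only genuinely delicate point is the well-definedness of $\sigma$ — that the assigned group element is insensitive to the highly non-unique word representation of a given partial isometry in $I_l(P)$ — and this is handled at a stroke by the observation that $\sigma(V)$ can be recovered intrinsically as $q'q_0^{-1}$ from $V$ and any $q_0$ in its domain; everything else is a routine induction and bookkeeping.
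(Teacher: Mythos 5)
Your proposal is correct. Note that the paper itself offers no proof of this statement -- it is quoted verbatim from Li's work (Lemma~5.4 of the cited paper on nuclearity of semigroup $\Cst$\nb-algebras) -- and your argument is essentially the standard one behind that result: the induction showing that every word in the $V_p$'s and $V_p^*$'s acts on the basis $\{\delta_q\}_{q\in P}$ as a partial translation by the associated group element, the intrinsic recovery of $\sigma(V)$ as $q'q_0^{-1}$ from any $q_0$ in the domain (which gives well-definedness and uniqueness at once), and the two identities read off from this description; the only implicit ingredient you use, that $V^*$ and $V_1V_2$ again lie in $I_l(P)$, holds by definition since $I_l(P)$ is the smallest \Star closed semigroup generated by the $V_p$'s.
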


For each~$g\in G$, let $$A_{g^{-1}}\coloneqq\overline{\mathrm{span}}\{V^*V\mid \sigma(V)=g\}\subseteq D_r.$$ So $A_e=D_r$ because $V=V_{p_1}^*V_{p_2}\cdots  V_{p_{2k-1}}^*V_{p_{2k}}$ is the projection onto the ideal $K_{F_{\alpha}}$ whenever $\sigma(V)=e$, where $\alpha=(p_1, \ldots, p_{2k})$. Also, $A_{g^{-1}}$ is an ideal of~$D_r$. This is so since $V^*VV'=(V')^*V^*VV'$ for all $V'\in I_l(P)^\times$ with $\sigma(V')=e$, and $\sigma(VV')=g$ if $VV'\neq 0$. 

\begin{prop}[\cite{CLEY}*{Section 5.5.2}]\label{prop:monoid-par-action} Let $P$ be a submonoid of a group~$G$. Then there is a unique \Star isomorphism $\gamma_g\colon A_{g^{-1}}\to A_g$ mapping $V^*V\in A_{g^{-1}}$ to $VV^*\in A_g$. Moreover, $\gamma=(\{A_g\}_{g\in G},\{\gamma_g\}_{g\in G})$ is a partial action of~$G$ on~$D_r$.
\end{prop}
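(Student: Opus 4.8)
The plan is to realise all the $\Cst$\nb-algebras in sight concretely as algebras of functions on $P$, where the maps $\gamma_g$ become translations, and then to read off the partial action axioms from that picture. Since $D_r$ is generated inside $\Bound(\ell^2(P))$ by the multiplication operators $\mathbb{1}_{K_{F_\alpha}}\in\ell^\infty(P)$, it is a commutative $\Cst$\nb-algebra of diagonal operators, so its $\Cst$\nb-norm is the supremum norm and every $A_g$ is a $\Cst$\nb-subalgebra of $\ell^\infty(P)$. For $V\in I_l(P)^\times$, a computation with the orthonormal basis $\{\delta_q\}_{q\in P}$ together with the identity $V\delta_q=\delta_{\sigma(V)q}$ from \cite[Lemma~5.4]{LI2013626} shows that $V^*V=\mathbb{1}_{\operatorname{dom}(V)}$ and $VV^*=\mathbb{1}_{\operatorname{ran}(V)}$, where $\operatorname{dom}(V)\coloneqq\{q\in P:V\delta_q\neq 0\}$ and $\operatorname{ran}(V)\coloneqq\{\sigma(V)q:q\in\operatorname{dom}(V)\}\subseteq P$. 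Put $\Omega_g\coloneqq\bigcup\{\operatorname{ran}(V):V\in I_l(P)^\times,\,\sigma(V)=g\}$, so that $\Omega_{g^{-1}}=\bigcup\{\operatorname{dom}(V):\sigma(V)=g\}$, and $A_{g^{-1}}$ and $A_g$ consist of functions on $P$ vanishing off $\Omega_{g^{-1}}$ and $\Omega_g$, respectively. The key observation is that left translation $L_g\colon q\mapsto gq$ restricts to a bijection $\Omega_{g^{-1}}\to\Omega_g$ with inverse $L_{g^{-1}}$: if $q\in\operatorname{dom}(V)$ with $\sigma(V)=g$, then $gq=\sigma(V)q\in\operatorname{ran}(V)\subseteq\Omega_g$, and symmetrically for $L_{g^{-1}}$.

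I would then define $\gamma_g$ to be the restriction to $A_{g^{-1}}$ of the $\Cst$\nb-isomorphism $\Phi_g\colon\ell^\infty(\Omega_{g^{-1}})\to\ell^\infty(\Omega_g)$, $f\mapsto f\circ L_g^{-1}$. This sends each generator $V^*V=\mathbb{1}_{\operatorname{dom}(V)}$ to $\mathbb{1}_{L_g(\operatorname{dom}(V))}=\mathbb{1}_{\operatorname{ran}(V)}=VV^*$; being an isometric $\ast$\nb-homomorphism, $\gamma_g$ therefore maps $A_{g^{-1}}=\overline{\operatorname{span}}\{V^*V:\sigma(V)=g\}$ onto $A_g=\overline{\operatorname{span}}\{VV^*:\sigma(V)=g\}$. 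Since the set $\{V^*V:\sigma(V)=g\}$ has dense linear span in $A_{g^{-1}}$, any $\ast$\nb-homomorphism $A_{g^{-1}}\to A_g$ carrying $V^*V$ to $VV^*$ for all such $V$ must coincide with $\gamma_g$; this gives existence and uniqueness.

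Next I would verify the three conditions of Definition~\ref{defn: partial-action}. For (i): if $\sigma(V)=e$ then $V=V_{p_1}^*\cdots V_{p_{2k}}$ with $p_1^{-1}p_2\cdots p_{2k-1}^{-1}p_{2k}=e$, whence $V=\mathbb{1}_{K_{F_\alpha}}$ by \cite[Lemma~3.1]{Li:Semigroup_amenability}; thus $A_e=\overline{\operatorname{span}}\{\mathbb{1}_{K_{F_\alpha}}\}=D_r$, and $\gamma_e$ is induced by $L_e=\operatorname{id}$, so $\gamma_e=\operatorname{id}$. For (iii): let $f\in A_{h^{-1}}\cap A_{(gh)^{-1}}$. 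By (ii), $\gamma_h(A_{h^{-1}}\cap A_{(gh)^{-1}})\subseteq A_{h(gh)^{-1}}=A_{g^{-1}}$, so $\gamma_h(f)\in A_{g^{-1}}$ and $\gamma_g\circ\gamma_h$ is defined at $f$; reading off that $\gamma_g$ acts by $f\mapsto f\circ L_g^{-1}$ and using $L_g\circ L_h=L_{gh}$ (associativity in $G$) then gives $\gamma_g(\gamma_h(f))=(f\circ L_h^{-1})\circ L_g^{-1}=f\circ(L_g\circ L_h)^{-1}=f\circ L_{gh}^{-1}=\gamma_{gh}(f)$. The only condition needing a genuine computation is (ii): as $A_{g^{-1}}$ and $A_h$ are ideals of the commutative algebra $D_r$, with $A_h=\overline{\operatorname{span}}\{W^*W:\sigma(W)=h^{-1}\}$, the products $(V^*V)(W^*W)$ over $\sigma(V)=g$, $\sigma(W)=h^{-1}$ have dense linear span in $A_{g^{-1}}\cap A_h$; for such a product $x\coloneqq(V^*V)(W^*W)\leq V^*V$, a basis computation (using $\operatorname{supp}(x)\subseteq\operatorname{dom}(V)$ and $VV^*V=V$) gives $\gamma_g(x)=VxV^*=VW^*WV^*$, so that with $U\coloneqq VW^*$
\[
\gamma_g\big((V^*V)(W^*W)\big)=VW^*WV^*=(VW^*)(VW^*)^*=UU^*.
\]
If $U\neq 0$ then $\sigma(U)=\sigma(V)\sigma(W^*)=gh$ by \cite[Lemma~5.4]{LI2013626}, so $UU^*\in A_{gh}$; if $U=0$ the expression is $0\in A_{gh}$. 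Hence $\gamma_g(A_{g^{-1}}\cap A_h)\subseteq A_{gh}$.

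The main obstacle is the first step: a priori the rule $V^*V\mapsto VV^*$ is merely a prescription on generators, and the content of the first assertion is that it extends to a $\ast$\nb-isomorphism between the $\Cst$\nb-algebras $A_{g^{-1}}$ and $A_g$. Passing to the concrete function picture on $P$ removes this difficulty, since there $\gamma_g$ is literally the translation $f\mapsto f\circ L_g^{-1}$, visibly a $\ast$\nb-isomorphism; after that, all three partial action axioms become elementary bookkeeping about the partial bijection $L_g$ of $P$, with (ii) the only one calling for a short operator computation.
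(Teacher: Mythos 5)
Your proof is correct and is in substance the same as the paper's: the paper makes $\gamma_g$ well defined by viewing $\Bound(\ell^2(P))\subseteq\Bound(\ell^2(G))$ and writing $\gamma_g$ as conjugation by the regular representation unitary $\lambda_g$, which on the diagonal algebra $D_r$ is exactly your translation map $f\mapsto f\circ L_g^{-1}$, and its verification of axiom (ii) is the identical computation $\gamma_g(V^*VW^*W)=VW^*WV^*$ with $\sigma(VW^*)=gh$. Your function-space packaging merely replaces the paper's use of $\lambda_{gh}=\lambda_g\lambda_h$ for axiom (iii) by $L_g\circ L_h=L_{gh}$, so the two arguments coincide apart from presentation.
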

\begin{proof} We explain how $\gamma_g$ is constructed. View $\Bound(\ell^2(P))$ as a closed $\Cst$\nb-subalgebra of~$\Bound(\ell^2(G))$ using the canonical embedding of~$\ell^2(P)$ as a closed subspace of~$\ell^2(G)$. Let $\lambda\colon G\to\Bound(\ell^2(G))$ be the left regular representation of~$G$. Then $\lambda_gV^*V=V$ and so $\lambda_g V^*V\lambda_{g^{-1}}=VV^*$ for all $V\in I_l(P)^\times$ with $\sigma(V)=g$. Hence the map $V^*V\mapsto VV^*$  is part of a well-defined \Star isomorphism $\gamma_g\colon A_{g^{-1}}\to A_g$. Uniqueness of~$\gamma_g$ is clear. 

In order to see that $\gamma=(\{A_g\}_{g\in G},\{\gamma_g\}_{g\in G})$ is indeed a partial action on~$D_r$, notice that for all $V,W\in I_l(P)^\times$ such that $\sigma(V)=g$ and $\sigma(W)=h^{-1}$, we have \[\gamma_{g}(V^*VW^*W)=\lambda_gV^*VW^*W\lambda_{g^{-1}}=\lambda_gV^*VW^*WV^*V\lambda_{g^{-1}}=VW^*WV^*.\] This shows that $\gamma$ satisfies axiom (ii) of Definition~\ref{defn: partial-action} since $\sigma(WV^*)=(gh)^{-1}$ if $WV^*\neq 0$. Now that $\gamma$ also satisfies axiom (iii) of Definition~\ref{defn: partial-action} follows because $\gamma_{gh}$ is simply conjugation by the unitary $\lambda_{gh}$, and $\lambda_{gh}=\lambda_g\lambda_h$.
\end{proof}

\begin{thm}[\cite{CLEY}*{Theorem~5.6.41}]\label{thm:partial-picture} Let $P$ be a submonoid of a group~$G$. Let $\gamma=(\{A_g\}_{g\in G},\{\gamma_g\}_{g\in G})$  be the partial action of~$G$ on~$D_r$ from Proposition~\textup{\ref{prop:monoid-par-action}}. Then $\Cst_{\lambda}(P)\cong D_r\rtimes_{\gamma, \red}G$ with an isomorphism that sends an isometry $V_p$ to~$\mathbb{1}_{pP}\delta_p\in D_r\rtimes_{\gamma, \red}G$. If, in addition, $P$ satisfies independence, then $v_p\mapsto \mathbb{1}_{pP}\delta_p$ induces an isomorphism $$\Cst_s(P)\cong D_r\rtimes_\gamma G.$$ 
\begin{proof} We sketch the main ideas in the proof of~\cite[Theorem~5.6.41]{CLEY}. For each $g\in G$, define a closed subspace of~$\Cst_{\lambda}(P)$ by setting $$B_g\coloneqq\overline{\mathrm{span}}\{V\in I_l(P)^\times\mid \sigma(V)=g\}.$$ It follows that $\{B_g\}_{g\in G}$ is a topological $G$\nb-grading for~$\Cst_{\lambda}(P)$ since the latter carries a canonical faithful conditional expectation onto~$D_r=B_e$. See Definitions~16.2 and 19.2 of \cite{Exel:Partial_dynamical} for further details. Li identifies in~\cite[Theorem~5.6.41]{CLEY} the Fell bundle $(B_g)_{g\in G}$ with the semirect product bundle $\Hilm[B]_\gamma$ of $\gamma$, via an isomorphism that sends a partial isometry~$V$ of the generating set~$ I_l(P)\cap B_g$ to $VV^*\delta_g\in \Hilm[B]_{\gamma_g}=A_g\delta_g$, for each~$g\in G$. In particular, $V_p$ is mapped to $\mathbb{1}_{pP}\delta_p$ for all~$p\in P$. This gives rise to an isomorphism $\Cst_{\lambda}(P)\cong D_r\rtimes_{\gamma,\red}G$ by \cite[Propostion~19.8]{Exel:Partial_dynamical}  because $E_r\colon \Cst_{\lambda}(P)\to D_r$ is a faithful conditional expectation.

We deduce by universal property of~$\Cst_s(P)$ that the map which sends $v_p$ to $\mathbb{1}_{pP}\delta_p$ and a projection $e_{K_{F_{\alpha}}}\in D_s$ to~$\mathbb{1}_{K_{F_{\alpha}}}\delta_e$ yields a surjective \Star homomorphism $\psi\colon \Cst_s(P)\to D_r\rtimes_\gamma G$.  Assume that $P$ satisfies independence. Then $\psi$ is injective on~$D_s$ by \cite[Corollary~2.27]{Li:Semigroup_amenability}. In order to construct an inverse for~$\psi$, consider, for each~$g\in G$, the closed subspace of~$\Cst_s(P)$ given by $$B'_g\coloneqq\overline{\mathrm{span}}\{v_{p_{1}}^*v_{p_2}\ldots v_{p_{2k-1}}^*v_{p_{2k}}\mid p_1^{-1}p_2\cdots p_{2k-1}^{-1}p_{2k}=g\}.$$ It follows that the left regular representation $\lambda^+\colon \Cst_s(P)\to \Cst_{\lambda}(P)$ restricts to an isomorphism from $B_g'$ onto~$B_g\subseteq\Cst_{\lambda}(P)$ for all~$g\in G$, since~$\lambda^+$ is faithful on~$B'_e=D_s$ and $b^*b\in B'_e$ for all~$b\in B'_g$. As for the reduced semigroup $\Cst$\nb-algebra, the collection of closed subspaces $\{B'_g\}_{g\in G}$ forms a topological $G$\nb-grading for~$\Cst_s(P)$. 

To establish an isomorphism between $\Cst_s(P)$ and the full partial crossed product $D_r\rtimes_{\gamma}G$ when $P$ satisfies independence, we let $\tilde{\pi}$ denote the representation of~$D_r\rtimes_{\gamma}G$ on~$\ell^2(P)$ obtained from the isomorphism of Fell bundles $(B_g)_{g\in G}$ and $\Hilm[B]_\gamma$. Let $(\pi,w)$ be the unique covariant pair so that $\tilde{\pi}=\pi\times v$. We can define a representation $\psi'$ of $\gamma=(\{A_g\}_{g\in G},\{\gamma_g\}_{g\in G})$ in $\Cst_s(P)$ by setting $$\psi'(a\delta_g)=(\lambda_g^+)^{-1}((\pi\times w)(a\delta_g))$$ for all $g\in G$ and $a\in A_g$. Here $(\lambda_g^+)^{-1}$ stands for the inverse of $\lambda^+\restriction_{B'_g}\colon B'_g\to B_g$. So~$\psi'$ gives a representation of $\Hilm[B]_\alpha$ satisfying $\psi'(\mathbb{1}_{pP}\delta_p)=v_p$ for all~$p\in P$. The induced \Star homomorphism $\tilde{\psi'}\colon D_r\rtimes_\alpha G\to\Cst_s(P)$ is the desired  inverse of~$\psi$.
\end{proof}
\end{thm}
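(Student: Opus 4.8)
The plan is to realise $\Cst_\lambda(P)$ as the reduced cross-sectional $\Cst$\nb-algebra of a concrete Fell bundle inside $\Cst_\lambda(P)$, to identify that bundle with the semidirect product bundle $\Hilm[B]_\gamma$ of the partial action $\gamma$ from Proposition~\ref{prop:monoid-par-action}, and then, under independence, to upgrade the left regular representation into an isomorphism between $\Cst_s(P)$ and the \emph{full} cross-sectional $\Cst$\nb-algebra of the same bundle.

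\textbf{Step 1: the reduced picture.} For each $g\in G$ I set $B_g\coloneqq\overline{\mathrm{span}}\{V\in I_l(P)^\times\mid \sigma(V)=g\}\subseteq\Cst_\lambda(P)$, so that $B_e=D_r$. Using the multiplicativity of $\sigma$ from \cite[Lemma~5.4]{LI2013626} (valid when the product is nonzero) and the fact that the range and source projections of elements of $I_l(P)$ all lie in the commutative algebra $D_r$, one checks that $B_gB_h\subseteq B_{gh}$, that $B_g^*=B_{g^{-1}}$, that $\overline{\mathrm{span}}\bigcup_{g}B_g=\Cst_\lambda(P)$, and that the faithful conditional expectation $E_r\colon\Cst_\lambda(P)\to D_r$ vanishes on $B_g$ for $g\neq e$; hence $(B_g)_{g\in G}$ is a topological $G$\nb-grading of $\Cst_\lambda(P)$ in the sense of \cite[Definition~19.2]{Exel:Partial_dynamical}. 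I would then construct an isomorphism of Fell bundles $(B_g)_{g\in G}\cong\Hilm[B]_\gamma$. The elementary identity here is $V=\lambda_gV^*V$ for $V\in I_l(P)^\times$ with $\sigma(V)=g$ (viewing operators on $\ell^2(P)$ inside $\Bound(\ell^2(G))$), from which $\sum_ic_iV_i=\lambda_g\sum_ic_iV_i^*V_i$; since $\gamma_g$ is an isometric $\ast$\nb-isomorphism, this shows that $V\mapsto VV^*\delta_g=\gamma_g(V^*V)\delta_g\in A_g\delta_g=\Hilm[B]_{\gamma_g}$ is well defined and isometric on finite sums and extends to an isometric linear bijection $\Phi_g\colon B_g\to A_g\delta_g$. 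Compatibility with the involution is immediate from $\gamma_{g^{-1}}(VV^*)=V^*V$, and, using $\lambda_gV^*V=V$ and $V^*V\lambda_{g^{-1}}=V^*$, multiplicativity reduces to the identity $VWW^*V^*=\gamma_g\bigl(V^*V\,WW^*\bigr)$ together with the equivalence $VW=0\iff V^*V\,WW^*=0$. With the bundle isomorphism in place, \cite[Proposition~19.8]{Exel:Partial_dynamical} yields $\Cst_\lambda(P)\cong\Cst_\red\bigl((B_g)_{g\in G}\bigr)\cong D_r\rtimes_{\gamma, \red}G$, and by construction $V_p$ is carried to $V_pV_p^*\delta_p=\mathbb{1}_{pP}\delta_p$.

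\textbf{Step 2: the full picture under independence.} Assume $P$ satisfies independence. First, by the universal property in Definition~\ref{defn:fullcstarP}, I would verify that the assignments $v_p\mapsto\mathbb{1}_{pP}\delta_p$ and $e_S\mapsto\mathbb{1}_S\delta_e$ respect relations (i)--(iii): relation~(i) follows from the Fell bundle computation $(\mathbb{1}_{pP}\delta_p)(\mathbb{1}_{qP}\delta_q)=\gamma_p(\mathbb{1}_{qP})\delta_{pq}=\mathbb{1}_{pqP}\delta_{pq}$ (using $\gamma_{p^{-1}}(\mathbb{1}_{pP})=1$ and $\lambda_pV_q=V_{pq}$), relation~(ii) is trivial, and relation~(iii) is obtained by composing with the canonical surjection $\Lambda\colon D_r\rtimes_\gamma G\to D_r\rtimes_{\gamma, \red}G$, which is isometric on the unit fibre $D_r\delta_e$, together with the identity $V_{p_1}^*V_{p_2}\cdots V_{p_{2k}}=\mathbb{1}_{K_{F_\alpha}}$. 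This produces a surjective $\ast$\nb-homomorphism $\psi\colon\Cst_s(P)\to D_r\rtimes_\gamma G$. To invert $\psi$ I would set $B'_g\coloneqq\overline{\mathrm{span}}\{v_{p_1}^*v_{p_2}\cdots v_{p_{2k}}\mid p_1^{-1}p_2\cdots p_{2k-1}^{-1}p_{2k}=g\}\subseteq\Cst_s(P)$; since $b^*b\in B'_e=D_s$ for every $b\in B'_g$ and, by independence, $\lambda^+\restriction_{D_s}$ is isometric onto $D_r$ by \cite[Corollary~2.27]{Li:Semigroup_amenability}, each $\lambda^+\restriction_{B'_g}\colon B'_g\to B_g$ is an isometric isomorphism and $(B'_g)_{g\in G}$ is a topological $G$\nb-grading of $\Cst_s(P)$. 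Composing the inverses $(\lambda^+\restriction_{B'_g})^{-1}$ with the representation $D_r\rtimes_\gamma G\to\Cst_\lambda(P)$ from Step~1 gives a representation of $\Hilm[B]_\gamma$ inside $\Cst_s(P)$ sending $\mathbb{1}_{pP}\delta_p$ to $v_p$; its integrated form $D_r\rtimes_\gamma G\to\Cst_s(P)$ then agrees with a two-sided inverse of $\psi$ on the generators $\{v_p\}$ and $\{\mathbb{1}_{pP}\delta_p\}$, hence on all of $\Cst_s(P)$ and $D_r\rtimes_\gamma G$.

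\textbf{Where the difficulty lies.} The grading axioms and the Fell bundle identification in Step~1 are bookkeeping once the identity $V=\lambda_gV^*V$ is exploited. The genuinely delicate point is the injectivity of $\psi$ in Step~2: because $\Cst_s(P)$ is presented by generators and relations while the full cross-sectional $\Cst$\nb-algebra is a universal completion, no homomorphism \emph{out of} $D_r\rtimes_\gamma G$ is available a priori, and one must manufacture one. Independence enters precisely here --- it makes $\lambda^+$ faithful on the diagonal $D_s$, and the $\Cst$\nb-identity then propagates this to the fibrewise isomorphisms $B'_g\cong B_g$ and to the conditional expectation witnessing that $(B'_g)_{g\in G}$ is a topological grading; these are exactly the ingredients from which the inverse representation is assembled. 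Without independence the fibres $B'_g$ may be strictly larger than $B_g$, and $\Cst_s(P)$ need no longer coincide with the full cross-sectional $\Cst$\nb-algebra of $\Hilm[B]_\gamma$.
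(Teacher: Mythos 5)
Your proposal is correct and follows essentially the same route as the paper: the same grading $B_g=\overline{\mathrm{span}}\{V\in I_l(P)^\times\mid\sigma(V)=g\}$, the same identification $V\mapsto VV^*\delta_g$ of this bundle with $\Hilm[B]_\gamma$ combined with \cite[Proposition~19.8]{Exel:Partial_dynamical} for the reduced statement, and the same construction of an inverse to $\psi$ under independence by assembling the fibrewise inverses $(\lambda^+\restriction_{B'_g})^{-1}$ into a representation of $\Hilm[B]_\gamma$ in $\Cst_s(P)$. Your extra details (the identity $V=\lambda_gV^*V$ for isometry of the bundle map, and checking relation (iii) by pushing into the unit fibre where $\Lambda$ is faithful) are sound elaborations of steps the paper leaves implicit.
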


The following answers affirmatively a question posed by Anantharaman-Delaroche in \cite[Remark~4.17]{anantharamandelaroche2016remarks}.

\begin{cor} Let $P$ be a monoid that embeds into an exact discrete group~$G$. Suppose that the left regular representation $\lambda^+\colon \Cst_s(P)\to\Cst_{\lambda}(P)$ is an isomorphism. Then $\Cst_{\lambda}(P)$ is nuclear. 
\begin{proof} Since the left regular representation is an isomorphism, $P$ satisfies independence by \cite[Corollary~2.27]{Li:Semigroup_amenability}. Nuclearity of $\Cst_{\lambda}(P)$ then follows as a combination of Theorems~\ref{thm:partial-picture} and~\ref{thm:maintheorem}.
\end{proof}
\end{cor}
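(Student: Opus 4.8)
The plan is to reduce the statement to Theorem~\ref{thm:maintheorem} by realising both the full and reduced semigroup $\Cst$\nb-algebras of $P$ as partial crossed products over $G$ via the partial action picture of Theorem~\ref{thm:partial-picture}. The first step is to recover independence of $P$: if $\lambda^+\colon\Cst_s(P)\to\Cst_{\lambda}(P)$ is an isomorphism, then in particular its restriction to the diagonal subalgebra $D_s\subseteq\Cst_s(P)$ is injective; since $\lambda^+(D_s)=D_r$, this restriction is an isomorphism $D_s\cong D_r$, and by \cite[Corollary~2.27]{Li:Semigroup_amenability} the family~$\Hilm[J]$ of constructible right ideals of $P$ is independent, i.e.\ $P$ satisfies independence.

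With independence available, I would invoke Theorem~\ref{thm:partial-picture}: it supplies the partial action $\gamma=(\{A_g\}_{g\in G},\{\gamma_g\}_{g\in G})$ of $G$ on $D_r$ together with isomorphisms $\Cst_s(P)\cong D_r\rtimes_{\gamma}G$ and $\Cst_{\lambda}(P)\cong D_r\rtimes_{\gamma,\red}G$ sending $v_p$, respectively $V_p$, to $\mathbb{1}_{pP}\delta_p$. The algebra $D_r$ is commutative, being generated inside $\ell^\infty(P)$ by the mutually commuting projections $\mathbb{1}_{K_{F_\alpha}}$. Since $\lambda^+(v_p)=V_p$, the two isomorphisms of Theorem~\ref{thm:partial-picture} agree on the generating isometries with the left regular representation $\Lambda\colon\Cst(\Hilm[B]_\gamma)\to\Cst_\red(\Hilm[B]_\gamma)$, hence intertwine $\lambda^+$ with $\Lambda$ everywhere by continuity and density. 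Consequently the hypothesis that $\lambda^+$ is an isomorphism translates precisely into the weak containment statement $D_r\rtimes_{\gamma}G=D_r\rtimes_{\gamma,\red}G$.

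The final step is then immediate: $G$ is exact and $D_r$ is commutative, so Theorem~\ref{thm:maintheorem} applies to the partial action $\gamma$ and shows that $D_r\rtimes_{\gamma}G$ is nuclear; transporting this along the isomorphism of Theorem~\ref{thm:partial-picture} gives that $\Cst_{\lambda}(P)\cong D_r\rtimes_{\gamma,\red}G$ is nuclear. The only point that calls for a modicum of care is verifying that the isomorphisms of Theorem~\ref{thm:partial-picture} are compatible with the respective left regular representations, so that "$\lambda^+$ is an isomorphism" is genuinely equivalent to the weak containment property for $\gamma$; this, however, is read off directly from the explicit action of those isomorphisms on the generating isometries, so it is not a serious obstacle, and the rest of the argument is a straightforward concatenation of results already established.
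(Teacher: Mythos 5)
Your proposal is correct and follows exactly the paper's route: independence of $P$ is recovered from injectivity of $\lambda^+$ on $D_s$ via \cite[Corollary~2.27]{Li:Semigroup_amenability}, the hypothesis is translated into weak containment for the partial action $\gamma$ on the commutative algebra $D_r$ via Theorem~\ref{thm:partial-picture}, and nuclearity follows from Theorem~\ref{thm:maintheorem} using exactness of $G$. The only difference is that you spell out the compatibility of the isomorphisms with the regular representations, which the paper leaves implicit.
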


\begin{rem} There are subsemigroups of exact discrete groups that satisfy independence but $\Cst_s(P)\neq\Cst_{\lambda}(P)$. The nonabelian Artin monoid of finite type $A^+_M$ associated to a Coxeter matrix~$M$ embeds into the corresponding Artin group~$A_M$ by the work of Brieskorn and Saito \cite{Brieskorn1972}. Exactness for Artin groups of finite type follows from~\cite{Cohen2002}. That $\Cst_s(A_M^+)\neq \Cst_\lambda(A_M^+)$ is \cite[Theorem~30]{crisp_laca_2002}.
\end{rem}

\subsection{Separated graph \texorpdfstring{$\mathrm{C}^*$}{C*}-algebras}

Let $(E,C)$ be a finitely separated graph with source and range maps denoted by $s,r\colon E^1\to E^0$. We do not need to define all the notations involved here, but just recall that $(E,s,r)$ is a directed graph and $C$ is a separation on the edges, that is, a collection of partitions $C_v$ of $s^{-1}(v)$ for each vertex $v\in E^0$. To a separated graph we can attach some $\Cst$\nb-algebras, most notably the tame $\Cst$\nb-algebra $\OO(E,C)$ and its reduced version $\OO_\red(E,C)$, which is a quotient of $\OO(E,C)$. We refer to \cite{Ara-Goodearl:C-algebras_separated_graphs, Ara-Exel-Katsura:Dynamical_systems} for further details. Using the description of these algebras as partial crossed products as obtained in \cite{Lolk:Exchange} we get from our main result the following:

\begin{cor}\label{cor:separated-graph}
The quotient map $\OO(E,C)\to \OO_\red(E,C)$ is an isomorphism if and only if one (and hence both) of these $\Cst$\nb-algebras is nuclear.
\end{cor}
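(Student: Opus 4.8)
The plan is to reduce Corollary~\ref{cor:separated-graph} to a statement about partial crossed products and then apply Theorem~\ref{thm:maintheorem}. First I would recall from \cite{Lolk:Exchange} the realisation of the tame and reduced separated graph $\Cst$\nb-algebras as partial crossed products: there is a free group~$\FF$ --- the free group on the edge set~$E^1$ --- and a partial action $\alpha=(\{A_g\}_{g\in\FF},\{\alpha_g\}_{g\in\FF})$ of~$\FF$ on a \emph{commutative} $\Cst$\nb-algebra~$A$, namely the algebra of continuous functions vanishing at infinity on the associated partial dynamical system, together with isomorphisms
\[
\OO(E,C)\cong A\rtimes_{\alpha}\FF,\qquad \OO_\red(E,C)\cong A\rtimes_{\alpha,\red}\FF
\]
under which the canonical quotient map $\OO(E,C)\to\OO_\red(E,C)$ corresponds to the left regular representation $\Lambda\colon\Cst(\Hilm[B]_\alpha)\to\Cst_\red(\Hilm[B]_\alpha)$ of the semidirect product bundle~$\Hilm[B]_\alpha$. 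Since free groups are exact, $\FF$ is exact; together with the commutativity of~$A$ this makes the hypotheses of Theorem~\ref{thm:maintheorem} available as soon as the full and reduced partial crossed products coincide. The finitely separated assumption on $(E,C)$ enters only to have this partial crossed product picture at hand.

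For the ``only if'' implication I would argue as follows. If the quotient map $\OO(E,C)\to\OO_\red(E,C)$ is an isomorphism, then under the above identifications $A\rtimes_{\alpha}\FF=A\rtimes_{\alpha,\red}\FF$, so Theorem~\ref{thm:maintheorem} applies and yields that $A\rtimes_{\alpha}\FF$ --- hence $\OO(E,C)\cong\OO_\red(E,C)$ --- is nuclear; in particular both algebras are nuclear. For the ``if'' implication, suppose one of the two algebras is nuclear. Since $\OO_\red(E,C)$ is a quotient of $\OO(E,C)$ and nuclearity passes to quotients, in either case $\OO_\red(E,C)\cong A\rtimes_{\alpha,\red}\FF$ is nuclear. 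Then the semidirect product bundle $\Hilm[B]_\alpha$ has the approximation property by \cite[Corollary~4.23]{buss2020amenability}, whence $\Cst(\Hilm[B]_\alpha)=\Cst_\red(\Hilm[B]_\alpha)$ by \cite[Theorem~20.6]{Exel:Partial_dynamical} (alternatively one may invoke \cite[Theorem~25.11]{Exel:Partial_dynamical} directly); that is, the quotient map is an isomorphism. Combined with the ``only if'' part, this also shows that both algebras are nuclear, which accounts for the ``and hence both'' in the statement.

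The only genuine input here is the structure result of \cite{Lolk:Exchange} --- the identification of $\OO(E,C)$ and $\OO_\red(E,C)$ with the full and reduced partial crossed products of a commutative $\Cst$\nb-algebra by the free, and therefore exact, group on~$E^1$, compatibly with the quotient map being the regular representation. I expect this to be the only nontrivial step; once it is in place, the corollary follows formally from Theorem~\ref{thm:maintheorem} together with the standard permanence properties of nuclearity under quotients and of the approximation property recalled in Section~\ref{sec:mainsection}.
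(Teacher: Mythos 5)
Your proposal is correct and follows essentially the same route as the paper: identify $\OO(E,C)$ and $\OO_\red(E,C)$ with the full and reduced partial crossed products of a commutative $\Cst$\nb-algebra by the free (hence exact) group on the edges, via \cite{Lolk:Exchange}, and apply Theorem~\ref{thm:maintheorem}. The only difference is that you spell out the easy converse direction (nuclearity of the reduced algebra implies weak containment, via the approximation property or \cite[Theorem~25.11]{Exel:Partial_dynamical}), which the paper leaves implicit, having already noted it in the introduction.
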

\begin{proof}
It is enough to know that $\OO(E,C)$ is a partial crossed product of the form $C_0(X)\rtimes_\alpha G$, where $X$ is a locally compact (totally disconnected) Hausdorff space, $G$ is a free group (on the edges of the graph), and $\alpha$ is a partial action of $G$ on $C_0(X)$. The nature of $X$ is not relevant to us here. The reduced tame \cstar{}algebra $\OO_\red(E,C)$ is then the corresponding reduced crossed product $C_0(X)\rtimes_{\alpha,\red} G$. Since free groups are exact, our main result, Theorem~\ref{thm:maintheorem}, implies the desired result.
\end{proof}

\appendix

\section{Completely positive maps for Fell bundles}\label{sec:full-Stinespring}

The main purpose of this appendix is to establish a universal property for the full cross-sectional $\Cst$\nb-algebra of a Fell bundle with respect to certain completely positive maps, defined in principle only on the fibres. We then obtain an analogue of Proposition~\ref{prop:ccp-reduced} in which the resulting cp map is defined at the level of the full cross-sectional $\Cst$\nb-algebras (see Proposition~\ref{prop:universal}). We refer the reader to \cite{blecher2004operator} for the theory of completely positive maps on operator spaces and operator algebras.

\subsection{Completely positive maps} Let $\Hilm[B]=(B_g)_{g\in G}$ be a Fell bundle over a discrete group. We write $\mathbb{M}_n(\Hilm[B])$ for the set of $n\times n$ matrices with entries in~$\Hilm[B]$. The $(i,j)$ entry of a matrix $M\in\mathbb{M}_n(\Hilm[B])$ will be denoted by~$M_{i,j}$. We let $G^n$ denote the Cartesian product of~$n$ copies of~$G$ and take $\mathfrak{t}=(t_1, \ldots,t_n)\in G^n$. Following~\cite{Abadieferraro}, we set $$\mathbb{M}_{\mathfrak{t}}(\Hilm[B])\coloneqq\{M\in\mathbb{M}_n(\Hilm[B])\mid M_{i,j}\in B_{t_i^{-1}t_j}, i,j\in\{1,\ldots,n\}\}.$$ It follows from \cite[Lemma~2.8]{Abadieferraro} that $\mathbb{M}_{\mathfrak{t}}(\Hilm[B])$ is a $\Cst$\nb-algebra, with entrywise vector space operations, matrix multiplication, and involution given by $$M^*_{i,j}=(M_{j,i})^*, \quad\text{for all }i,j\in\{1,\ldots,n\}.$$ Its $\Cst$\nb-norm is inherited from $\mathbb{M}_n(\Cst(\Hilm[B]))$, or $\mathbb{M}_n(\Cst_{\red}(\Hilm[B]))$, and is equivalent to the supremum norm $M_\infty=\max_{\substack{i,j}}\|M_{i,j}\|$. 

\begin{rem}\label{rem:compactoperators} Let $B$ be a $\Cst$\nb-algebra and~$n$ a positive integer. For each $i\in\{1,\ldots, n\},$ let $b_i\in B$. Then the element $(b_i^*b_j)_{i,j}$ is positive in~$\mathbb{M}_n(B)$ (see \cite{Raeburn-Williams:Morita_equivalence}*{Lemma~2.65}). Now let $\B=(B_g)_{g\in G}$ be a Fell bundle and $\mathfrak{t}=(t_1,\ldots,t_n)\in G^n$. For each $i\in\{1,\ldots, n\}$, take $b_i\in B_{t_i}$. It follows that the matrix $(b_i^*b_j)_{i,j}$ is positive in~$\mathbb{M}_n(\Cst(\B))$, or $\mathbb{M}_n(\Cst_{\red}(\B))$. Hence it is also positive when regarded as an element of~$\mathbb{M}_{\mathfrak{t}}(\Hilm[B])$.
\end{rem}

\begin{defn} Let $\Hilm[H]$ be a Hilbert space and let $\phi\colon \Hilm[B]\to \Bound(\Hilm[H])$ be a function which is linear on each fibre of~$\Hilm[B]$. Let $\mathfrak{t}\in G^n$. We let $\phi^{\mathfrak{t}}$ be the induced linear function \begin{eqnarray*}
\phi^{\mathfrak{t}}\colon \mbb&\to& \mathbb{M}_n(\Bound(\Hilm[H]))\equiv \Bound(\Hilm[H]^n)\\
M=(M_{i,j})&\mapsto& \phi^{\mathfrak{t}}(M)=(\phi(M_{i,j})_{i,j}).
\end{eqnarray*}

We will say that $\phi$ is \emph{completely positive} if $\phi^{\mathfrak{t}}$ is positive for all $\mathfrak{t}\in\cup_{\substack{n=1}}^\infty G^n$. 

If $\phi$ is completely positive and its \emph{norm} 
	$$\|\phi\|\coloneqq\sup\{\|\phi(b)\|: b\in \Hilm[B], \|b\|\leq 1\}$$
is less than or equal to $1$, we will say that $\phi$ is contractive and completely positive. We will use cp and ccp to abbreviate the terms ``completely positive'' and ``contractive and completely positive'', respectively.
\end{defn}

Note that for $\Cst$\nb-algebras (i.e. Fell bundles over the trivial group $\{e\}$) our definitions of cp and ccp maps are the usual ones.
Also, the restriction of a cp or ccp map of $\Cst(\Hilm[B])$ to $\Hilm[B]$ is a cp or ccp map of $\Hilm[B]$ respectively. One of our goals is to show that every ccp map of $\Hilm[B]$ arises in this way. In what follows we shall write $\phi_e\coloneqq\phi|_{B_e}$ for the restriction of $\phi$ to $\phi_e$. Notice that $\phi_e$ is a cp (resp. ccp) if so is $\phi$.

\begin{lem}\label{lem:adjoint} Let $\phi\colon\Hilm[B]\to\Bound(\Hilm[H])$ be a cp map. Then $\phi$ is self-adjoint in the sense that $\phi(b^*)=\phi(b)^*$ for all $g\in G$ and $b\in B_g$. Moreover, $\phi(b)^*\phi(b)\leq \|\phi_e\|\phi_e(b^*b)$ and 
		$$\|\phi^{\mathfrak{t}}\|=\|\phi\|=\|\phi_e\|=\sup_i \|\phi_e(u_i)\|$$ 
for all $\mathfrak{t}\in G^n$ and $n\in \mathbb{N}$, where $(u_i)$ is any approximate unit for $B_e$.
\begin{proof} For~$g\in G$, take $\mathfrak{t}=(e,g)\in G^2$. The matrix 
\begin{equation*}
M= 
\begin{bmatrix}
 0 & b \\
 b^* & 0 \\
\end{bmatrix}
\end{equation*} is self-adjoint in~$\mbb$. Since $\phi^{\mathfrak{t}}$ is positive, it follows that $\phi^{\mathfrak{t}}(M)^*=\phi^{\mathfrak{t}}(M)$. Thus $\phi(b)^*=\phi(b^*)$. The inequality  $\phi(b)^*\phi(b)\leq \|\phi_e\|\phi_e(b^*b)$ follows as in the proof of \cite{Lance:Hilbert_modules}*{Lemma~5.3} using the positivity of the matrix
\begin{equation*}
\begin{bmatrix}
u_i^2 & u_ib \\
b^*u_i & b^*b \\
\end{bmatrix}=\begin{bmatrix}
u_i & b \\
0 & 0 \\
\end{bmatrix}^*\begin{bmatrix}
u_i & b \\
0 & 0 \\
\end{bmatrix}\in \mbb.
\end{equation*}
From this it follows that $\|\phi\|=\|\phi_e\|$. And the norm equality $\|\phi_e\|=\sup_i \|\phi_e(u_i)\|$ is  \cite{Lance:Hilbert_modules}*{Lemma~5.3(i)} applied to the cp map $\phi_e$.

Finally, if $\phi$ is cp, then so is $\phi^{\mathfrak{t}}$ for every $\mathfrak{t}\in G^n$, $n\in\mathbb N$; this follows from the canonical identification $\mathbb{M}_k(\mbb)=\mathbb{M}_{t^{(k)}}(\Hilm[B])$, where $t^{(k)}=(t,t,\ldots,t)\in G^{kn}$.
Given $a\in B_e$ we write $a^\mathfrak{t}$ for the diagonal matrix in $\mbb$ with constant value $a$ in the diagonal.
Then $B_e\to \mbb, a\mapsto a^\mathfrak{t},$ is an injective \Star homomorphism and, consequently, $\|a\|=\|a^\mathfrak{t}\|$. Moreover, this embedding is nondegenerate, that is, $(u^\mathfrak{t}_i)$ is an approximate unit of $\mbb$ whenever $(u_i)$ is an approximate unit for $B_e$. Therefore, applying again \cite{Lance:Hilbert_modules}*{Lemma~5.3(i)}, now for the cp map $\phi^{\mathfrak{t}}$, we conclude that 
$$\|\phi^{\mathfrak{t}}\|=\sup_i \|\phi^{\mathfrak{t}}(u_i^{\mathfrak{t}})\|=\sup_i\|\phi_e(u_i)^{\mathfrak{t}}\|=\sup_i\|\phi_e(u_i)\|=\|\phi\|.$$
\end{proof}
\end{lem}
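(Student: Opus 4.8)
The plan is to reduce the statement to standard facts about completely positive maps between $\Cst$\nb-algebras by passing to the matrix algebras $\mathbb{M}_{\mathfrak{t}}(\Hilm[B])$, together with one $2\times2$ Cauchy--Schwarz estimate of the type used in \cite{Lance:Hilbert_modules}*{Lemma~5.3}. The first step I would take is to note that $\phi$ is automatically bounded on each fibre: for $g\in G$ set $\mathfrak{t}=(e,g)\in G^2$, so that $\phi^{\mathfrak{t}}\colon\mathbb{M}_{\mathfrak{t}}(\Hilm[B])\to\mathbb{M}_2(\Bound(\Hilm[H]))$ is a positive linear map between $\Cst$\nb-algebras and hence bounded; since the matrix whose only nonzero entry is $b$ in the $(1,2)$-corner has norm $\|b\|$ in $\mathbb{M}_{\mathfrak{t}}(\Hilm[B])$, this already gives $\|\phi(b)\|\le\|\phi^{\mathfrak{t}}\|\,\|b\|$. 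In particular $\phi_e$ is bounded and $\phi$ is norm-continuous on every fibre, which is precisely what will let me pass to limits along an approximate unit $(u_i)$ of $B_e$.

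Self-adjointness then comes for free: the matrix $\left(\begin{smallmatrix}0&b\\ b^*&0\end{smallmatrix}\right)$ is self-adjoint in $\mathbb{M}_{(e,g)}(\Hilm[B])$, so its image under the positive (hence \Star preserving) map $\phi^{(e,g)}$ is self-adjoint, and comparing $(1,2)$-entries forces $\phi(b^*)=\phi(b)^*$. For the Schwarz-type inequality I would apply $\phi^{(e,g)}$ to the positive element $\left(\begin{smallmatrix}u_i&b\\0&0\end{smallmatrix}\right)^*\left(\begin{smallmatrix}u_i&b\\0&0\end{smallmatrix}\right)=\left(\begin{smallmatrix}u_i^2&u_ib\\ b^*u_i&b^*b\end{smallmatrix}\right)$ of $\mathbb{M}_{(e,g)}(\Hilm[B])$ (positivity is Remark~\ref{rem:compactoperators}), obtaining a positive operator matrix $\left(\begin{smallmatrix}\phi_e(u_i^2)&\phi(u_ib)\\ \phi(b^*u_i)&\phi_e(b^*b)\end{smallmatrix}\right)$; since positivity of a $2\times2$ operator matrix $\left(\begin{smallmatrix}A&C\\ C^*&D\end{smallmatrix}\right)$ forces $C^*C\le\|A\|\,D$, and $\phi(b^*u_i)=\phi(u_ib)^*$, this yields $\phi(u_ib)^*\phi(u_ib)\le\|\phi_e(u_i^2)\|\,\phi_e(b^*b)\le\|\phi_e\|\,\phi_e(b^*b)$. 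Letting $i\to\infty$, which is legitimate because $u_ib\to b$ and $\phi$ is continuous on $B_g$, gives $\phi(b)^*\phi(b)\le\|\phi_e\|\,\phi_e(b^*b)$.

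Taking norms in the last inequality gives $\|\phi(b)\|^2\le\|\phi_e\|\,\|\phi_e(b^*b)\|\le\|\phi_e\|^2\|b\|^2$, hence $\|\phi\|\le\|\phi_e\|$; the reverse inequality is trivial, so $\|\phi\|=\|\phi_e\|$, while $\|\phi_e\|=\sup_i\|\phi_e(u_i)\|$ is \cite{Lance:Hilbert_modules}*{Lemma~5.3(i)} for the $\Cst$\nb-algebra cp map $\phi_e$. Finally, for $\mathfrak{t}\in G^n$ I would observe that $\phi^{\mathfrak{t}}$ is itself completely positive --- using the identification $\mathbb{M}_k(\mathbb{M}_{\mathfrak{t}}(\Hilm[B]))\cong\mathbb{M}_{\mathfrak{t}^{(k)}}(\Hilm[B])$ --- and that $a\mapsto a^{\mathfrak{t}}$, the diagonal matrix with constant entry $a$, is a nondegenerate \Star embedding $B_e\hookrightarrow\mathbb{M}_{\mathfrak{t}}(\Hilm[B])$, so that $(u_i^{\mathfrak{t}})$ is an approximate unit there; applying \cite{Lance:Hilbert_modules}*{Lemma~5.3(i)} once more, now to $\phi^{\mathfrak{t}}$, gives $\|\phi^{\mathfrak{t}}\|=\sup_i\|\phi^{\mathfrak{t}}(u_i^{\mathfrak{t}})\|=\sup_i\|\phi_e(u_i)\|=\|\phi\|$. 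The one point needing care is the passage to the limit along $(u_i)$ in the Schwarz inequality --- a linear map can be bounded on the dense subset $B_eB_g$ without being bounded on all of $B_g$ --- which is exactly why I would extract fibrewise boundedness of $\phi$ from positivity of the $\phi^{\mathfrak{t}}$ at the very start; the remainder is the argument of \cite{Lance:Hilbert_modules}*{Lemma~5.3} carried out inside the $\Cst$\nb-algebra $\mathbb{M}_{\mathfrak{t}}(\Hilm[B])$.
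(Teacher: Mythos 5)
Your proof is correct and follows essentially the same route as the paper: the same $2\times 2$ matrices in $\mathbb{M}_{(e,g)}(\Hilm[B])$ for self-adjointness and the Schwarz-type inequality, and the same application of \cite{Lance:Hilbert_modules}*{Lemma~5.3(i)} to $\phi_e$ and to $\phi^{\mathfrak{t}}$ via the nondegenerate diagonal embedding of $B_e$ into $\mathbb{M}_{\mathfrak{t}}(\Hilm[B])$. The only difference is that you make explicit the fibrewise boundedness of $\phi$ (obtained from positivity of $\phi^{(e,g)}$ and needed to pass to the limit along the approximate unit), a point the paper leaves implicit in its reference to Lance's proof; this is a useful clarification rather than a different argument.
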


\subsection{Stinespring Dilation Theorem for Fell bundles}

We now prove the main theorem of this appendix, which is an extension of Stinespring's Dilation Theorem to Fell bundles.
In fact we will follow Stinespring's original ideas~\cite{Stinespring} in our proof.

\begin{thm}[Stinespring Dilation Theorem for Fell bundles]\label{thm:Stinespring} Let $\Hilm[B]=(B_g)_{g\in G}$ be a Fell bundle over a discrete group and let $\phi\colon \Hilm[B]\to\Bound(\Hilm[H])$ be a cp map. Then there exist a Hilbert space~$\Hilm[K]$, a nondegenerate representation $\pi=\{\pi_g\}_{g\in G}$ of~$\Hilm[B]$ on~$\Hilm[K]$ and a bounded operator $V\colon \Hilm[K]\to\Hilm[H]$ such that $$\phi(b)=V\pi_g(b)V^*$$
for all $g\in G$ and $b\in B_g$.
Moreover, $VV^*$ is the limit of $(\phi(u_i))$ with respect to the strong operator topology of~$\Bound(\Hilm[H])$, where $(u_{i})$ is an approximate identity for~$B_e$.
\begin{proof} Let $\oplus_{\substack{g\in G}}B_g$ be the (algebraic) direct sum of the fibers of $\Hilm[B],$ or equivalently, the set of sections of $\Hilm[B]$ with finite support.
We will write $a_g$ instead of $a(g)$ for all $a\in \oplus_{\substack{g\in G}}B_g.$

We let
$$\Phi\colon((\oplus_{\substack{g\in G}}B_g)\odot\Hilm[H])\times ((\oplus_{\substack{g\in G}}B_g)\odot\Hilm[H])\to\CC$$
be the function given on elementary tensors by $\Phi(a\odot \xi,b\odot \eta)= \sum_{g,h\in G}\braket{\phi(b_g^*a_h)\xi}{\eta}.$
Note that $\Phi$ is linear in the first variable and satisfies $\Phi(x,y)=\overline{\Phi(y,x)}$ (by Lemma \ref{lem:adjoint}). Thus it will be a pre-inner product once we prove that $\Phi(x,x)\geq 0$ for all $x\in (\oplus_{\substack{g\in G}}B_g)\odot\Hilm[H].$

Take $x=\sum_{k=1}^m b^{(k)}\odot \xi_k\in (\oplus_{\substack{g\in G}}B_g)\odot\Hilm[H].$
Let $F=\{t_1,\ldots, t_n\}$ be a finite subset of~$G$ containing the support of~$b_k$ for every~$k\in\{1,\ldots,m\}.$
Set 
$$\mathfrak{t}=\underbrace{(t_1,\ldots, t_n, t_1,\ldots t_n,\ldots, t_1,\ldots,t_n)}_{m \text{ times }}\in G^{nm}$$
and define the element $M\in \mbb$ by
 $$M\coloneqq  \left[\begin{array}{ccccccc}
        (b^{(1)}_{t_1})^*b^{(1)}_{t_1} & \cdots & (b^{(1)}_{t_1})^*b^{(1)}_{t_n} & (b^{(1)}_{t_1})^*b^{(2)}_{t_1} & \cdots & (b^{(1)}_{t_1})^*b^{(k)}_{t_n}\\
        \vdots &    & \vdots   &    \vdots  &  & \vdots \\
        (b^{(1)}_{t_n})^*b^{(1)}_{t_1} & \cdots & (b^{(1)}_{t_n})^*b^{(1)}_{t_n} & (b^{(1)}_{t_n})^*b^{(2)}_{t_1}& \cdots & (b^{(1)}_{t_n})^*b^{(k)}_{t_n})\\
        \vdots & & \vdots & \vdots & & \vdots\\
       (b^{(m)}_{t_1})^*b^{(1)}_{t_1} & \cdots & (b^{(m)}_{t_1})^*b^{(1)}_{t_n} & (b^{(k)}_{t_1})^*b^{(2)}_{t_1}& \cdots &  (b^{(m)}_{t_1})^* b^{(m)}_{t_n}\\
        \vdots &    & \vdots    & \vdots  &  & \vdots \\
       (b^{(m)}_{t_n})^*b^{(1)}_{t_1} & \cdots & (b^{(m)}_{t_n})^*b^{(1)}_{t_n} & (b^{(m)}_{t_n})^*b^{(2)}_{t_1} & \cdots &(b^{(m)}_{t_n})^*b^{(m)}_{t_n}\\
       \end{array}
\right]. $$

Consider the vector~$\xi\in \Hilm[H]^{nm}$ defined by  
$$\xi\coloneqq  (\underbrace{\xi_1,\ldots,\xi_1}_{n \text{ times}},\underbrace{\xi_2,\ldots,\xi_2}_{n \text{ times}}, \ldots, \underbrace{\xi_m,\ldots,\xi_m}_{n \text{ times}} )\in \Hilm[H]^{nm} $$
and note that $\Phi(x,x)=\braket{\phi^{\mathfrak{t}}(M)\xi}{\xi}.$ Since $M\geq 0$ in $\mbb$ by Remark~\ref{rem:compactoperators}, we deduce that $\Phi(x,x)=\braket{\phi^{\mathfrak{t}}(M)\xi}{\xi}\geq 0$, as desired.

We let $\Hilm[K]$ be the Hilbert space built from $(\oplus_{\substack{g\in G}}B_g)\odot\Hilm[H]$ by taking the completion of the quotient space $\big((\oplus_{\substack{g\in G}}B_g)\odot\Hilm[H]\big)/\Hilm[R]$ with respect to the norm induced by the inner product $(b+\Hilm[R],c+\Hilm[R])\mapsto \Phi(b,c)$, where
$$\Hilm[R]\coloneqq\{b\in (\oplus_{\substack{g\in G}}B_g)\odot\Hilm[H]\mid \Phi(b,b)=0\}.$$
The class $a\odot \xi+\Hilm[R]$ of the elementary tensor $a\odot \xi$ will be denoted by $a\otimes \xi.$

We will now construct a representation $\pi=\{\pi_g\}_{g\in G}$ of~$\Hilm[B]=(B_g)_{g\in G}$ on~$\Hilm[K].$
For $h\in G$, $a\in B_h$ and $b\in \oplus_{\substack{g\in G}}B_g $ we define $\pi^0_h(a)b\in \oplus_{\substack{g\in G}}B_g $ by $(\pi^0_h(a)b)_g= a b_{h^{-1}g}.$
Then $\pi^0\coloneqq(\pi^0_g)_{g\in G}\colon \Hilm[B]\to \operatorname{Lin}(\oplus_{\substack{g\in G}}B_g)$ is fibrewise linear and multiplicative, and so is $\pi^0\odot \operatorname{id}_{\Hilm[H]} \coloneqq(\pi^0_g\odot \operatorname{id}_{\Hilm[H]})_{g\in G}\colon \Hilm[B]\to \operatorname{Lin}((\oplus_{\substack{g\in G}}B_g)\odot \Hilm[H]).$
In addition, 
$$\Phi( (\pi^0\odot \operatorname{id}_{\Hilm[H]})(a) x,y )=\Phi(  x,(\pi^0\odot \operatorname{id}_{\Hilm[H]})(a^*)y )$$
for all $a\in \Hilm[B]$ and $x,y\in (\oplus_{\substack{g\in G}}B_g)\odot \Hilm[H].$
We claim that 
\begin{equation}\label{equ:pi0 odot idH is bounded}
 \Phi( (\pi^0\odot \operatorname{id}_{\Hilm[H]})(a) x,(\pi^0\odot \operatorname{id}_{\Hilm[H]})(a)x )\leq \|a\|^2 \Phi(  x,x ).
\end{equation}

Fix $x\in (\oplus_{\substack{g\in G}}B_g)\odot \Hilm[H].$
Then for all $a\in \Hilm[B]$ we have
$$ \Phi( (\pi^0\odot \operatorname{id}_{\Hilm[H]})(a) x,(\pi^0\odot \operatorname{id}_{\Hilm[H]})(a) x ) = \Phi( (\pi^0\odot \operatorname{id}_{\Hilm[H]})(a^*a) x, x ). $$
Thus the functional 
$$ \varphi\colon B_e\to \CC,\ \varphi(a)= \Phi( (\pi^0\odot \operatorname{id}_{\Hilm[H]})(a) x, x )$$
is (completely) positive. By Lemma~\ref{lem:adjoint}  (see also \cite[Theorems 3.3.1, 3.3.2]{Murphy1990book}), for an approximate unit $(u_i)$ of $B_e$ and $a\in \Hilm[B]$ we have
$$ \Phi( (\pi^0\odot \operatorname{id}_{\Hilm[H]})(a) x,(\pi^0\odot \operatorname{id}_{\Hilm[H]})(a) x )
= \varphi(a^*a)\leq \|a\|^2 \lim_i \varphi(u_i).$$
But note that considering the expression $x=\sum_{k=1}^m b^{(k)}\odot \xi_k$ and using the continuity of $\phi$ we obtain
$$ \lim_i \varphi(u_i)
= \lim_i \sum_{j,k=1}^m \sum_{g,h\in G}\braket{ \phi({b^{(j)}_g}^*u_ib^{(k)}_h )\xi_k}{\xi_j}
= \sum_{j,k=1}^m \sum_{g,h\in G} \braket{\phi({b^{(j)}_g}^* b^{(k)}_h )\xi_k}{\xi_j} = \Phi(x,x).$$
Hence \eqref{equ:pi0 odot idH is bounded} follows.

The construction above implies the existence of a \Star representation $\pi\colon \Hilm[B]\to \Bound(\Hilm[K])$ such that  $\pi(a)(b\otimes \xi)= (\pi^0(a)b)\otimes \xi$ for all $a\in \Hilm[B],$ $b\in \oplus_{\substack{g\in G}}B_g$ and $\xi\in \Hilm[H].$

The bilinear function $(\oplus_{\substack{g\in G}}B_g) \times \Hilm[H] \to \Hilm[H],\ (a,\xi)\mapsto \sum_{g\in H} \phi(a_x)\xi,$ induces a linear map $V_0\colon (\oplus_{\substack{g\in G}}B_g)\odot \Hilm[H]\to \Hilm[H]$ such that $V_0(a\odot \xi)=\sum_{g\in G}\phi(a_g)\xi.$
We claim that $V_0$ is bounded with respect to the seminorm defined by $\Phi.$ Indeed, take $x=\sum_{k=1}^m b^{(k)}\odot \xi_k$ and note that for any approximate unit $(u_i)$ of $B_e\subset \oplus_{\substack{g\in G}}B_g$ we have 
 \begin{equation*}\begin{aligned}
 \|V_0(x)\|^2
 =\braket{V_0(x)}{V_0(x)}
    &=\braket{\sum_{\substack{k=1}}^m\big(\sum_g\phi(b_g^{(k)})\big)\xi_k}{\sum_{\substack{k=1}}^m\big(\sum_g\phi(b_g^{(k)})\big)\xi_k}\\
    &=\lim_{\substack{i}}\braket{\sum_{\substack{k=1}}^m\big(\sum_g\phi({u_i}^* b_g^{(k)})\big)\xi_k}{\sum_{\substack{k=1}}^m\big(\sum_g\phi(b_g^{(k)})\big)\xi_k}\\
    &=\lim_{\substack{i}}\Phi(x,u_i \odot V_0(x))\\
    &\leq \sup_{\substack{i}} \|x\|\| u_i\odot V_0(x)\|\\
    &\leq \sup_{\substack{i}}\|x\| \braket{ \phi(u_i^*u_i )V_0(x)}{V_0(x)}^{1/2}\\
    &\leq \|\phi\|\|x\|\|V_0(x)\|.
\end{aligned}
\end{equation*}
This implies $\|V_0(x)\|\leq \|\phi\|\|x\|$ and, consequently, there exists a unique bounded operator $V\colon \Hilm[K]\to \Hilm[H]$ with $\|V\|\leq \|\phi\|$ such that $V(a\otimes \xi)=\sum_{g\in G}\phi(a_g)\xi.$

To establish the equality $\phi(a)=V\pi_g(a)V^*$, we take an approximate identity $(u_{i})$ for~$B_e$.
Observe that for every $\xi\in \Hilm[H]$ and every $x=\sum_{\substack{k=1}}^mb_k\otimes \xi_k \in \Hilm[K]$, we have
\begin{equation*}
\begin{aligned}
\braket{x}{V^*\xi}
  =\braket{Vx}{\xi}
      & =\sum_{\substack{k=1}}^m \sum_{g\in G} \braket{\phi(b^{(k)}_g)\xi_k}{\xi}
         =\lim_i \sum_{\substack{k=1}}^m \sum_{g\in G} \braket{\phi({u_i}^*b^{(k)}_g)\xi_k}{\xi}\\
      &=\lim_{i}\braket{x}{u_{i}\otimes\xi}.
\end{aligned}
\end{equation*}
Since the net $(u_i\otimes \xi)$ is bounded we obtain that $\braket{x}{V^*\xi}=\lim_{\substack{i}}\braket{x}{u_i\otimes \xi}$ for all $x\in \Hilm[K]$ and $\xi\in \Hilm[H].$
Hence, for every $g\in G$, $a\in B_g$ and $\xi,\eta\in\Hilm[H]$, 
\begin{equation*}\begin{aligned}
\braket{V\pi_g(a)V^*\xi}{\eta}
 & =\braket{\pi_g(a)V^*\xi}{V^*\eta}
   = \lim_i \braket{\pi_g(a)V^*\xi}{u_i \otimes \eta}
   = \lim_i \braket{V^*\xi}{a^*u_i \otimes \eta}\\
 & = \braket{V^*\xi}{a^* \otimes \eta}
   = \lim_i \braket{\phi( {u_i}^* a )\xi}{\eta}
   = \braket{\phi(a)\xi}{\eta}.
\end{aligned}
\end{equation*}
 We conclude that $\phi(a)=V\pi_g(a)V^*$ as desired.
It is clear from the construction that $(\pi_e(u_{i}))$ converges to $1_{\Hilm[K]}$ strongly in~$\Bound(\Hilm[K])$. But this means that $\pi$ is nondegenerate and also implies that $VV^*$ is the limit of~$(\phi(u_{i}))$ in the strong operator topology of~$\Bound(\Hilm[H])$. This finishes the proof of the theorem.
\end{proof}
\end{thm}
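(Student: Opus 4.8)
The plan is to follow Stinespring's original argument~\cite{Stinespring}, working with the algebraic direct sum $\bigoplus_{g\in G}B_g$ of the fibres (equivalently, the finitely supported sections of $\Hilm[B]$) in place of a $\Cst$-algebra, and using the matrix $\Cst$-algebras $\mbb$ to turn complete positivity of $\phi$ into an honest positivity statement. First I would introduce the sesquilinear form $\Phi$ on the algebraic tensor product $(\bigoplus_{g\in G}B_g)\odot\Hilm[H]$ given on elementary tensors by $\Phi(a\odot\xi,b\odot\eta)=\sum_{g,h\in G}\langle\phi(b_g^*a_h)\xi,\eta\rangle$, a finite sum since $a,b$ are finitely supported. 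It is linear in the first variable, and Hermitian symmetry $\Phi(x,y)=\overline{\Phi(y,x)}$ follows once one knows $\phi(c^*)=\phi(c)^*$ on each fibre, which is extracted by applying positivity of $\phi^{\mathfrak{t}}$ for $\mathfrak{t}=(e,g)$ to the self-adjoint element $\left[\begin{smallmatrix}0&c\\c^*&0\end{smallmatrix}\right]$ of $\mathbb{M}_{(e,g)}(\Hilm[B])$ (this is part of Lemma~\ref{lem:adjoint}).

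The crux is to prove $\Phi(x,x)\ge 0$ for every $x=\sum_{k=1}^{m}b^{(k)}\odot\xi_k$. Here I would pick a finite subset $\{t_1,\dots,t_n\}\subseteq G$ containing the supports of all the $b^{(k)}$, form the tuple $\mathfrak{t}\in G^{nm}$ obtained by concatenating $m$ copies of $(t_1,\dots,t_n)$, and assemble the $nm\times nm$ matrix $M$ whose entries are the products $(b^{(j)}_{t_p})^*b^{(k)}_{t_q}$ arranged in the natural block order. By Remark~\ref{rem:compactoperators}, $M$ is positive in $\mbb$, being of the form $(c_i^*c_j)_{i,j}$ with the $c_i$ running over the elements $b^{(k)}_{t_p}$. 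If $\xi\in\Hilm[H]^{nm}$ denotes the correspondingly ordered tuple of the $\xi_k$ (each repeated $n$ times), a direct bookkeeping computation shows $\Phi(x,x)=\langle\phi^{\mathfrak{t}}(M)\xi,\xi\rangle$, which is nonnegative because $\phi^{\mathfrak{t}}$ is positive. Thus $\Phi$ is a pre-inner product, and I would let $\Hilm[K]$ be the Hilbert space completion of $\big((\bigoplus_{g}B_g)\odot\Hilm[H]\big)/\Hilm[R]$, where $\Hilm[R]=\{x:\Phi(x,x)=0\}$; write $a\otimes\xi$ for the image of $a\odot\xi$.

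Next I would construct the representation. For $a\in B_h$ define $\pi^0_h(a)$ on $\bigoplus_g B_g$ by $(\pi^0_h(a)b)_g=ab_{h^{-1}g}$; this is fibrewise linear, multiplicative, and satisfies $\Phi(\pi^0_h(a)x,y)=\Phi(x,\pi^0_{h^{-1}}(a^*)y)$ by a change of variables. The key estimate $\Phi(\pi^0(a)x,\pi^0(a)x)\le\|a\|^2\Phi(x,x)$ is obtained by fixing $x$ and considering the positive linear functional $\varphi\colon B_e\to\CC$, $\varphi(a)=\Phi(\pi^0(a)x,x)$; using the norm estimates of Lemma~\ref{lem:adjoint}, together with the standard bound $\varphi(a^*a)\le\|a\|^2\lim_i\varphi(u_i)$ for a positive functional and an approximate unit $(u_i)$ of $B_e$, one gets the inequality after observing that $\lim_i\varphi(u_i)=\Phi(x,x)$ by continuity of $\phi$. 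Hence $\pi^0\odot\mathrm{id}_{\Hilm[H]}$ descends to a nondegenerate \Star representation $\pi$ of $\Hilm[B]$ on $\Hilm[K]$.

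Finally I would define $V\colon\Hilm[K]\to\Hilm[H]$ as the bounded extension of $a\otimes\xi\mapsto\sum_{g}\phi(a_g)\xi$; boundedness, with $\|V\|\le\|\phi\|$, follows from a Cauchy--Schwarz manipulation in which one inserts an approximate unit of $B_e$. Computing adjoints gives $V^*\xi=\lim_i u_i\otimes\xi$ (strong limit), whence for $a\in B_g$ one has $\langle V\pi_g(a)V^*\xi,\eta\rangle=\langle V^*\xi,a^*\otimes\eta\rangle=\lim_i\langle\phi(u_i^*a)\xi,\eta\rangle=\langle\phi(a)\xi,\eta\rangle$, i.e.\ $\phi(a)=V\pi_g(a)V^*$. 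Nondegeneracy of $\pi$ is immediate from $\pi_e(u_i)\to 1_{\Hilm[K]}$ strongly, and that same convergence forces $VV^*=\lim_i\phi(u_i)$ in the strong operator topology of $\Bound(\Hilm[H])$. I expect the genuinely delicate point to be the positivity of $\Phi$: one must arrange the indexing so that $\Phi(x,x)$ is literally $\langle\phi^{\mathfrak{t}}(M)\xi,\xi\rangle$ with $M$ manifestly of the form $(c_i^*c_j)_{i,j}$ sitting inside the correct $\mbb$ — this is precisely where complete positivity of $\phi$, as opposed to mere positivity on fibres, is used; the remaining steps are faithful transcriptions of Stinespring's classical argument.
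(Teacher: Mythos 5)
Your proposal is correct and follows essentially the same route as the paper's proof: the pre-inner product $\Phi$, positivity via the block matrix $M$ in $\mathbb{M}_{\mathfrak{t}}(\mathcal B)$ and Remark~\ref{rem:compactoperators}, the representation $\pi^0$ with the boundedness estimate coming from the positive functional $\varphi(a)=\Phi(\pi^0(a)x,x)$ and an approximate unit, and the operator $V$ with $V^*\xi=\lim_i u_i\otimes\xi$. You also correctly identified the positivity of $\Phi$ (i.e.\ the bookkeeping identity $\Phi(x,x)=\braket{\phi^{\mathfrak{t}}(M)\xi}{\xi}$) as the point where complete positivity over the matrix algebras $\mathbb{M}_{\mathfrak{t}}(\mathcal B)$ is genuinely used, which is exactly how the paper's argument is organised.
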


\begin{rem} The theorem above holds for Fell bundles over locally compact groups and cp maps which are continuous in the weak operator topology.
 This is so because the continuity condition is equivalent to the continuity of the representation~$\pi$ we constructed in the proof above. See~\cite{FellDoran} for further details on Fell bundles over locally compact groups.
\end{rem}

An immediate consequence of Theorem~\ref{thm:Stinespring} is that every cp map of a Fell bundle can be integrated to a cp map of its cross-sectional $\Cst$\nb-algebra.

\begin{cor}
Every cp map $\phi$ of a Fell bundle $\Hilm[B]=(B_g)_{g\in G}$ over a discrete group is the restriction of a unique cp map $\tilde{\phi}$ of $C^*(\Hilm[B])$ with $\|\phi\|=\|\tilde\phi\|$.
\end{cor}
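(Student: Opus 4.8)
The plan is to integrate the Stinespring dilation of $\phi$ furnished by Theorem~\ref{thm:Stinespring}. First I would apply that theorem to obtain a Hilbert space $\Hilm[K]$, a nondegenerate representation $\pi=\{\pi_g\}_{g\in G}$ of~$\Hilm[B]$ on~$\Hilm[K]$, and a bounded operator $V\colon\Hilm[K]\to\Hilm[H]$ such that $\phi(b)=V\pi_g(b)V^*$ for all $g\in G$ and $b\in B_g$. By the universal property of the full cross-sectional $\Cst$\nb-algebra \cite[Definition~16.25]{Exel:Partial_dynamical}, the representation $\pi$ integrates to a (nondegenerate) \Star homomorphism $\tilde{\pi}\colon\Cst(\Hilm[B])\to\Bound(\Hilm[K])$ whose restriction to~$\Hilm[B]$ is~$\pi$. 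I would then set
\[
\tilde{\phi}\colon\Cst(\Hilm[B])\to\Bound(\Hilm[H]),\qquad \tilde{\phi}(x)=V\tilde{\pi}(x)V^*.
\]
Since $\tilde{\phi}$ is the composition of the \Star homomorphism~$\tilde{\pi}$ with the completely positive map $T\mapsto VTV^*$ on~$\Bound(\Hilm[K])$, it is completely positive, and by construction $\tilde{\phi}|_{\Hilm[B]}=\phi$.

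For the norm equality, note that $\Hilm[B]\subseteq\Cst(\Hilm[B])$ immediately gives $\|\phi\|\leq\|\tilde{\phi}\|$. For the reverse inequality I would fix an approximate unit $(u_i)$ for~$B_e$ and invoke the last assertion of Theorem~\ref{thm:Stinespring}: $VV^*$ is the strong operator limit of the net $(\phi(u_i))$, hence $\|VV^*\|\leq\sup_i\|\phi(u_i)\|=\|\phi_e\|=\|\phi\|$ by Lemma~\ref{lem:adjoint}. Since $\tilde{\pi}$ is contractive, it follows that for every $x\in\Cst(\Hilm[B])$,
\[
\|\tilde{\phi}(x)\|=\|V\tilde{\pi}(x)V^*\|\leq\|V\|^2\|x\|=\|VV^*\|\,\|x\|\leq\|\phi\|\,\|x\|,
\]
so $\|\tilde{\phi}\|\leq\|\phi\|$ and the two norms agree.

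Finally, for uniqueness I would use that $\bigoplus_{g\in G}B_g$ is dense in~$\Cst(\Hilm[B])$: any completely positive map $\psi\colon\Cst(\Hilm[B])\to\Bound(\Hilm[H])$ with $\psi|_{\Hilm[B]}=\phi$ agrees with~$\tilde{\phi}$ on this dense subspace, and since completely positive maps between $\Cst$\nb-algebras are bounded, $\psi=\tilde{\phi}$. I do not expect a serious obstacle here once Theorem~\ref{thm:Stinespring} is available; the only point requiring a little care is the norm bookkeeping, namely deducing $\|\tilde{\phi}\|=\|\phi\|$ from the strong convergence $\phi(u_i)\to VV^*$ together with the norm identities of Lemma~\ref{lem:adjoint}.
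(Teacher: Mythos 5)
Your proposal is correct and follows essentially the same route as the paper: dilate $\phi$ via Theorem~\ref{thm:Stinespring}, integrate the representation $\pi$ to $\tilde{\pi}$ on $\Cst(\Hilm[B])$, set $\tilde{\phi}=V\tilde{\pi}(\cdot)V^*$, and get uniqueness from density of $\bigoplus_{g\in G}B_g$. Your norm bookkeeping (lower semicontinuity of the norm under strong convergence of $\phi(u_i)\to VV^*$ combined with Lemma~\ref{lem:adjoint}) is a valid way to make explicit the equality $\|\tilde\phi\|=\|\phi\|$, which the paper's proof leaves implicit.
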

We call $\tilde\phi$ the integrated form of $\phi$.
\begin{proof}Let  $\phi\colon \Hilm[B]\to\Bound(\Hilm[H])$ be a cp map and consider the Hilbert space $\Hilm[K],$ the linear contraction $V\colon \Hilm[K]\to\Hilm[H]$ and the representation $\pi=\{\pi_g\}_{g\in G}\colon \Hilm[B]\to \Bound(\Hilm[K])$ given by Theorem~\ref{thm:Stinespring}. Let $\tilde{\pi}\colon \Cst(\Hilm[B])\to\Bound(\Hilm[K])$ be the \Star homomorphism obtained by the universal property of~$\Cst(\Hilm[B])$. We define a cp map  
\begin{equation*}
\begin{aligned}
\tilde{\phi}\colon  \Cst(\Hilm[B])&\to\Bound(\Hilm[H])\\ 
b&\mapsto V\tilde{\pi}(b)V^*.
\end{aligned}
\end{equation*}
Since $\tilde{\pi}(a)=\pi_g(a)$ for all~$g\in G$ and $a\in B_g$, we also have $\tilde{\phi}(a)=\phi(a)$. Uniqueness follows because $\bigoplus_{\substack{g\in G}}B_g$ is dense in~$\Cst(\Hilm[B])$.
\end{proof}

The next proposition follows as a combination of Theorem~\ref{thm:Stinespring} and the previous corollary. 

\begin{prop}\label{prop:universal} Let $\Hilm[B]=(B_g)_{g\in G}$ and $\Hilm[C]=(C_g)_{g\in G}$ be Fell bundles. Let $\pi=\{\pi_g\}_{g\in G}$ be a faithful representation of~$\Hilm[C]$ on a Hilbert space~$\Hilm[H]$. Suppose that $\phi\colon\Hilm[B]\to\Bound(\Hilm[H])$ is a cp map such that $\phi(B_g)\subseteq \pi_g(C_g)$ for all~$g\in G$. Then there is a cp map $\phi'\colon \Cst(\Hilm[B])\to\Cst(\Hilm[C])$ with $\|\phi'\|=\|\phi\|$ such that $\pi\circ \phi'|_{\Hilm[B]} = \phi.$
\begin{proof} We may regard $\Cst(\Hilm[C])$ as a $\Cst$\nb-algebra of $\Bound(\Hilm[K]),$ for some Hilbert space $\Hilm[K].$
In this way, given $\mathfrak{t}=(t_1,\ldots, t_n)\in G^n$ we view $\mathbb{M}_{\mathfrak{t}}(\Hilm[C])\subset \mathbb{M}_n(\Cst(\Hilm[C]))$ as a $\Cst$\nb-subalgebra of $\Bound(\Hilm[K]^n).$
Thus there exists a unique fibrewise linear map $\kappa\colon \Hilm[B]\to \Bound(\Hilm[K])$ such that for all $g\in G$ and $a\in B_g,$ $\kappa(a)=\pi_g^{-1}(\phi(a)).$
By construction we get $\kappa(\Hilm[B])\subset \Cst(\Hilm[C])$ and $\pi\circ\kappa=\phi,$ implying that   $\pi^\mathfrak{t}\circ {\kappa}^\mathfrak{t} = \phi^\mathfrak{t}$ for every $\mathfrak{t}\in G^n$ and $n\in\NN$.

Note $\pi^\mathfrak{t}\colon \mathbb{M}_{\mathfrak{t}}(\Hilm[C]) \to \Bound(\Hilm[H])$ is an injective \Star homomorphism, thus it is isometric and a matrix $M\in \mbb$ is positive if and only if $\pi^\mathfrak{t}(M)\geq 0.$
Hence, ${\kappa}^\mathfrak{t}$ is contractive and positive if and only if $\pi^\mathfrak{t}\circ {\kappa}^\mathfrak{t}=\phi^\mathfrak{t}$ is so.
Since this last assertion holds by assumption, we deduce that $\kappa$ is cp and $\|\kappa\|=\|\phi\|$.

We know $\kappa\colon \Hilm[B]\to \Bound(\Hilm[K])$ has a unique cp extension with same norm $\phi'\coloneqq\tilde{\kappa}\colon \Cst(\Hilm[B])\to \Bound(\Hilm[K]),$ whose range is contained in $\overline{\kappa(\Hilm[B])}\subset \Cst(\Hilm[C]).$
Thus we may view $\phi'$ as a ccp map from $\Cst(\Hilm[B])$ to $\Cst(\Hilm[C])$ and by construction we get $\phi'|_{B_g}=\kappa|_{B_g}=\pi_g^{-1}\circ \phi|_{B_g}.$
\end{proof}
\end{prop}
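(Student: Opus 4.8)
The plan is to build $\phi'$ in two stages: first produce a completely positive map of the Fell bundle $\Hilm[B]$ into a faithful realisation of $\Cst(\Hilm[C])$, and then integrate it to $\Cst(\Hilm[B])$ using the Stinespring Dilation Theorem for Fell bundles (Theorem~\ref{thm:Stinespring}) and its corollary. First I would fix a faithful representation of $\Cst(\Hilm[C])$ on some Hilbert space $\Hilm[K]$, so that for every $\mathfrak{t}=(t_1,\ldots,t_n)\in G^n$ the $\Cst$-algebra $\mathbb{M}_{\mathfrak{t}}(\Hilm[C])$ sits inside $\mathbb{M}_n(\Cst(\Hilm[C]))\subseteq\Bound(\Hilm[K]^n)$. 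Since $\pi$ is a faithful representation of $\Hilm[C]$, each $\pi_g\colon C_g\to\Bound(\Hilm[H])$ is isometric, hence injective, so that the hypothesis $\phi(B_g)\subseteq\pi_g(C_g)$ lets me define a fibrewise-linear map $\kappa\colon\Hilm[B]\to\Cst(\Hilm[C])\subseteq\Bound(\Hilm[K])$ by $\kappa(b)\coloneqq\pi_g^{-1}(\phi(b))$ for $b\in B_g$. By construction $\pi\circ\kappa=\phi$, and since the $\pi_g$ are isometric, $\|\kappa\|=\|\phi\|$.

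The core of the argument is to check that $\kappa$ is completely positive, i.e. that $\kappa^{\mathfrak{t}}$ is positive for every $\mathfrak{t}\in G^n$. Here I would use that $\mathbb{M}_{\mathfrak{t}}(\Hilm[C])$ is a genuine $\Cst$-algebra (\cite[Lemma~2.8]{Abadieferraro}) and that the induced map $\pi^{\mathfrak{t}}\colon\mathbb{M}_{\mathfrak{t}}(\Hilm[C])\to\mathbb{M}_n(\Bound(\Hilm[H]))$ is an \emph{injective} \Star homomorphism: it is multiplicative and intertwines the involutions because $\pi$ is a representation of $\Hilm[C]$, and it is injective because each $\pi_g$ is. An injective \Star homomorphism between $\Cst$-algebras is isometric and reflects positivity, so from the identity $\pi^{\mathfrak{t}}\circ\kappa^{\mathfrak{t}}=\phi^{\mathfrak{t}}$ together with the positivity of $\phi^{\mathfrak{t}}$ (valid because $\phi$ is cp) one concludes that $\kappa^{\mathfrak{t}}$ is positive; isometry of $\pi^{\mathfrak{t}}$ also gives $\|\kappa^{\mathfrak{t}}\|=\|\phi^{\mathfrak{t}}\|$, in accordance with $\|\kappa\|=\|\phi\|$ (compare Lemma~\ref{lem:adjoint}).

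Having shown that $\kappa$ is a cp map of $\Hilm[B]$ of norm $\|\phi\|$, I would invoke the corollary to Theorem~\ref{thm:Stinespring} to obtain its unique integrated form $\phi'\coloneqq\tilde{\kappa}\colon\Cst(\Hilm[B])\to\Bound(\Hilm[K])$, again of norm $\|\phi\|$. Since $\phi'$ is continuous and agrees with $\kappa$ on the dense subalgebra $\bigoplus_{g\in G}B_g$, its range lies in the norm closure of $\operatorname{span}\bigcup_{g}\kappa(B_g)\subseteq\Cst(\Hilm[C])$, so $\phi'$ may be regarded as a cp map into $\Cst(\Hilm[C])$. On fibres $\phi'|_{B_g}=\kappa|_{B_g}=\pi_g^{-1}\circ\phi|_{B_g}$, which is precisely the asserted relation $\pi\circ\phi'|_{\Hilm[B]}=\phi$.

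The step I expect to be the main obstacle is the complete positivity of $\kappa$, and in particular the claim that $\pi^{\mathfrak{t}}$ \emph{reflects} positivity — this fails for a general positive map, so one really must use that $\mathbb{M}_{\mathfrak{t}}(\Hilm[C])$ is a $\Cst$-algebra and that $\pi^{\mathfrak{t}}$ is an honest injective \Star homomorphism of $\Cst$-algebras. Everything else — the norm bookkeeping, the passage to the integrated form, and the identification of the range inside $\Cst(\Hilm[C])$ — is routine given Lemma~\ref{lem:adjoint} and the corollary to the Stinespring Dilation Theorem for Fell bundles.
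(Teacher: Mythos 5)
Your proposal is correct and follows essentially the same route as the paper's own proof: define the fibrewise map $\kappa=\pi_g^{-1}\circ\phi$ into a faithful realisation of $\Cst(\Hilm[C])$, use that $\pi^{\mathfrak{t}}$ is an injective \Star homomorphism of the $\Cst$\nb-algebras $\mathbb{M}_{\mathfrak{t}}(\Hilm[C])$ (hence isometric and positivity-reflecting) to deduce that $\kappa$ is cp with $\|\kappa\|=\|\phi\|$, and then integrate via the corollary to the Stinespring Dilation Theorem for Fell bundles, noting the range lands in $\Cst(\Hilm[C])$. You have also correctly identified the only nontrivial point, namely that positivity is reflected because $\pi^{\mathfrak{t}}$ is an injective \Star homomorphism and not merely a positive map.
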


\subsection*{Acknowledgements} A. Buss was supported by CNPq and Capes/Humboldt. C. F. Sehnem was supported by CAPES/PrInt grant No. 88887.370650/2019-00 at Universidade Federal de Santa Catarina and by the Marsden Fund of the Royal Society of New Zealand, grant No. 18-VUW-056.

\begin{bibdiv}

  \begin{biblist}

\bib{Abadie:Tensor}{article}{
  author={Abadie, Fernando},
  title={Tensor products of Fell bundles over discrete groups},
  status={eprint},
  note={\arxiv{funct-an/9712006v1}},
  date={1997},
}

\bib{Abadie}{article}{
author = {Abadie, Fernando},
year = {2000},
month = {08},
pages = {},
title = {Enveloping Actions and Takai Duality for Partial Actions},
volume = {197},
journal = {J. Funct. Anal.},
doi = {10.1016/S0022-1236(02)00032-0}
}

\bib{2019arXiv190703803A}{article}{
 author = {{Abadie}, Fernando},
author={ {Buss}, Alcides},
author={ {Ferraro}, Dami{\'a}n},
        title = {Amenability and approximation properties for partial actions and Fell bundles},
      journal = {Bull. Braz. Math. Soc.},
      year = {2021},
doi={10.1007/s00574-021-00255-8},   
}

\bib{Abadieferraro}{article}{
       author = {{Abadie}, Fernando},
author={ {Ferraro}, Dami{\'a}n},
        title = {Equivalence of Fell bundles over groups},
      journal = {J. Oper. Theory},
volume={81},
pages={276--319},
number={2},
year={2019},     
doi={10.7900/jot.2018feb02.2211},     
}

\bib{Anantharaman-Delaroche1987}{article}{
author={Anantharaman-Delaroche, Claire},
year={ 1987},
title={Syst{\`e}mes dynamiques non commutatifs et moyennabilit{\'e}},
journal={Math. Ann.},
pages={297--315},
volume={ 279},
doi={10.1007/BF01461725}}

\bib{anantharamandelaroche2016remarks}{article}{
    title={Some remarks about the weak containment property for groupoids and semigroups},
    author={Anantharaman-Delaroche, Claire},
    year={2016},
note={\arxiv{1604.01724v5}},
    archivePrefix={arXiv},
    primaryClass={math.OA}
}

\bib{Ara-Exel-Katsura:Dynamical_systems}{article}{
  author={Ara, Pere},
  author={Exel, Ruy},
  author={Katsura, Takeshi},
  title={Dynamical systems of type $(m,n)$ and their $\textup{C}^*$\nobreakdash-algebras},
  journal={Ergodic Theory Dynam. Systems},
  volume={33},
  date={2013},
  number={5},
  pages={1291--1325},
  issn={0143-3857},
  doi={10.1017/S0143385712000405},
 review={\MRref{3103084}{}}
}

\bib{Ara-Goodearl:C-algebras_separated_graphs}{article}{
  author={Ara, Pere},
  author={Goodearl, Ken R.},
  title={$C^*$\nobreakdash-algebras of separated graphs},
  journal={J. Funct. Anal.},
  volume={261},
  date={2011},
  number={9},
  pages={2540--2568},
  issn={0022-1236}, 
  doi={10.1016/j.jfa.2011.07.004},
review={\MRref{2826405}{(2012f:46093)}}
}

\bib{blecher2004operator}{book}{
  title={Operator Algebras and Their Modules: An Operator Space Approach},
  author={Blecher, David P.},
author={Le Merdy,  Christian},
volume={30},
  isbn={9780198526599},
  lccn={2005272320},
  series={London Mathematical Society monographs. New series},
  url={https://books.google.com.br/books?id=oBh0EE0HQLQC},
  year={2004},
  publisher={Clarendon, Oxford University Press},
place={Oxford}
}

\bib{Brieskorn1972}{article}{
author={Brieskorn, Egbert},
author={Saito, Kyoji},
year={1972},
title={Artin-Gruppen und Coxeter-Gruppen},
journal={Invent. Math.},
pages={245--271},
volume={17},
number={4},
doi={10.1007/BF01406235},
review={\MRref{0323910}{48\#2263}}
}

\bib{Brown-Ozawa:Approximations}{book}{
  author={Brown, Nathanial P.},
  author={Ozawa, Narutaka},
  title={$C^*$\nobreakdash-algebras and finite-dimensional approximations},
  series={Graduate Studies in Mathematics},
  volume={88},
  publisher={Amer. Math. Soc.},
  place={Providence, RI},
  date={2008},
  pages={xvi+509},
  isbn={978-0-8218-4381-9},
  isbn={0-8218-4381-8},
  review={\MRref{2391387}{2009h:46101}},
}

\bib{Buss-Echter-Willett}{book}{
 author = {Buss, Alcides},
author= {Echterhoff, Siegfried},
author={Willett, Rufus},
title = {Injectivity, crossed products, and amenable group actions},
year = {2020},
  pages = {105--137},
volume= {749},
series=  {Contemp. Math.},
 booktitle = {{$K$}-theory in algebra, analysis and topology},
review = {\MRref{4087636}{}},
doi={10.1090/conm/749/15069},
publisher = {Amer. Math. Soc., Providence, RI}
}

\bib{buss2020amenability}{article}{
    title={Amenability and weak containment for actions of locally compact groups on $\Cst$-algebras},
    author = {Buss, Alcides},
author= {Echterhoff, Siegfried},
author={Willett, Rufus},
    year={2020},
note={\arxiv{2003.03469v5}},
    archivePrefix={arXiv},
    primaryClass={math.OA}
}

\bib{Cohen2002}{article}{
author={Cohen, Arjeh M.},
author={Wales, David B.}
year={2002},
title={Linearity of Artin groups of finite type},
journal={Isr. J. Math.},
pages={101--123},
volume={131},
doi={10.1007/BF02785852},
review={\MRref{1942303}{(2003j:20062)}}}

\bib{crisp_laca_2002}{article}{title={On the Toeplitz algebras of right-angled and finite-type Artin groups}, volume={72}, 
doi={10.1017/S1446788700003876}, 
review={\MRref{1887134}{2003a:46079}},
number={2}, 
journal={J. Aust. Math. Soc.}, 
publisher={Cambridge University Press}, 
author={Crisp, John},
author={Laca, Marcelo}, 
year={2002}, 
pages={223–246}}

\bib{CLEY}{book}{
    author={Cuntz, Joachim},
author ={Echterhoff, Siegfried},
author={Li, Xin},
author={Yu, Guoliang},
series={Oberwolfach seminars},
title={K-Theory for Group $\Cst$\nb-Algebras and Semigroup $\Cst$\nb-Algebras},
  publisher={Birkh\"{a}user/Springer},
place={Cham},
volume={47},
doi={10.1007/978-3-319-59915-1},
year={2017},
     ISSN = {1661-237X},
pages={x--322}
}

\bib{Exel:Circle_actions}{article}{
  author={Exel, Ruy},
  title={Circle actions on $C^*$\nobreakdash-algebras, partial automorphisms, and a generalized Pimsner--Voiculescu exact sequence},
  journal={J. Funct. Anal.},
  volume={122},
  date={1994},
  number={2},
  pages={361--401},
  issn={0022-1236},
  review={\MRref{1276163}{95g:46122}},
  doi={10.1006/jfan.1994.1073},
}

\bib{Exel:Partial_dynamical}{book}{
  author={Exel, Ruy},
  title={Partial dynamical systems, Fell bundles and applications},
  series={Mathematical Surveys and Monographs}
  volume={224},
  date={2017},
  pages={321},
  isbn={978-1-4704-3785-5},
  isbn={978-1-4704-4236-1},
  publisher={Amer. Math. Soc.},
  place={Providence, RI},
}

\bib{FellDoran}{book}{
    AUTHOR = {Fell, James M. G.},
AUTHOR={Doran, Robert S.},
     TITLE = {Representations of {$^*$}-algebras, locally compact groups,
              and {B}anach {$^*$}-algebraic bundles.},
    SERIES = {Pure and Applied Mathematics},
    VOLUME = {125 and 126},
 PUBLISHER = {Academic Press, Boston, MA},
      YEAR = {1988},
}

\bib{GUENTNER2002411}{article}{
title = {Exactness and the Novikov conjecture},
journal = {Topology},
volume = {41},
number = {2},
pages = {411--418},
year = {2002},
issn = {0040-9383},
doi = {0.1016/S0040-9383(00)00036-7},
url = {http://www.sciencedirect.com/science/article/pii/S0040938300000367},
author = {Guentner, Erik },
author={Kaminker, Jerome },
review={\MRref{1876896}{(2003e:46097a)}}}

\bib{Haagerup}{article}{
author={Haagerup, Uffe },
year={1975},
title={The standard form of von Neumann algebras},
journal={Math. Scand.}
  pages={271--283},
volume={37},
doi={10.7146/math.scand.a-11606},
}

\bib{ABCD}{article}{
  author={an Huef, Astrid},
  author={Nucinkis, Brita},
author={Sehnem, Camila F.},
  author={Yang, Dilian},
journal= {J. Funct. Anal.},
title={Nuclearity for semigroup~$\Cst$-algebras},
year={2021},
number = {2},
       doi = {10.1016/j.jfa.2020.108793},
volume = {280},
  review={\MRref{4159272}{}}
}

\bib{Lance:Hilbert_modules}{book}{
  author={Lance, E. {Ch}ristopher},
  title={Hilbert $C^*$\nobreakdash-modules},
  series={London Mathematical Society Lecture Note Series},
  volume={210},
  publisher={Cambridge University Press},
  place={Cambridge},
  date={1995},
  pages={x+130},
  isbn={0-521-47910-X},
  review={\MRref{1325694}{96k:46100}},
  doi={10.1017/CBO9780511526206},
}

\bib{Li:Semigroup_amenability}{article}{
  author={Li, Xin},
  title={Semigroup $\textup C^*$\nobreakdash-algebras and amenability of semigroups},
  journal={J. Funct. Anal.},
  volume={262},
  date={2012},
  number={10},
  pages={4302--4340},
  issn={0022-1236},
  review={\MRref{2900468}{}},
  doi={10.1016/j.jfa.2012.02.020},
}

\bib{LI2013626}{article}{
title = {Nuclearity of semigroup $\Cst$\nb-algebras and the connection to amenability},
journal = {Adv. Math.},
volume = {244},
pages = {626-- 662},
year = {2013},
issn = {0001-8708},
doi = {10.1016/j.aim.2013.05.016},
url = {http://www.sciencedirect.com/science/article/pii/S0001870813001916},
  author={Li, Xin}}

\bib{LOS}{article}{
author = {Li, Xin},
author={Omland, Tron},
author={Spielberg, Jack},
year = {2021},
volume={381},
numer={3},
journal = {Comm. Math. Phys.},
pages = {1263--1308},
title = {$\Cst$-algebras of right LCM one-relator monoids and Artin-Tits monoids of finite type},
doi = {10.1007/s00220-020-03758-5},
}

\bib{Lolk:Exchange}{article}{
  author={Lolk, Matias},
title={Exchange rings and real rank zero C*-algebras associated with finitely separated graphs},
year={2017},
note={\arxiv{1705.04494v1}}}

\bib{Matsumura}{article}{
author = {Matsumura, Masayoshi},
year = {2012},
month = {04},
pages = {},
title = {A characterization of amenability of group actions on $\Cst$\nb-algebras},
volume = {72},
journal = {J. Oper. Theory},
doi = {10.7900/jot.2012sep07.1958}
}

\bib{Mcclanahan1995KTheoryFP}{article}{
title = {K-Theory for Partial Crossed Products by Discrete Groups},
journal = {J. Funct. Anal.},
volume = {130},
number = {1},
pages = {77--117},
year = {1995},
issn = {0022-1236},
doi = {10.1006/jfan.1995.1064},
review={\MRref{1331978}{(96i:46083)}},
  author={McClanahan, Kevin Paul}
}

\bib{Murphy1990book}{book}{
title={{C}*-algebras and operator theory},
author={Murphy, Gerard J},
volume={288},
year={1990},
publisher={Academic press Boston}
}

\bib{Nica:Wiener--hopf_operators}{article}{
  ISSN = {0379-4024},
 URL = {http://www.jstor.org/stable/24715075},
 author = {Nica, A.},
 journal = {J. Oper. Theory},
 number = {1},
 pages = {17--52},
 publisher = {Theta Foundation},
 title = {$C^*$\nobreakdash-algebras generated by isometries and Wiener--Hopf operators},
 review={\MRref{1241114}{46L35 (47B35 47C10)}},
 volume = {27},
 year = {1992}
}

\bib{Ozawa}{article}{
author = {Ozawa, Narutaka},
year = {2000},
month = {02},
pages = {},
title = {Amenable actions and exactness for discrete groups},
volume = {330},
journal = {C. R. Acad. Sci. Paris Sér. I Math.},
review={\MRref{1763912}{(2001g:22007)}},
doi = {10.1016/S0764-4442(00)00248-2}
}

\bib{Raeburn-Williams:Morita_equivalence}{book}{
  author={Raeburn, Iain},
  author={Williams, Dana P.},
  title={Morita equivalence and continuous-trace $C^*$\nobreakdash-algebras},
  series={Mathematical Surveys and Monographs},
  volume={60},
  publisher={Amer. Math. Soc.},
  place={Providence, RI},
  date={1998},
  pages={xiv+327},
  isbn={0-8218-0860-5},
  review={\MRref{1634408}{2000c:46108}},
  doi={10.1090/surv/060},
}

\bib{Sehnem}{article}{
author = {Sehnem, Camila F.},
year = {2019},
pages = {558--593},
title = {On $\Cst$\nb-algebras associated to product systems},
volume = {277},
number={2},
journal = {J. Funct. Anal.},
doi = {10.1016/j.jfa.2018.10.012},
review={\MRref{3952163}{}}
}

\bib{Stinespring}{article}{
author={Stinespring, W. Forrest},
journal={Proc. Amer. Math. Soc.},
volume={6},
title={Positive functions on $\Cst$\nb-algebras},
year={1955},
pages={211--216},
  review={\MRref{69403 }{46.0X}},
  doi={10.1090/S0002-9939-1955-0069403-4},
}
\end{biblist}
\end{bibdiv}

\end{document}